\documentclass[11pt]{amsart}
\usepackage[utf8]{inputenc}

\usepackage{overpic, amssymb, times, graphicx, tikz-cd,float}
\usepackage[all]{xy}
\usepackage[backref=page]{hyperref}
\usepackage{verbatim}
\usepackage{cleveref}

\hypersetup{
    colorlinks=true,
    linkcolor=blue,
    filecolor=blue,      
    urlcolor=blue,
    citecolor=blue,
}
\urlstyle{same}

\numberwithin{equation}{section}

\DeclareMathOperator{\wind}{wind}
\DeclareMathOperator{\Aut}{Aut}

\DeclareMathOperator{\ind}{ind}

\DeclareMathOperator{\Id}{Id}

\newcommand{\C}{\mathbb{C}}
\newcommand{\R}{\mathbb{R}}
\newcommand{\Z}{\mathbb{Z}}
\newcommand{\N}{\mathbb{N}}
\newcommand{\Q}{\mathbb{Q}}
\newcommand{\disk}{\mathbb{D}}

\newcommand{\be}{\begin{enumerate}}
\newcommand{\ee}{\end{enumerate}}

\newcommand{\Sphere}{\mathbb{S}}


\newtheorem{thm}{Theorem}[section]
\newtheorem{theorem}[thm]{Theorem}

\newtheorem{ex}[thm]{Example}

\newtheorem{proposition}[thm]{Proposition}

\newtheorem{defn}[thm]{Definition}

\newtheorem{lemma}[thm]{Lemma}
\newtheorem{cor}[thm]{Corollary}

\newtheorem{question}[thm]{Question}
\newtheorem{remark}[thm]{Remark}

\def\virdim{{\rm{virdim}}}
\def\SS{{\mathbb{S}}}
\def\rd{{\rm d}}

\def\CP{{\mathbb{CP}}}


\def\cM{{\mathcal M}}
\def\cN{{\mathcal N}}
\def\cO{{\mathcal O}}

\def\std{{\mathrm{std}}}
\def\ima{{\mathrm{im}}}
\def\wind{{\mathrm{wind}}}
\def\Hess{{\mathrm{Hess}}}
\def\Bd{{\mathrm{Bd}}}
\def\LCH{{\mathrm{LCH}}}

\topmargin0in \textheight8.5in \textwidth6.5in \oddsidemargin0in
\evensidemargin0in

\newcommand{\Addresses}{{
		\bigskip
		\footnotesize

	    Zhengyi Zhou, \par\nopagebreak
        \textsc{State Key Laboratory of Mathematical Sciences, Chinese Academy of Sciences;}\par\nopagebreak
	    \textsc{Morningside Center of Mathematics, Chinese Academy of Sciences;}\par\nopagebreak
         \textsc{Academy of Mathematics and Systems Science, Chinese Academy of Sciences, China}\par\nopagebreak
		\textit{E-mail address}: \href{mailto:zhyzhou@amss.ac.cn}{zhyzhou@amss.ac.cn}

}}
\title{Unknottedness of symplectic submanifold fillings}
\author{Zhengyi Zhou}
\date{\today}
\begin{document}
	\maketitle
\begin{abstract}
We show that any symplectic filling of the standard contact submanifold $(\Sphere^{2n-1},\xi_{\std})$ of $(\Sphere^{2n+1},\xi_{\std})$ in $(\disk^{n+1},\omega_{\std})$ is smoothly unknotted if $n\ge 2$. We also give a self-contained proof of the Siefring intersection formula between punctured holomorphic curves and holomorphic hypersurfaces used in the proof using the $L$-simple setup of Bao-Honda. 
\end{abstract}
\section{Introduction}
Rigidity is one of the central themes in symplectic topology. Among all the rigidity phenomena, the rigidity of fillings is one of the most studied rigidity starting from Gromov's seminal work \cite{Gromov}. Filling rigidity results can often be transformed into obstruction results, yielding many interesting applications in symplectic topology \cite{EliashbergT3,Luttinger,Nemirovski}. One of the landmark results in the direction of filling rigidity is the Eliashberg-Floer-McDuff theorem \cite{McDuff91}, asserting that any exact filling of the standard contact sphere of dimension at least $3$ is diffeomorphic to a ball, generalizing Gromov's result in dimension $3$ to general dimensions in the smooth category. Such results have been generalized in various directions in the past 30 years \cite{OV,BGZ,Z23}. The rigidity of the filling is also studied in the relative setting, although most in dimension $3$. For example, Eliashberg-Polterovich \cite{EP}'s result on Lagrangian fillings of the standard Legendrian unknot. In this paper, we consider the relative version of filling rigidity for contact submanifolds. More precisely, we consider the most simple contact submanifold in the most simple contact manifold, namely the standard $(\Sphere^{2k-1},\xi_{\std})$ in $(\Sphere^{2n+1},\xi_{\std})$ for $k\le n$, where the contact submanifold $(\Sphere^{2k-1},\xi_{\std})$ is given by the intersection of $(\Sphere^{2n+1},\xi_{\std})\subset \C^{n+1}$ with complex linear subspaces. We consider symplectic fillings of $(\Sphere^{2k-1},\xi_{\std})$ inside the symplectic ball filling of $(\Sphere^{2n+1},\xi_{\std})$. We say the filling is symplectically unknotted if the completed filling pair is symplectomorphic to $(\C^{n+1},\C^k,\omega_{\std})$ and is smoothly unknotted if the completed filling pair is diffeomorphic to $(\C^{n+1},\C^k)$. When the codimension is at least $4$, as the $h$-principle governs the problem \cite[\S 20.1.1]{zbMATH07792729}, smooth knots in \cite{knots} of high codimension can be turned into symplectic knots (after deleting a point) which are smoothly nontrivial. Therefore, it only makes sense to study the rigidity phenomena in the codimension $2$ case.

In dimension $4$, the strongest rigidity results can be obtained, e.g.\ Gromov \cite{Gromov} showed that any exact filling of $(\Sphere^3,\xi_{\std})$ is the symplectically standard ball up to deformation, Eliashberg-Polterovich \cite{EP} showed that any exact Lagrangian filling of the standard Legendrian unknot has to be the standard Lagrangian disk. In the contact submanifold case considered here, using adjunction, we can argue that any symplectic filling of the standard transverse unknot must be a disk, and then the proof of Gromov's theorem implies that any such filling must be symplectically unknotted.  

In recent years, fundamental phenomena have been discovered concerning the geography of codimension $2$ contact submanifolds due to the work of Casals-Etnyre \cite{zbMATH07184225}, C{\^o}t{\'e}-Fauteux-Chapleau \cite{zbMATH07821993}, and Avdek \cite{avdek2023stabilization}. In the 2024 AIM workshop on higher-dimensional contact topology, Casals raised the following question regarding the filling of codimension $2$ contact submanifolds:
\begin{question}[Casals]
	Are symplectic fillings of the standard $(\Sphere^3,\xi_{\std})$ in $(\Sphere^{5},\xi_{\std})$ symplectically unknotted\footnote{Here is the \href{http://aimpl.org/highdimcontacttop/9/}{link} to the question.}?
\end{question}
The main purpose of this paper is to explain the first and easier step toward the above question, that is, those codimension $2$ symplectic fillings are necessarily smoothly unknotted in any dimension. This already exhibits rigidity in the relative setting, as there are abundant knotted smooth balls of codimension $2$. 

\begin{theorem}\label{thm:main}
    Let $W$ be a symplectic filling of the standard contact submanifold $(\Sphere^{2n-1},\xi_{\std})$ of $(\Sphere^{2n+1},\xi_{\std})$ in $(\disk^{n+1},\omega_{\std})$ for $n\ge 2$, then $(\disk^{n+1},W)$ is smoothly unknotted, where $\disk^{n+1}\subset \C^{n+1}$ is a unit disk.
\end{theorem}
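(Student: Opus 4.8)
The plan is to complete the filling pair, run a relative version of the Eliashberg--Floer--McDuff foliation argument to realize the ambient completion as the total space of the normal bundle of the completed filling, and then conclude from the absolute Eliashberg--Floer--McDuff theorem together with triviality of low-rank bundles over a ball. Write $(\hat{X},\hat{W})$ for the completed pair. Since the ambient filling is the standard ball $(\disk^{n+1},\omega_{\std})\subset\mathbb{C}^{n+1}$, its completion is $(\mathbb{C}^{n+1},\omega_{\std})$, and $\hat{W}$ is a properly embedded symplectic hypersurface that is cylindrical over $(\mathbb{S}^{2n-1},\xi_{\std})$ near infinity. As $\omega_{\std}=d\lambda_{\std}$ is exact on $\mathbb{C}^{n+1}$, the restriction $\lambda_{\std}|_{\hat{W}}$ exhibits $\hat{W}$ as an \emph{exact} filling of $(\mathbb{S}^{2n-1},\xi_{\std})$; since $n\ge 2$ (so $2n-1\ge 3$), the Eliashberg--Floer--McDuff theorem \cite{McDuff91} gives a diffeomorphism $\hat{W}\cong\mathbb{R}^{2n}$. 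Consequently it suffices to show that $(\hat{X},\hat{W})$ is diffeomorphic as a pair to the total space of the smooth rank-two normal bundle $\nu\hat{W}\to\hat{W}$ with $\hat{W}$ the zero section: a rank-two bundle over $\mathbb{R}^{2n}$ is trivial, so $(\nu\hat{W},\hat{W})\cong(\mathbb{R}^{2n}\times\mathbb{R}^2,\mathbb{R}^{2n}\times\{0\})\cong(\mathbb{C}^{n+1},\mathbb{C}^n)$, which is exactly smooth unknottedness.

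To exhibit this bundle structure I would foliate $\hat{X}$ by punctured $J$-holomorphic planes transverse to $\hat{W}$, in the spirit of McDuff's proof of the absolute statement. Choose a contact form $\alpha$ for $\xi_{\std}$ on $\mathbb{S}^{2n+1}$ adapted to $\mathbb{S}^{2n-1}$, so that in a standard tubular neighborhood $\mathbb{S}^{2n-1}\times\mathbb{D}^2$ of $\mathbb{S}^{2n-1}$ the Reeb orbits that link $\mathbb{S}^{2n-1}$ exactly once form, for each small radius, a Morse--Bott family of shortest closed orbits. Then pick a generic $\omega_{\std}$-compatible almost complex structure $J$ on $\hat{X}$, cylindrical at infinity, for which $\hat{W}$ is $J$-holomorphic; this is possible because $\hat{W}$ is symplectic and $J$ may be taken generic away from $\hat{W}$, which the curves below meet in only finitely many points. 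Let $\mathcal{M}$ be the moduli space of finite-energy $J$-holomorphic planes in $\hat{X}$ with one positive puncture asymptotic to a linking orbit and with Siefring intersection number $1$ with the holomorphic hypersurface $\hat{W}$. A Fredholm and asymptotic index computation --- precisely the package made self-contained in this paper --- gives $\dim\mathcal{M}=2n$, matching $\dim\hat{W}$; in the model $(\mathbb{C}^{n+1},\mathbb{C}^n)$ the elements of $\mathcal{M}$ are the $J$-holomorphic representatives of the vertical planes, parametrized by $\mathbb{C}^n=\hat{W}$.

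The substance of the argument is to show that $\mathcal{M}$ is a smooth $2n$-manifold with no boundary degenerations whose planes foliate $\hat{X}$ with $\hat{W}$ as a section. Exactness of $\omega_{\std}$ and of $\omega_{\std}|_{\hat{W}}$ rules out closed $J$-holomorphic spheres in $\hat{X}$ and in the symplectization of $\mathbb{S}^{2n+1}$, so there are no sphere bubbles, and the only possible degeneration is an SFT-type breaking into a holomorphic building. Here the Siefring intersection formula is decisive: the intersection pairing with $\hat{W}$ is a homotopy invariant that is nonnegative on holomorphic configurations (positivity of intersections with a holomorphic hypersurface), and the Siefring formula controls how it distributes over a holomorphic building, so that a broken configuration has exactly one component with intersection $1$ with $\hat{W}$ --- hence meeting $\hat{W}$ transversally in a single point --- and every other component disjoint from $\hat{W}$; those are then excluded by exactness together with an action estimate using the minimality of the linking orbits, so $\mathcal{M}$ is compact. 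The same positivity input shows that each plane in $\mathcal{M}$ meets $\hat{W}$ transversally at exactly one point, and a second Siefring computation, now between two punctured curves, shows that distinct planes in $\mathcal{M}$ are disjoint. Evaluation at the unique intersection point with $\hat{W}$ then defines a proper map $\mathcal{M}\to\hat{W}$ which, near infinity, coincides with the model foliation of the cylindrical end; it is a local diffeomorphism by transversality, hence a diffeomorphism by a connectedness (open-and-closed) argument, and evaluation at the other points shows that the planes sweep out all of $\hat{X}$ disjointly. This realizes $\hat{X}$ as a smooth $\mathbb{R}^2$-bundle over $\hat{W}$ with $\hat{W}$ the zero section, so the first paragraph finishes the proof.

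I expect the main obstacle to be exactly this compactness-and-transversality analysis for $\mathcal{M}$: securing transversality of the planes while keeping $\hat{W}$ pinned as a $J$-holomorphic hypersurface; controlling the asymptotics of the planes near infinity so that they realize the product structure of the cylindrical end over $(\mathbb{S}^{2n-1},\xi_{\std})\subset(\mathbb{S}^{2n+1},\xi_{\std})$; and, above all, using the Siefring intersection formula to exclude degenerate holomorphic buildings and to force both the single transverse intersection of each plane with $\hat{W}$ and the mutual disjointness of the leaves. This is precisely the reason for developing that intersection formula self-containedly in the $L$-simple framework of Bao--Honda.
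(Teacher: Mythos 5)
Your opening reduction is fine, and your use of the curve-vs-hypersurface Siefring pairing to control how planes meet $\widehat{W}$ matches what the paper does. The fatal gap is the step ``a second Siefring computation, now between two punctured curves, shows that distinct planes in $\mathcal{M}$ are disjoint,'' and with it the whole plan of turning the $2n$-dimensional family of planes into a foliation of the completion with $\widehat{W}$ as a section. Positivity of intersections between two curves and Siefring's curve--curve pairing are strictly four-dimensional phenomena: in an ambient manifold of dimension $2n+2\ge 6$ two holomorphic planes generically do not meet at all, isolated intersection points between them carry no homotopy-invariant meaning, and nothing forces distinct leaves to be disjoint (nor forces each leaf to be embedded, which you also need). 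The paper develops --- and can only develop --- the intersection number of a punctured curve with a holomorphic \emph{hypersurface}; this is exactly why higher-dimensional Eliashberg--Floer--McDuff-type arguments produce degree-one evaluation maps rather than foliations. Without disjointness and embeddedness of leaves there is no $\R^2$-bundle structure on $(\widehat{\disk^{n+1}},\widehat{W})$, and your first paragraph has nothing to conclude from.

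The paper's route is genuinely different and you are missing its essential topological input. It uses the planes asymptotic to $\gamma_{p_{\max}}$ only as \emph{probes}: Siefring's curve-vs-hypersurface formula shows these planes stay in the complement of $\widehat{W}$, which combined with the Eliashberg--Floer--McDuff theorem, Alexander duality and a degree-one evaluation argument along loops shows that $\disk^{n+1}\setminus W$ is homotopy equivalent to $\Sphere^1$. Unknottedness then follows from Levine's unknotting theorem \cite{Levine65}, which is where the hard smooth topology ($h$-cobordism-type arguments) is hidden. Your proposal never establishes the homotopy type of the complement and never invokes an unknotting criterion; to avoid one you would have to actually build the product structure, which the holomorphic-curve tools available in dimension at least six do not provide.
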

\begin{remark}
By the celebrated Eliashberg-Floer-McDuff theorem \cite{McDuff91}, $W$ must be diffeomorphic to $\disk^{n}$.  Moreover, we can replace the standard filling $(\disk^{n+1},\omega_{\std})$ in \Cref{thm:main} by any exact filling of $(\Sphere^{2n+1},\xi_{\std})$, the arguments go through without any change. 
\end{remark}
In general, we can establish the following relative filling rigidity for contact manifold pairs.
\begin{theorem}\label{thm:V}
    Let $V$ be a Liouville domain and $U\subset V$ a codimension $2$ symplectic submanifold. Then for any symplectic filling $W$ of $\partial(U\times \disk)\subset \partial (V\times \disk)$ in $V\times \disk$, we have $\pi_1(V \times \{(0,1)\} \backslash U \times \{(0,1)\})\to \pi_1(V\times \disk\backslash W)$ is surjective. If $\pi_1(V\backslash U)$ is abelian, then $V\times \disk\backslash W$ is homotopy equivalent to $V\backslash U$
\end{theorem}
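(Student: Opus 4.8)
The plan is to foliate the complement of (the completion of) $W$ by punctured holomorphic curves modeled on the vertical disks $\{v\}\times\disk$, and to use the Siefring intersection theory between these curves and $W$, regarded as a holomorphic hypersurface, to show that for $v\notin U$ the curve through a point of $V\times\disk\setminus W$ misses $W$ and depends, up to the $\disk$-direction, only on $v$. \emph{Setup.} After smoothing corners, view $V\times\disk$ as a Liouville domain; its contact boundary carries the contact open book with page $V$, binding $\partial V\times\{0\}$ and trivial monodromy, and $\partial(U\times\disk)$ inherits a compatible open book with page $U$. In the symplectic completion $\widehat{V\times\disk}$ the filling $W$ completes to a properly embedded codimension-two symplectic submanifold $\widehat W$ with cylindrical end modeled on the symplectization of $\partial(U\times\disk)$, and the inclusion $V\times\disk\setminus W\hookrightarrow\widehat{V\times\disk}\setminus\widehat W$ is a deformation retract. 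For $v$ in the interior of $V$ the circle $\orbit_v:=\{v\}\times\partial\disk$ wraps the $\disk$-factor once and is, in the Morse--Bott sense, a Reeb orbit of \emph{minimal} action, with $\{v\}\times\C$ the asymptotic model of the planes we will use. Choose a tame almost complex structure $J$ on $\widehat{V\times\disk}$ that is cylindrical on the end, preserves $T\widehat W$ (so $\widehat W$ is $J$-holomorphic), and is of product type near the binding and near infinity.

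\emph{Holomorphic curves.} Let $\ModSpace$ be the moduli space of $J$-holomorphic planes in $\widehat{V\times\disk}$ asymptotic to some $\orbit_v$ in the homotopy class of $\{v\}\times\C$. The first --- and, I expect, hardest --- step is to prove that $\ModSpace$ is a smooth manifold of the expected dimension consisting of embedded, Fredholm-regular curves, and that it is compact modulo the evident degeneration in which $\orbit_v$ runs into the binding: minimality of the action of $\orbit_v$ forbids SFT breaking into multi-level buildings, minimality of $\int_{\{v\}\times\disk}\omega$ together with monotonicity rules out bubbling, and the Conley--Zehnder index of the asymptotic operator at $\orbit_v$ fixes the index count and gives automatic transversality in the normal $\C$-direction. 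This is precisely where the $L$-simple SFT formalism of Bao--Honda and the Siefring asymptotic analysis developed in this paper are needed.

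\emph{Intersection with $\widehat W$.} By the Siefring intersection formula between the punctured curve $C_v\in\ModSpace$ and the holomorphic hypersurface $\widehat W$, the algebraic intersection number $C_v\cdot\widehat W$ equals the relative homological intersection, which may be computed in the product model $V\times\C\supset U\times\C$ and is invariant under the perturbation of $J$: it vanishes exactly when $v\notin U$. Positivity of intersections between a somewhere-injective punctured $J$-holomorphic curve and a $J$-holomorphic hypersurface then forces $C_v\cap\widehat W=\varnothing$ for every $v\notin U$; applying the same reasoning to two planes gives $C_v\cap C_{v'}=\varnothing$ whenever $v\ne v'$ with $v,v'\notin U$.

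\emph{Topological conclusion.} The evaluation map from $\{(C_v,z):v\notin U,\ z\in C_v\}$ to $\widehat{V\times\disk}\setminus\widehat W$ is then proper (by the compactness step, degenerations toward the binding staying over $U$), a local diffeomorphism (regularity), and of degree one (computed in the model), hence a diffeomorphism; equivalently, $\{C_v\}_{v\notin U}$ is a smooth foliation of $\widehat{V\times\disk}\setminus\widehat W$ by holomorphic planes, and ``which plane'' defines a fibration over a parameter space $B\simeq V\setminus U$ with contractible fibers. Thus $\widehat{V\times\disk}\setminus\widehat W\simeq B\simeq V\setminus U$, the equivalence being realized by the near-infinity section given by the page $V\times\{(0,1)\}\setminus U\times\{(0,1)\}$, which meets each $C_v$ in a single point; combined with the deformation retract above this yields $V\times\disk\setminus W\simeq V\setminus U$. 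When the compactness of $\ModSpace$ toward the binding cannot be fully controlled for a general Liouville domain $V$, one still obtains the foliation over a neighborhood of infinity, enough to push any loop in $V\times\disk\setminus W$ out to the end along the planes and thereby prove that $\pi_1(V\times\{(0,1)\}\setminus U\times\{(0,1)\})\to\pi_1(V\times\disk\setminus W)$ is surjective; promoting this to a homotopy equivalence is exactly where the hypothesis that $\pi_1(V\setminus U)$ be abelian is used, via a covering-space (equivalently, classifying-space comparison) argument that pins down $\pi_1(V\times\disk\setminus W)$ from the surjection together with the homological information carried by the curves.
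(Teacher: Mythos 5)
Your outline has the right ingredients in the right places (planes in the homotopy class of the $\C$-fiber, Siefring's intersection number with $\widehat{W}$ forcing disjointness, pushing loops out to the cylindrical end), and your fallback argument for $\pi_1$-surjectivity is essentially the route the paper takes. However, the centerpiece of your main argument --- that the evaluation map from $\{(C_v,z):v\notin U\}$ is a proper local diffeomorphism of degree one onto $\widehat{V\times\disk}\setminus\widehat{W}$, hence a diffeomorphism producing a foliation with leaf space $V\setminus U$ --- is a genuine gap. In dimension greater than four there is no automatic transversality making $\mathrm{ev}$ a local diffeomorphism, and properness over interior points fails a priori: a sequence of planes through $q_k\to q\notin\widehat{W}$ with asymptotic parameters $v_k\to v_\infty\in\overline{U}$ can converge to a plane asymptotic to an orbit over $U$ that is still \emph{disjoint} from $\widehat{W}$ (the Siefring number $u\bullet W$ vanishes for those orbits as well, by the hidden-intersection computation of \Cref{ex:eigen} and \Cref{prop:intersection}; your assertion that it is nonzero exactly when $v\in U$ is not what the computation gives). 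So the planes with $v\notin U$ need not cover the complement, and "which plane" need not be well defined. The paper never claims such a foliation: it only establishes algebraic degree-one statements for evaluation against a point and against a loop (\Cref{prop:curve}, \Cref{prop:curve2}), which suffice for $\pi_1$-surjectivity but not for a diffeomorphism, and even the surjectivity step requires a delicate hand-built homotopy gluing parametrized curves across the paired boundary points of the one-dimensional moduli space.

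Two further gaps. First, the statement that the count of planes through a generic point is one, "computed in the model and invariant under perturbation of $J$," is precisely the hard analytic content: since neither $c_1(V)=0$ nor $H_2(V;\Q)=0$ is assumed, one must first pin down the homology class of the planes (the paper constructs the auxiliary holomorphic foliation $J^F$ of \S\ref{s3} for exactly this purpose), and the invariance of the count is extracted from the Bourgeois--Oancea isomorphism together with the map $\tau$ to $H^{*}(V\times\disk)$, not from a bare deformation argument. Second, the homotopy-equivalence conclusion needs the homology isomorphism $H_*(V\times\{(0,1)\}\setminus U\times\{(0,1)\})\to H_*(V\times\disk\setminus W)$ as an independent input before Hurewicz and Whitehead can be combined with the $\pi_1$-surjection and the abelian hypothesis; the paper obtains it purely topologically (\Cref{prop:homology}) from the fact that $U\to W$ is a homology isomorphism \cite{Z23}, via Gysin, excision and the five lemma. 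In your write-up this isomorphism appears only as a corollary of the unestablished foliation, so the second half of the theorem is left unproved.
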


The contact manifolds in \Cref{thm:V} have strong uniqueness results for their Liouville fillings \cite{BGZ,OV,Z23}. On the other hand, we have many examples of contact manifolds in all dimensions with infinitely many non-diffeomorphic Weinstein fillings \cite{zbMATH02041154, zbMATH07949277}. In fact, examples from \cite{zbMATH02041154, zbMATH07949277} are differed by their homology groups. The following rigidity result implies that many fillings can not be realized as a submanifold filling.  

\begin{theorem}\label{thm:binding}
    Let $W$ be a symplectic filling of $\partial V \times \{0\} \subset \partial (V\times \disk)$ in $V\times \disk$. Then $W\subset V\times \disk$ induces an isomorphism on homology and is surjective on fundamental groups. In particular, if $\pi_1(V)$ is abelian, we have $W\subset V\times \disk$ is a homotopy equivalence. 
\end{theorem}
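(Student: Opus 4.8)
The plan is to run a relative version of the Eliashberg--Floer--McDuff argument in which $W$ is turned into a holomorphic ``divisor'' inside the completion of $V\times\disk$, and a family of finite-energy punctured holomorphic planes is used to sweep out its complement; the Siefring intersection formula between such curves and the holomorphic hypersurface $\widehat W$ is exactly what keeps this family from degenerating.

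\textbf{Setup.} Since $V$ is Liouville, $V\times\disk$ is a Liouville domain; I would complete it to $X=\widehat{V\times\disk}$, whose cylindrical end is the symplectization of $\partial(V\times\disk)$, and recall that $\partial(V\times\disk)$ carries the open book with page $V$ and binding $\partial V\times\{0\}$. The filling $W$ completes to a properly embedded exact symplectic hypersurface $\widehat W\subset X$ which outside a compact set coincides with the symplectization of the binding; in the model case $W=V\times\{0\}$ this is just $\widehat V\times\{0\}\subset\widehat V\times\C$. I would then choose a compatible almost complex structure $J$ on $X$ that is cylindrical at infinity, adapted to the open book (so that the Reeb orbits near the binding and their asymptotic operators are explicit), and for which $\widehat W$ is $J$-holomorphic; the relevant model curves are the holomorphic planes $\{v\}\times\C$, each meeting $\widehat V\times\{0\}$ in a single transverse point.

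\textbf{Moduli space and compactness --- the crux.} Let $\ModSpace$ be the moduli space of finite-energy $J$-holomorphic planes in $X$ asymptotic to the ``meridian'' binding Reeb orbits, normalised so that the model elements are the planes $\{v\}\times\C$; in the model $\ModSpace\cong\widehat V$, and because these curves have trivial normal bundle, automatic transversality makes $\ModSpace$ a manifold of dimension $\dim V$. The delicate point is compactness. Exactness of $X$ excludes closed bubbles, so only SFT breaking can occur; to rule it out (along with multiple covers and nodal degenerations) I would invoke the Siefring intersection formula between the curves of $\ModSpace$, and the components of any putative limiting building, against the holomorphic hypersurface $\widehat W$: the homological intersection number with $\widehat W$ equals $1$, and a computation with the binding Reeb dynamics shows the asymptotic (writhe/winding) contributions vanish, so positivity of intersections forces every such curve to cross $\widehat W$ in exactly one point, and the index/action budget then excludes all breaking. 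The outcome is that the curves of $\ModSpace$ sweep out $X$, each crossing $\widehat W$ once, and near $\widehat W$ they trivialise its normal bundle.

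\textbf{Reading off the topology.} Deleting from each plane its intersection point with $\widehat W$ exhibits $X\setminus\widehat W$ as fibred by punctured planes $\cong\C^{\ast}$ over a base $B\simeq\widehat V\simeq V$ with fibre $\C^{\ast}\simeq\Sphere^{1}$, whence $H_*(X\setminus\widehat W)\cong H_*(V\times\Sphere^{1})$ together with a short exact sequence $1\to\Z\to\pi_1(X\setminus\widehat W)\to\pi_1(V)\to 1$. Feeding this into the long exact sequence of the pair $(X,X\setminus\widehat W)$ with the Thom isomorphism $H_*(X,X\setminus\widehat W)\cong H_{*-2}(\widehat W)$ forces $H_*(\widehat W)\to H_*(X)$ to be an isomorphism, which is the homology statement since $\widehat W\simeq W$ and $X\simeq V\times\disk$. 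Van Kampen applied to $X=N(\widehat W)\cup(X\setminus\mathrm{int}\,N(\widehat W))$, using that $\pi_1(\partial N(\widehat W))$ surjects onto $\pi_1(X\setminus\widehat W)$, gives surjectivity of $\pi_1(W)\to\pi_1(X)=\pi_1(V)$. When $\pi_1(V)$ is abelian the circle-bundle structure of $\partial N(\widehat W)$ and the above description of $\pi_1(X\setminus\widehat W)$ force $\pi_1(W)$ to be abelian, so the $H_1$-isomorphism and the Hopfian property of finitely generated abelian groups upgrade the $\pi_1$-surjection to a $\pi_1$-isomorphism; a covering-space refinement of the same fibration argument yields the homology isomorphism with local coefficients, and Whitehead's theorem then gives the homotopy equivalence. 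The main obstacle throughout is the compactness step: without the a priori intersection control provided by the $L$-simple Siefring theory developed in the paper, $\ModSpace$ could break into buildings wandering through $\widehat W$ or acquire nodes, destroying both the sweep-out and every topological conclusion drawn from it.
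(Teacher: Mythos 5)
Your overall strategy --- make $W$ into a holomorphic hypersurface and sweep out its complement by a family of holomorphic curves each meeting $\widehat W$ exactly once --- is the right one, but the specific moduli space you build it on does not exist, and this is where the paper's proof diverges from yours in an essential way. You take $\ModSpace$ to be punctured finite-energy planes in $\widehat{V\times\disk}$ asymptotic to ``meridian binding Reeb orbits,'' normalized so that the model elements are $\{v\}\times\C$. With the contact form of \S\ref{s3}, the Reeb orbits lying in the binding $\partial V\times\{0\}$ are the Reeb orbits of $(\partial V,\alpha_\Gamma)$, which are arranged to have very large period, and a meridian of the binding is not a Reeb orbit at all; the circles $\{v\}\times\Sphere^1\subset Y_V$ are Reeb orbits only when $v$ is a critical point of $f$. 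Consequently the fibers $\{v\}\times\C$ do not form a $\dim V$-dimensional family of asymptotically cylindrical planes with a common asymptotic orbit, SFT compactness does not apply to them, and the Siefring asymptotic analysis you invoke has no puncture to attach to. This is precisely the obstruction that \S\ref{s6} circumvents: the paper compactifies the $\C$-factor to $\CP^1$ and works with \emph{closed} spheres $u:\CP^1\to\widehat V\times\CP^1$ in the indecomposable fiber class $[\{\ast\}\times\CP^1]$, with marked-point constraints on the divisors $\widehat V\times\{(2,0)\}$ and $\widehat V\times\{\infty\}$. There, compactness is immediate (the fiber class cannot split into two classes of positive area, and the plurisubharmonic $r$-coordinate confines the curves), and the single transverse intersection with $\widehat W$ is a purely homological computation via classical positivity of intersections (\Cref{prop:intersection2}); no punctured-curve intersection theory is needed.

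A second gap is the passage from the moduli space to topology. You assert that $X\setminus\widehat W$ is literally fibered by punctured planes over $\widehat V$, and you derive $H_*(X\setminus\widehat W)\cong H_*(V\times\Sphere^1)$ and the $\pi_1$-extension from that fibration. A degree-one evaluation map from a moduli space does not give a fiber bundle structure on the complement (curves may intersect one another, and the evaluation map may have critical points), and nothing in your compactness discussion rules this out. The paper never claims a fibration: it represents integral homology classes by Zinger's pseudocycles, shows $\mathrm{ev}_0$ on $\cN_J(\rho)$ is a pseudocycle equivalent to $\rho$ (\Cref{prop:pseudocycle}), and then slides the evaluation point along $t\mapsto u(tp_u)$ to the unique intersection with $\widehat W$ to get surjectivity and injectivity on $H_*$ and surjectivity on $\pi_1$ directly, with no analysis of the complement at all. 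If you want to salvage a punctured-curve proof, the natural candidate is the family $\cM_J(\gamma_{p_{\max}},\cdot)$ of \S\ref{s4}, which does sweep out $\widehat{V\times\disk}$ with degree one and has intersection number $1$ with $\widehat W$; but that is a different argument from the one you wrote, and you would still need to replace the fibration claim by degree/pseudocycle reasoning.
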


The proofs of \Cref{thm:main} and \Cref{thm:V} are based on pseudo-holomorphic curves as in the absolute filling rigidity case. We can view $W$ as a holomorphic hypersurface by choosing appropriate almost complex structures. With the help of Siefring's intersection theory, we can detect the topology of the submanifold filling complement using pseudo-holomorphic curves. The proof of \Cref{thm:binding} is also based on intersection theory, but is applied differently. Siefring's intersection theory was used by C{\^o}t{\'e} and Fauteux-Chapleau \cite{zbMATH07821993} to define contact isotopic invariants of codimension $2$ submanifolds from SFT. However, the study of (relative) topology of symplectic submanifolds based on Siefring's intersection theory seems to be new.

Although we expect the above rigidity results to be special for the contact pairs in \Cref{thm:V} and \Cref{thm:binding}, we shall not have such a strong rigidity for a generic contact pair. It is not easy to construct contact manifolds with multiple fillings.  Moreover, there are no known examples of contact pairs with filling pairs that are smoothly the same but symplectically different. 
Another natural question is how to convert rigidity results in \Cref{thm:main} into obstruction results as in \cite{EliashbergT3} producing other applications in symplectic and contact topology, especially in understanding contact submanifolds in $(\Sphere^{2n+1},\xi_{\std})$.

We will review Siefring's intersection theory in \S \ref{s2} and give a self-contained proof of the intersection formula promised in \cite{moreno2019holomorphic}. The intersection formula is of a more restrictive form compared to that of \cite{moreno2019holomorphic} but is sufficient for applications in this paper and other topological applications like computation of the contact homology in \cite{avdek2024bourgeois} as well as the contact homology invariants with intersection information in \cite{zbMATH07821993}\footnote{Strictly speaking, one needs to modify \cite{zbMATH07821993} using almost complex structures in \cite{BH}, this works fine with Pardon's VFC theory \cite{Pardon} used in \cite{zbMATH07821993}}. We explain some of the almost complex structures in \S \ref{s3}, in particular, we build a holomorphic foliation on $\widehat{V\times \C}$ that is compatible with the contact structure to control homology classes of curves using classical intersection theory. Such a complication is necessary, as we assume neither $H_2(V;\Q)=0$ nor $c_1(V)=0$, homology classes are important to determine virtual dimensions as well as intersection numbers for later applications of Siefring's intersection theory. We will explain the holomorphic curves used to probe the knot complement in \S \ref{s4} and prove \Cref{thm:main} and \Cref{thm:V} in \S \ref{s5}. \Cref{thm:binding} is proved in \S \ref{s6}.

\subsection*{Acknowledgments}
The author is grateful to members of the symplectic fillings for the contact submanifolds group in the problem session of the 2024 AIM workshop on higher-dimensional contact topology for helpful discussions. The author is supported by the National Key R\&D Program of China under Grant No.\ 2023YFA1010500, the National Natural Science Foundation of China under Grant No.\ 12288201 and 12231010.
\section{Siefring's  intersection theory}\label{s2}
In \cite{Siefring11}, Siefring developed the intersection theory for punctured holomorphic curves in dimension $4$. The cornerstone of the intersection theory is the asymptotic expansion of a holomorphic curve near the punctures, whose $4$-dimensional case was established by Hofer-Wysocki-Zehnder \cite{HWZ1,HWZ2} and Siefring \cite{Siefring08}. In higher dimensions, such formulas are harder to derive for a general setup. A version of the higher-dimensional analog was announced by Siefring \cite{Sie} and was summarized in \cite[Theorem 2.2]{moreno2019holomorphic}. On the other hand, the $L$-simple formulation in the works of Bao-Honda \cite{BH-cylindrical,BH} makes the Cauchy-Riemann equation linear near a puncture, hence one can get an asymptotic formula to derive the intersection theory. In this section, we will explain the intersection theory in the $L$-simple setup. Although the $L$-simple set-up uses special contact forms as well as special almost complex structures near embedded Reeb orbits, it causes no loss of generality in applications because, in higher-dimensional questions, holomorphic hypersurfaces are often constructed by hand where those $L$-simple restrictions can usually be arranged. 
\subsection{Simple neighborhood of Reeb orbits}\label{ss:L-simple}
The following classes of almost complex structures are used to define symplectic field theory invariants of contact and Liouville domains in \cite{BH-cylindrical,BH}.
\begin{defn}\label{def:tame}
Let $\alpha$ be a contact form of a contact manifold $(Y^{2n-1}, \xi)$ with Reeb field $R$ and let $s$ be a coordinate on $I\subset \R$.
An almost complex structure $J$ on an $I$-symplectization $I\times Y$ is \emph{$\alpha$-tame} if
\begin{enumerate}
    \item $J$ is invariant under translation in the $s$-coordinate,
    \item $J(\partial_s)=F_J R$ for some $F_J\in C^\infty(Y,(0,\infty))$,
    \item there is a $2n-2$-plane field $\xi_J \subset TY$ satisfying $J\xi_J=\xi_J$, and
    \item $\rd \alpha(V, JV)>0$ for all non-zero $V\in \xi_J$.
\end{enumerate}
\end{defn}
In general, $\alpha$ will be nontrivial on $\xi_J$ so that this hyperplane field will not necessarily agree with $\xi$. Nevertheless, $J$ will be tamed by $\rd (e^{\delta s }\alpha)$ for $\delta \ll 1$, or equivalently it will be tamed by $\rd(e^s\alpha)$ for $F_J\gg 0$. In practice, the almost complex structure will almost be a usual $\alpha$-admissible almost complex structure, i.e.\ $F_J=1$ and $\xi_J$ is close to $\xi$ and $(\rd \alpha, J)$ is compatible on $\xi_J$. In this case, it will be tamed by $\rd(e^s\alpha)$. Compared to the usual $\alpha$-compatible almost complex structures \cite[\S 1.4]{zbMATH01643843}, tameness alone does not provide enough analytical foundations to define moduli spaces of holomorphic curves with punctures, see \Cref{rmk:ana} for extra ingredients needed.

Following the $L$-simple formulation \cite[Definition 3.1.1]{BH}, for any $L>0$, there exists a $C^0$-small perturbation $\alpha_Y$ of the contact form, such that
for any simple Reeb orbit $\gamma\subset Y$ of period at most $L$, there is a neighborhood $N_Y(\gamma)$ of $\gamma$ modeled on $[0,T:=\int \gamma^*\alpha_Y]_t\times \disk^{n-1}_{\delta}/\sim$, where $\disk^{n-1}_{\delta}$ is a radius $\delta$ ball in $\C^{n-1}$. Here, the equivalence relation $\sim$ is multiplication by $-1$ on some of the coordinates of $\C^{n-1}$ depending on the linear return map of $\gamma$ \cite[Lemma 3.1.2]{BH}. For our purpose here, it causes no loss of generality if we only consider the identity map on $\disk^{n-1}$ to glue, i.e.\ $[0,T]_t\times \disk^{n-1}_{\delta}/\sim=\Sphere^1_t \times \disk^{n-1}_{\delta}$. The contact form $\alpha_Y$ over such a neighborhood is given by 
$$(1+Q)\rd t+\beta$$
where $Q$ is a non-degenerate pure quadratic function (i.e.\ without constant term and linear terms) on $\disk^{n-1}_{\delta}$ and $\beta=\frac{1}{2}\sum_{i=1}^{n-1}(x_i\rd y_i-y_i\rd x_i)$. The Reeb vector field is given by $(1+Q-\beta(X_Q))(\partial_t-X_{Q})$,  where $X_Q$ is the Hamiltonian vector field of $Q$ w.r.t.\ $\rd \beta$, i.e.\ $\iota_{X_Q}\rd \beta=-\rd Q$. A direct computation shows that $Q=\beta(X_Q)$ and hence the Reeb vector field $R_Y$ is $\partial_t-X_Q$.

Following \cite[Definition 3.1.4]{BH}, an $L$-simple almost complex $J$ on $\widehat{Y}$ satisfies 
\begin{enumerate}
    \item $J$ is the standard complex structure on $\disk^{n-1}_{\delta}$ and $\xi_J$ is $T\disk^{n-1}_{\delta}$.
    \item $J(\partial_s)=R_Y=\partial_t-X_Q$\footnote{In \cite[Definition 3.1.4]{BH}, the authors wrote $J\partial_s=gR_Y$ for $g=1+Q-\beta(X_Q)$ in the a neighborhood of $\gamma$ and $g=1$ outside a neighborhood of $\gamma$. As $1+Q-\beta(X_Q)=1$, we can set $J\partial_s=R_Y$ globally in \cite{BH}.}.
    \item $J$ is $\alpha_Y$-tame.
\end{enumerate}
Note that $\xi_J=\xi$ along $\gamma$, we can arrange that $\xi_J=\xi$ outside of $N_Y(\gamma)$ for all of those simple Reeb orbits with period at most $L$ and $\xi_J$ is $C^\infty$ close to $\xi$ inside those neighborhoods. By imposing the condition that $(\rd \alpha, J)$ is compatible on the symplectic subspace $\xi_J$, our $J$ is $C^\infty$ close to an $\alpha$-compatible almost complex structure in the usual sense. In particular, $e^s$ is strictly plurisubharmonic w.r.t.\ $J$.

As $Q$ is a pure quadratic function, there exists $2n-2$ by $2n-2$ constant matrix $S_Q$, such that $X_Q$ in basis $(\partial_{x_1},\ldots, \partial_{x_{n-1}},\partial_{y_1},\ldots, \partial_{y_{n-1}})$ is given by $S_Q \cdot (x_1,\ldots,x_{n-1},y_1.\ldots, y_{n-1})^T$. Then, with suitable cylindrical coordinates near the puncture, the Cauchy-Riemann equation for $u$ near a puncture asymptotic to $\gamma^k$ can be written as the linear equation:
\begin{equation}\label{eqn:linearCR}
    \frac{\partial \eta}{\partial s}+J\frac{\partial \eta}{\partial t}+JS_Q\eta=0
\end{equation}
for $u(s,t)=(s,t,\eta(s,t))$ with $\eta(s,t)\in \disk^{n-1}_{\delta}$ and $(s,t)\in \R_{\pm}\times [0,kT]/0\sim kT$ with complex structure $j\partial_s=\partial_t$, see \cite[(3.1.2)]{BH}. It is straightforward to compute that $S_Q=J\Hess_Q$.

\begin{remark}\label{rmk:ana}
    Here we remark that using such almost complex structures causes no analytic issue for holomorphic curves with finite Hofer energy:
    \begin{enumerate}
        \item The relevant SFT compactness theorem for $\alpha$-tame almost structures can be found in  \cite[§3.4]{BH-cylindrical}.
        \item The asymptotic analysis for holomorphic curves using $L$-simple almost complex structures holds as the equation is linear in a standard form near the punctures. The essence is that the asymptotic operator is self-adjoint w.r.t.\ a suitable $L^2$-norm. The $L$-simple case is a special case of \cite[\S 7.2]{wendl2016lectures}, as $(\rd t,\alpha)$ gives a stable Hamiltonian structure near the Reeb orbits, and an $L$-simple almost complex structure is compatible with this stable Hamiltonian structure. For general $\alpha$-tame almost complex structures, the self-adjointness of the total asymptotic operator is needed, as all literature on the asymptotic behavior and Fredholm properties is built upon the self-adjoint asymptotic operators. For $L$-simple almost complex structures, the asymptotic operator is self-adjoint near non-constant orbits of Hamiltonians that only depend on $s$. Hence, it is also possible to define symplectic cohomology for such Hamiltonians and $L$-simple almost complex structures. 
        \item  For the purpose of defining symplectic cohomology, we also need $e^s$ to be plurisubharmonic to ensure a maximum principle to get $C^0$ bound. This holds as long as $\xi_J$ is close to $\xi$, for example.
    \end{enumerate}
\end{remark}

\subsection{Siefring's intersection theory}\label{ss:intersection}
In this section, we review Siefring's intersection theory using the $L$-simple setup. The chain of arguments is identical to the $4$-dimensioncal case, for which we will follow Wendl's book \cite{Wendlbook}. Let $W$ be a symplectic (strong) cobordism from $Y_-$ to $Y_+$. Now we consider codimension $2$ contact submanifolds $H_{\pm}\subset Y_{\pm}$ with trivial normal bundles and a codimension $2$ symplectic hypersurface $H\subset W$ with $H\pitchfork Y_{\pm}=H_{\pm}$. We assume the contact form on a neighborhood of $H_{\pm}$ is given by $(H_{\pm}\times \disk_{\delta}, (1+f_{\pm})\alpha_{H_{\pm}}+x\rd y-y\rd x)$, where $\alpha_{H_{\pm}}$ is an $L$-simple contact form on $H_{\pm}$ and $f_{\pm}:\disk:\to \R$ are non-degenerate pure quadratic functions. Although different choices of $f$ will yield homotopic contact structures on $H_{\pm}\times \disk_{\delta}$ (not relative to the boundary), the specific choices of $f$ i.e.\ whether it is a saddle point or a local minimum/maximum as well as the size of the Hessian, will be enforced by the global contact topology in applications. 

We can compute that the Reeb vector in those neighborhoods is given by 
$$R_{Y_{\pm}}=R_{H_{\pm}}-X_f=\partial_t-X_Q-X_f.$$
By choosing an $L$-simple almost complex structure $J_{H_\pm}$ on $H_{\pm}$, we have an almost complex structure $J_{\pm}$ on $Y_{\pm}$ defined by $J_{\pm}\partial_s=R_{Y_{\pm}}$ and $J=J_0$, the standard complex structure, on $\disk^{n-1}_{\delta}\oplus \disk^2_{\delta}$ near the neighborhood of Reeb orbits of $H_{\pm}$ and $\alpha_{Y_{\pm}}$-compatible outside the neighborhoods (with an interpolation domain in between). In particular, $\widehat{H}_{\pm}$ is a holomorphic hypersurface in $\widehat{Y}_{\pm}$. We equip $\widehat{W}$ with a tame almost complex structure that is the above almost complex structures near the positive/negative ends of $W$. Moreover, we can arrange $\widehat{H}$ to be a holomorphic hypersurface in $\widehat{W}$. Then 
the Cauchy-Riemann equation for $u$ near a puncture asymptotic to $\gamma^k\subset H_{\pm}\times \disk_{\delta}$ can be written as the linear equation:
\begin{eqnarray} 
    \frac{\partial \eta}{\partial s}+J_0\frac{\partial \eta}{\partial t}-\Hess_Q\eta & = & 0 \nonumber \\
    \frac{\partial \eta_{\perp}}{\partial s}+J_0\frac{\partial \eta_{\perp}}{\partial t}-\Hess_f\eta_{\perp} & = & 0 \label{eqn:linearCR_perp}
\end{eqnarray}
for $u(s,t)=(s,t,\eta(s,t),\eta_{\perp}(s,t))$ with $\eta(s,t)\in \disk^{n-1}_{\delta}, \eta_{\perp}(s,t)\in \disk_{\delta}$ and $(s,t)\in \R_{\pm}\times [0,kT]/0\sim kT$ with complex structure $j\partial_s=\partial_t$. We write $A_f=-J\frac{\rd}{\rd t}+\Hess_f$, which is a self-adjoint operator. The non-degeneracy assumption implies that $0$ is not an eigenvalue of $A_f$. We use $\ldots<a_{-1}<0<a_1<a_2<\ldots$ to denote the spectrum $\sigma(A_f)$ of $A_f$. Then $\eta_{\perp}$ has the following expansion depending on whether $u$ is asymptotic to $\gamma^k$ positively or negatively. 
\begin{equation}\label{eqn:asym}
    \eta_{\perp} = \sum_{i=1}^{\infty} e^{a_is}\eta_i(t), s\in \R_- \text{ or }  \eta_{\perp} = \sum_{i=-1}^{-\infty} e^{a_is}\eta_i(t), s\in \R_+
\end{equation}
where $\eta_i(t)$ is an eigenfunction of $A_f$ with eigenvalue $a_i$. It is clear that each $\eta_i(t)$ is nowhere zero, therefore, we have a well-defined winding number $\wind(\eta_i):= \deg (\eta_i(t)/|\eta_i(t)|)$.
\begin{proposition}[{\cite[Theorem 3.15]{Wendlbook}}]\label{prop:wind}
There exists a well-defined integer-valued function
$$\wind:\sigma(A_f)\to \Z$$
defined by $\wind(a_i)=\wind(\eta_i)$. This map is increasing monotonically and attains each value in $\Z$ exactly twice (counting the multiplicity of eigenvalues).
\end{proposition}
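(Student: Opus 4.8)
The plan is to reproduce, in this $L$-simple situation, the classical four-dimensional argument of Hofer--Wysocki--Zehnder \cite{HWZ1} (see also \cite[Chapter 3]{Wendlbook}) for the operator $A_f=-J\frac{\rd}{\rd t}+\Hess_f$ acting on loops $\eta\colon\R/(kT)\Z\to\R^2\cong\C$, where $J=J_0$ acts as multiplication by $i$. First I would record the ODE structure: an eigenfunction with eigenvalue $a$ solves the first-order linear equation $\eta'=J(aI-\Hess_f)\eta$, and since $J$ times a symmetric matrix is traceless, its fundamental solution $\Phi_a(t)$ lies in $\SLtwoR$, with $a\in\sigma(A_f)$ if and only if $\Phi_a(kT)$ has $1$ as an eigenvalue. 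Uniqueness of solutions of a linear ODE shows that a nonzero eigenfunction is nowhere zero (as already observed before the statement), so $\wind(\eta):=\deg(\eta/|\eta|)$ is a well-defined integer. To see that it depends only on $a$: if $\dim\ker(A_f-a)=1$ the two unit eigenfunctions differ by a sign and have equal winding, while if $\dim\ker(A_f-a)=2$ then $\Phi_a(kT)=\Id$, so $v\mapsto\wind(\Phi_a(\cdot)v)$ is a continuous $\Z$-valued function on the connected space $\R^2\setminus\{0\}$, hence constant; this gives the well-defined map $\wind\colon\sigma(A_f)\to\Z$.

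Next is monotonicity. Writing an eigenfunction in polar form $\eta=r\,e^{i\theta}$, the equation $\eta'=J(aI-\Hess_f)\eta$ yields $\dot\theta(t)=\langle(aI-\Hess_f)\widehat\eta(t),\widehat\eta(t)\rangle$ with $\widehat\eta=\eta/|\eta|$, and the key point is that this angular speed, \emph{evaluated at a fixed unit direction}, is strictly increasing in $a$. Hence if $a_1<a_2$ are eigenvalues with eigenfunctions $e_1,e_2$ and continuous angle functions $\theta_1,\theta_2$, then at any time $t_0$ where $e_1(t_0)$ and $e_2(t_0)$ span the same line one gets $\dot\theta_1(t_0)-\dot\theta_2(t_0)=a_1-a_2<0$. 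Therefore $f:=\theta_1-\theta_2$ is strictly decreasing whenever $f\in\pi\Z$, so it meets each value of $\pi\Z$ at most once and only transversally from above; since $f(kT)-f(0)=2\pi\big(\wind(e_1)-\wind(e_2)\big)$, if this were positive $f$ would have to re-cross a value of $\pi\Z$ from below, a contradiction. Thus $\wind(e_1)\le\wind(e_2)$, so $\wind$ is weakly increasing along $\sigma(A_f)$ counted with multiplicity.

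Finally, to show each integer is attained with total multiplicity exactly $2$, I would homotope $\Hess_f$ to $0$. Consider $A_\tau:=-J\frac{\rd}{\rd t}+(1-\tau)\Hess_f$ for $\tau\in[0,1]$, a norm-resolvent-continuous family of self-adjoint operators on $L^2(\R/(kT)\Z,\R^2)$; each $A_\tau$ has nowhere-vanishing eigenfunctions and hence a well-defined, weakly increasing $\wind_\tau\colon\sigma(A_\tau)\to\Z$. For $m\in\Z$ set $N_\tau(m):=\sum_{a\in\sigma(A_\tau),\,\wind_\tau(a)=m}\dim\ker(A_\tau-a)$. Combining Kato's perturbation theory for the spectral projections of $A_\tau$ onto small intervals, the monotonicity just established, and the stability of winding under uniform convergence of nowhere-zero loops (so that all eigenfunctions with eigenvalue in a small enough window carry a single common winding number, whose total multiplicity equals the rank of the corresponding spectral projection), one checks that $N_\tau(m)$ is locally constant in $\tau$, hence independent of $\tau$. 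At $\tau=1$ the operator is $-J\frac{\rd}{\rd t}$, whose spectrum is $\{\tfrac{2\pi m}{kT}:m\in\Z\}$ with each eigenvalue $\tfrac{2\pi m}{kT}$ having the two-dimensional eigenspace $\{t\mapsto e^{2\pi imt/(kT)}v:v\in\C\}$ of winding $m$, so $N_1(m)=2$; therefore $N_0(m)=2$ for all $m$, which is the claim.

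The main obstacle is that last step: turning ``$N_\tau$ is locally constant'' into a rigorous statement, i.e.\ controlling the winding numbers of eigenfunctions as eigenvalues collide or split along the homotopy. Monotonicity of $\wind_\tau$ restricts which eigenvalues can merge (only ones of equal winding), and together with continuity of the interval spectral projections and of the winding number under $C^1$-convergence this yields the invariance; but this is the one place where the soft ODE and topological inputs must be assembled with care. An alternative route that sidesteps the homotopy is to compute directly the rotation number of the path $a\mapsto\Phi_a(kT)$ in $\SLtwoR$, in the style of \cite{HWZ1}.
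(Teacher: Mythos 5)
The paper gives no proof of this proposition at all --- it is quoted directly from \cite[Theorem 3.15]{Wendlbook} --- and your argument is a correct reconstruction of the standard Hofer--Wysocki--Zehnder/Wendl proof of that cited result, correctly specialized to the constant-coefficient operator $A_f=-J\frac{\rd}{\rd t}+\Hess_f$ of the $L$-simple setting: nonvanishing of eigenfunctions via ODE uniqueness, well-definedness and monotonicity of $\wind$ via the comparison of angular speeds $\dot\theta=\langle(aI-\Hess_f)\widehat\eta,\widehat\eta\rangle$ at times where two eigenfunctions are collinear, and the multiplicity count by deforming $\Hess_f$ to $0$. The step you flag as delicate, local constancy of $N_\tau(m)$, is indeed where the work lies, but the ingredients you name (continuity of spectral projections over intervals whose endpoints lie in spectral gaps, $C^1$-convergence of eigenfunctions, integrality of the winding, and the already-established monotonicity, which prevents eigenvalues of different winding from colliding) do assemble into a complete argument, so I see no gap.
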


Now, given a holomorphic curve $u:\Sigma\to \widehat{X}$, such that the asymptotic orbits of $u$ are either Reeb orbits of the period at most $L$ contained in $H_{\pm}$, or Reeb orbits do not intersect $H_{\pm}$. 

\begin{proposition}
    If $\ima u\not \subset \widehat{H}$, then there is a finite number of points $p$ in $\Sigma$, such that $u(p)\in \widehat{H}$.
\end{proposition}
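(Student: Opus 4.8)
The plan is to combine two classical facts: first, that the set $u^{-1}(\widehat H)$ is discrete in the interior of $\Sigma$ by an application of positivity of intersections / unique continuation for pseudoholomorphic maps; and second, that the asymptotic expansion near each puncture forces $u$ to stay away from $\widehat H$ near the ends, so that $u^{-1}(\widehat H)$ is in fact contained in a compact subset of the interior, hence finite.

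For the interior discreteness, I would argue as follows. Since $\widehat H$ is a holomorphic hypersurface, near any point $q\in\widehat H$ we may choose holomorphic coordinates on $\widehat X$ in which $\widehat H = \{w=0\}$ for a complex coordinate $w$, with the almost complex structure standard to first order along $\widehat H$; more robustly, one invokes the standard local structure theorem for intersections of a $J$-holomorphic curve with a $J$-holomorphic submanifold (Cieliebak--Mohnke, or the Micallef--White / Siefring local analysis used in the $4$-dimensional case in \cite{Wendlbook}): either $\ima u$ is locally contained in $\widehat H$, or the intersection points are isolated and carry a well-defined positive local intersection multiplicity. Because $\Sigma$ is connected (or, if not, we argue component by component) and $\ima u\not\subset\widehat H$ by hypothesis, unique continuation rules out $u$ mapping an open subset into $\widehat H$, so all interior intersection points are isolated.

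For the behavior near the punctures, this is where the $L$-simple setup of \S\ref{ss:L-simple} does the work. Near a puncture asymptotic to $\gamma^k\subset H_\pm\times\disk_\delta$, write $u(s,t)=(s,t,\eta(s,t),\eta_\perp(s,t))$ as in \eqref{eqn:linearCR_perp}, so that $\widehat H$ is cut out locally by $\eta_\perp=0$. If the asymptotic orbit does not meet $H_\pm$ at all, then a neighborhood of the corresponding end of $u$ is disjoint from $\widehat H$ and there is nothing to check. If it does, then by the asymptotic formula \eqref{eqn:asym}, for $|s|$ large we have $\eta_\perp(s,t)=e^{a_i s}\eta_i(t)(1+o(1))$ where $\eta_i$ is an eigenfunction of $A_f$, and crucially $\eta_i(t)$ is nowhere zero (as noted just before \Cref{prop:wind}). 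Hence $\eta_\perp(s,t)\neq 0$ for all $t$ once $|s|\ge s_0$, i.e.\ $u(s,t)\notin\widehat H$ on that end. Therefore $u^{-1}(\widehat H)$ is contained in the complement of the ends, which is compact; being also discrete, it is finite.

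The main obstacle is the asymptotic step, and specifically the legitimacy of the leading-order expansion \eqref{eqn:asym} with a nonvanishing eigenfunction. In the $4$-dimensional case this is Siefring's asymptotic theorem; here one must invoke that in the $L$-simple setup the normal component $\eta_\perp$ satisfies the genuinely linear equation \eqref{eqn:linearCR_perp} with the self-adjoint operator $A_f=-J\frac{\rd}{\rd t}+\Hess_f$, so the standard ODE-style asymptotic analysis (as in \cite[\S 7.2]{wendl2016lectures} and \Cref{rmk:ana}) applies directly and yields convergence to a single nonzero eigenfunction term, unless $\eta_\perp\equiv 0$ near the puncture --- in which case unique continuation again forces $\ima u\subset\widehat H$, contrary to hypothesis. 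One should also be mildly careful that $u$ may have several punctures and that finitely many of its asymptotic orbits lie in the $H_\pm$; since there are finitely many punctures and each contributes a neighborhood disjoint from $\widehat H$, the compactness conclusion is unaffected.
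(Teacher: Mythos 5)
Your argument is correct and follows essentially the same route as the paper: unique continuation plus the asymptotic expansion \eqref{eqn:asym} with nowhere-vanishing eigenfunctions rules out intersections near the punctures, and the local structure theorem for intersections of a holomorphic curve with a holomorphic hypersurface (the paper cites the corresponding exercise in McDuff--Salamon) gives isolatedness on the remaining compact part. The only difference is that you spell out the two steps in more detail than the paper's two-line proof.
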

\begin{proof}
    By the unique continuation, the normal asymptotic expansion of $\eta_{\perp}$ \eqref{eqn:asym} must not be zero, hence there are no intersection points near the punctures. Then the claim follows from \cite[Exercise 2.6.1. (i)]{McDuff91}.
\end{proof}
For each $p\in \Sigma$, such that $u(p)\in \widehat{H}$, we can define the local intersection number $\delta(p,u,H)$ be the total intersection number of $u$ and $\widehat{H}$ around $p$ by a generic perturbation. Such a number is strictly positive by the positivity of intersection and is one if and only if they intersect transversely \cite[Exercise 2.6.1]{McDSal}.

We define $u_{\tau}$ to be the push-off of $u$ near the punctures by the following rule: we push off $u$ in the direction of $\eta_{-1}$ near positive punctures and in the direction of $\eta_{1}$ near negative punctures. After push-off, $u_{\tau}$ is not asymptotic to $\widehat{H}$, in particular, they have well-defined topological intersection numbers $u_{\tau}\cdot \widehat{H}$ yielding the following definition.
\begin{defn} 
   We define the  intersection number
    $u\bullet H := u_{\tau}\cdot \widehat{H}$.
\end{defn}
$u\bullet H$ only depends on the homotopy class of $u$ as maps that are convergent to those Reeb orbits, see \cite[\S 4.2]{Wendlbook}.                         

Now given a such holomorphic curve $u$, for a positive puncture $p$ at which $u$ is asymptomatic to an orbit in $H$, we define the hidden intersection at $p$ to be 
$$\delta_\infty(p,u,H)=\wind(\eta_{-1})-\wind_p(u)$$
where $\wind_p(u)$ is defined to be the winding number of the leading term eigenfunction of $u$ in the asymptotic expansion near $p$. Similarly, for a negative puncture $q$, we define the hidden intersection at $q$ to be 
$$\delta_{\infty}(q,u,H)=\wind_q(u)-\wind(\eta_{1}).$$
For the remaining punctures, we define $\delta_{\infty}(p,u,H)=0$. By \Cref{prop:wind} and \eqref{eqn:asym}, we have $\delta_\infty(u,p,H)\ge 0$ for a holomorphic $u$. By \eqref{eqn:asym}, the intersection number in $u_{\tau} \cdot \widehat{H}$ near a puncture is measured by the discrepancy of the winding numbers, i.e.\ $\delta_\infty(p,u,H)$. Hence, we have the following formula. 
\begin{theorem}\label{thm:intersection}
Let $u$ be a holomorphic curve with finite Hofer energy. Assume that all asymptotic orbits of $u$ that are contained in $\widehat{H}$ have periods at most $L$.  If $\ima u\not \subset \widehat{H}$, then we have 
$$u\bullet H = \sum_{p\in \Sigma, u(p)\in \widehat{H}}\delta(p,u,H)+\sum_{p\in \Gamma^+\cup \Gamma^-} \delta_{\infty}(p,u,H),$$
where $\delta(p,u,H)>0$ and $\delta_{\infty}(p,u,H)\ge 0$.
\end{theorem}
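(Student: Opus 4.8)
The plan is to compute the topological intersection number $u\bullet H = u_\tau\cdot\widehat{H}$ directly, by localizing the intersection locus of $u_\tau$ with $\widehat{H}$ into a compact interior piece and the cylindrical ends. First I would fix, for every puncture $p\in\Gamma^+$ (resp.\ $p\in\Gamma^-$), a cylindrical half-neighborhood $Z_p\cong[R_0,\infty)\times S^1$ (resp.\ $(-\infty,-R_0]\times S^1$) on which $u$ is governed by the asymptotic expansion \eqref{eqn:asym}, and let $\Sigma_0$ be the complement of the $Z_p$ in $\Sigma$. By the preceding finiteness proposition the set $\{p\in\Sigma : u(p)\in\widehat{H}\}$ is finite, so for $R_0$ large it lies in $\Sigma_0$; I would also arrange the push-off $u_\tau$ to be supported in the $Z_p$, so that $u_\tau=u$ on $\Sigma_0$. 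Along each end $u_\tau$ eventually leaves a neighborhood of $\widehat{H}$ --- either because the asymptotic orbit of $u$ there is disjoint from $H_\pm$, or because $u_\tau$ has been pushed a fixed distance off that orbit --- so $u_\tau^{-1}(\widehat{H})$ is compact and $u_\tau\cdot\widehat{H}$ is a well-defined integer (consistent with the homotopy invariance recalled in \cite[\S 4.2]{Wendlbook}), which splits as an interior count over $\Sigma_0$ plus the contributions of the ends.

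For the interior count, on $\Sigma_0$ one has $u_\tau=u$ with both $u$ and $\widehat{H}$ holomorphic, so by positivity of intersections \cite[Exercise 2.6.1]{McDSal} each $p$ with $u(p)\in\widehat{H}$ contributes its local multiplicity, which is $\delta(p,u,H)>0$ by definition; hence the interior count equals $\sum_{p\in\Sigma,\,u(p)\in\widehat{H}}\delta(p,u,H)$.

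For an end $Z_p$ with $p\in\Gamma^+$: if the asymptotic orbit of $u$ at $p$ is disjoint from $H_+$ then $Z_p$ maps outside a neighborhood of $\widehat{H}$ and contributes $0=\delta_\infty(p,u,H)$. Otherwise the orbit is some $\gamma^k\subset H_+\times\disk_{\delta}$ of period $\le L$, the normal component $\eta_\perp$ of $u$ solves the linear equation \eqref{eqn:linearCR_perp}, and by \eqref{eqn:asym} --- with $\eta_\perp\not\equiv 0$ by unique continuation, since $\ima u\not\subset\widehat{H}$ --- it has a nonzero leading eigenmode $e^{a_j s}\eta_j(t)$ with $a_j\le a_{-1}$, so $\eta_\perp$ is nowhere zero on $Z_p$ and has winding $\wind_p(u)=\wind(\eta_j)$ around $p$. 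The push-off replaces $\eta_\perp$ by $\eta_\perp+\nu$, where $\nu$ is a section of the normal line bundle of $\widehat{H}$ along $u|_{Z_p}$ that vanishes near $s=R_0$, has fixed nonzero size for $s$ large, and winds like $\eta_{-1}$; then $\eta_\perp+\nu$ equals $\eta_\perp$ near $s=R_0$ (winding $\wind_p(u)$) and is a small perturbation of $\nu$ for $s$ large (winding $\wind(\eta_{-1})$), and has no zeros for $s$ large, so the signed zero count on $Z_p$ is the relative degree $\wind(\eta_{-1})-\wind_p(u)=\delta_\infty(p,u,H)$, which is $\ge 0$ by the monotonicity of $\wind$ in \Cref{prop:wind} (as $a_j\le a_{-1}$ forces $\wind(\eta_j)\le\wind(\eta_{-1})$). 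The case of a negative puncture $q\in\Gamma^-$ is symmetric: the leading eigenmode has winding $\wind_q(u)=\wind(\eta_j)$ with $a_j\ge a_1$, I would push off in the $\eta_1$-direction, and the relative-degree count on $Z_q$ gives $\wind_q(u)-\wind(\eta_1)=\delta_\infty(q,u,H)\ge 0$. Summing the interior count and all the end contributions yields the stated formula.

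I expect the main obstacle to be the asymptotic analysis behind \eqref{eqn:asym}: that a nonzero solution of \eqref{eqn:linearCR_perp} decays at a puncture exactly at the rate of some eigenvalue of the self-adjoint operator $A_f$, with a leading eigenfunction of well-defined winding. This is precisely where the $L$-simple set-up is used --- it makes the normal Cauchy--Riemann equation linear with self-adjoint asymptotic operator, so \eqref{eqn:asym} is an honest eigenmode decomposition and the windings entering the count are unambiguous (see \Cref{rmk:ana}). Granting this, positivity of intersections in the interior and the relative-degree computation for a section of a complex line bundle over a half-cylinder are routine.
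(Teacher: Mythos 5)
Your proposal is correct and follows essentially the same route as the paper, which only sketches the argument: split $u_\tau\cdot\widehat{H}$ into an interior count handled by positivity of intersections and end contributions computed as the winding discrepancy between the leading eigenmode of $\eta_\perp$ and the push-off direction $\eta_{\mp 1}$, with nonnegativity from the monotonicity in \Cref{prop:wind}. Your write-up supplies the details (the relative-degree computation on the half-cylinders and the compactness of $u_\tau^{-1}(\widehat{H})$) that the paper leaves implicit.
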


\begin{cor}\label{cor:leaf}
    If $u\bullet H = 0$, then either $\ima u\subset \widehat{H}$ or $\ima u \cap \widehat{H} =  \emptyset$. If $u\bullet H <0$, then we must have  $\ima u\subset \widehat{H}$.
\end{cor}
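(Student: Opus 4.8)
The plan is to deduce \Cref{cor:leaf} directly from \Cref{thm:intersection}, whose content is a dichotomy: either $\ima u\subset\widehat{H}$, or the intersection formula
$$u\bullet H = \sum_{p\in\Sigma,\,u(p)\in\widehat{H}}\delta(p,u,H)+\sum_{p\in\Gamma^+\cup\Gamma^-}\delta_{\infty}(p,u,H)$$
holds with each $\delta(p,u,H)>0$ and each $\delta_{\infty}(p,u,H)\ge 0$. I will keep the standing hypotheses of \Cref{thm:intersection} in force (finite Hofer energy for $u$, and all asymptotic orbits of $u$ contained in $\widehat{H}$ of period at most $L$), so that the theorem is applicable whenever $\ima u\not\subset\widehat{H}$.

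For the first assertion, I would assume $u\bullet H=0$ and $\ima u\not\subset\widehat{H}$, apply the formula, and observe that it writes $0$ as a sum of nonnegative integers; hence every term vanishes. In particular $\delta(p,u,H)=0$ at every $p$ with $u(p)\in\widehat{H}$, which is impossible by positivity of intersection \cite[Exercise 2.6.1]{McDSal} unless there are no such points. Thus $\ima u\cap\widehat{H}=\emptyset$, which is the stated alternative.

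For the second assertion I would argue by contradiction: if $u\bullet H<0$ and $\ima u\not\subset\widehat{H}$, then the same formula exhibits $u\bullet H$ as a sum of nonnegative terms, forcing $u\bullet H\ge 0$, a contradiction; hence $\ima u\subset\widehat{H}$.

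There is no genuine obstacle here, as the corollary is a formal consequence of \Cref{thm:intersection}. The only subtleties worth flagging are that one must invoke the strict positivity $\delta(p,u,H)>0$ (rather than mere nonnegativity) to eliminate intersection points in the first case, and that the applicability of \Cref{thm:intersection}, and hence the whole argument, rests on $u$ having finite Hofer energy together with the period bound on its asymptotic orbits lying inside $\widehat{H}$.
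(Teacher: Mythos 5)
Your proof is correct and is exactly the intended deduction: the paper states \Cref{cor:leaf} without proof as an immediate consequence of \Cref{thm:intersection}, and your argument (vanishing of a sum of nonnegative terms forces no interior intersections via strict positivity of $\delta(p,u,H)$, and nonnegativity of the right-hand side rules out $u\bullet H<0$ unless $\ima u\subset\widehat{H}$) is the formal consequence the author has in mind. Your flagging of the standing hypotheses (finite Hofer energy and the period bound on asymptotic orbits in $\widehat{H}$) is appropriate, as these are indeed implicitly carried over from \Cref{thm:intersection}.
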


\begin{remark}
As long as the Cauchy-Riemann equation can be decoupled into the hypersurface direction and the normal direction, and the equation in the normal direction has at least an asymptotic expansion in the form of \cite[Theorem 3.11]{Wendlbook}, e.g.\ the equation is linear as in \eqref{eqn:linearCR_perp}, we will have the intersection theory. The form of the equation in the hypersurface direction, e.g. \eqref{eqn:linearCR}, does not affect the theory. Therefore, one can develop an intersection theory for contact forms and almost complex structures that are only ``$L$-simple" in the normal direction. This is developed and used in \cite{DG}.
\end{remark}

\begin{remark}
   The trivial normal bundle assumption of $H_{\pm}\subset Y_{\pm}$ is just for the cleanness of the statement, since we have a global $L$-simple neighborhood of $H_{\pm}$. In applications,  we only need $L$-simple models near Reeb orbits. Since over a neighborhood of a simple Reeb orbit in $H_{\pm}$, the normal bundle must be trivial.
\end{remark}

\begin{ex}\label{ex:eigen}
    Throughout this paper, we will only encounter \eqref{eqn:linearCR_perp} such that $f$ is a local maximum with a small Hessian, i.e.\ $f=-\epsilon(x^2+y^2)$. In this case, $A_f=-J\frac{\rd}{\rd t}-2\epsilon$. As a consequence. we have $a_{-1}=-2\epsilon$ with multiplicity $2$. The corresponding eigenfunctions are constant functions on $S^1$. As a consequence. $\wind(a_{-1})=0$.
\end{ex}

\subsection{Foliations of holomorphic hypersurfaces }
Although this will not be used here, intersection theory can help confine holomorphic curves in a leaf in the presence of foliations by holomorphic hypersurfaces as outlined in \cite{moreno2019holomorphic}. To incorporate the special form of intersection theory in \Cref{thm:intersection}, we need to use special foliations, which are sufficient for applications in \cite{avdek2023algebraic,avdek2024bourgeois,DG}.

Let $H$ be a codimension $2$ contact submanifold with a trivial normal bundle in $Y$. We assume the contact form near $H$ is $L$-simple in \S \ref{ss:L-simple}, i.e.\ around a simple Reeb orbit $\gamma$, we have
$$\alpha = (1+Q+f)\rd t+\frac{1}{2}\sum_{i=1}^{n-1}(x_i\rd y_i-y_i\rd x_i)+\frac{1}{2}(x_n\rd y_n-y_n\rd x_n)$$
with $Q$ is a pure quadratic function in $\{x_i,y_i\}_{1\le i \le n-1}$ and $f$ is a pure quadratic function in $x_n,y_n$. Given an $L$-simple almost complex structure near $\gamma$, we have local $J$-holomorphic foliations as follows: Let $\nu:\R_{\pm}\to \disk_{\delta}$ be the gradient flow of $f$ w.r.t.\ the Euclidean metric. By our choice of $J$, we have 
$$\R_{\pm}\times \Sphere^1_t\times \disk^{n-1}_{\delta} \to \R_{\pm}\times \Sphere^1_t\times \disk^{n-1}_{\delta} \times \disk_{\delta}, \qquad (s,t,z)\mapsto (s,t,z, \nu(s))$$
is holomorphic w.r.t.\ the almost complex structure on  $\R_{\pm}\times \Sphere_t\times \disk^{n-1}_{\delta}\subset \widehat{H}$ induced from $\widehat{H}$. Those foliations project to gradient flow lines in $\disk_{\delta}$. 
\begin{defn}
    Let $W$ be a symplectic cobordism with completion $(\widehat{W},J)$. We say that a codimension $2$ foliation on $\widehat{W}$ is $L$-simple, if all positive/negative ends of holomorphic leaves are the local foliation given above near all simple Reeb orbits of periods at most $L$ that are contained in the asymptotic of the leaves. 
\end{defn}

\begin{theorem}\label{thm:intersection_foliation}
    \Cref{thm:intersection} holds for leaves in an $L$-simple foliation.
\end{theorem}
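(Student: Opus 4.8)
The plan is to run the proof of \Cref{thm:intersection} essentially verbatim, with the distinguished holomorphic hypersurface $\widehat{H}$ replaced by a leaf $\cL$ of the $L$-simple foliation (which already lives in $\widehat{W}$, so carries no ``completion'' hat). Thus I read the statement as: for a holomorphic curve $u$ with finite Hofer energy, $\ima u\not\subset\cL$, and all asymptotic orbits of $u$ lying on $\cL$ of period at most $L$, one has $u\bullet\cL:=u_\tau\cdot\cL=\sum_{u(p)\in\cL}\delta(p,u,\cL)+\sum_{p\in\Gamma^+\cup\Gamma^-}\delta_\infty(p,u,\cL)$. First I would isolate the only two properties of $\widehat{H}$ that the proof of \Cref{thm:intersection} actually uses: (i) $\widehat{H}$ is a $J$-holomorphic, hence symplectic, codimension $2$ submanifold, which gives finiteness and strict positivity of the interior intersection points; and (ii) near every asymptotic orbit $\gamma'$ of $u$ lying on it, the Cauchy--Riemann equation for $u$ decouples a normal component obeying the linear equation \eqref{eqn:linearCR_perp} with self-adjoint asymptotic operator $A_f=-J\frac{\rd}{\rd t}+\Hess_f$, so that the spectral expansion \eqref{eqn:asym} and the winding dichotomy of \Cref{prop:wind} are available. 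Everything else — the push-off $u_\tau$, the identity $u\bullet H=u_\tau\cdot\widehat{H}$, and the accounting of $u_\tau\cdot\widehat{H}$ into $\delta$'s and $\delta_\infty$'s — is formal once (i) and (ii) hold. A leaf satisfies (i) trivially, so all the content is in verifying (ii) relative to $\cL$.

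The key new input is a linearity trick. Fix a puncture $p$ of $u$ whose asymptotic orbit $\gamma'$ lies on $\cL$ and has period at most $L$, so $\gamma'=\gamma^k$ for a simple orbit $\gamma$ of period $\le L$; the $L$-simple hypothesis on the foliation says that in the coordinates $\R_\pm\times\Sphere^1_t\times\disk^{n-1}_\delta\times\disk_\delta$ of \S\ref{ss:intersection}, $\cL$ is the graph $\{\eta_\perp=\nu(s)\}$ of a gradient flow line $\nu$ of $f$ with $\nu(s)\to 0$ along that end, while $u(s,t)=(s,t,\eta(s,t),\eta_\perp(s,t))$ has normal component $\eta_\perp$ solving the second equation of \eqref{eqn:linearCR_perp}. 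Since $f$ is a pure quadratic, $\nu$ is itself a bounded, $t$-independent solution of that same linear equation, so by linearity the normal component of $u$ \emph{measured against the leaf}, $\widetilde\eta_\perp:=\eta_\perp-\nu$, also solves it, with the \emph{same} self-adjoint operator $A_f$. Hence $\widetilde\eta_\perp$ has an asymptotic expansion of the form \eqref{eqn:asym} in eigenfunctions of $A_f$; and unless $\widetilde\eta_\perp\equiv 0$ near $p$ — which by unique continuation would force $\ima u\subset\cL$, contrary to hypothesis — it is nowhere zero near $p$, with a well-defined leading eigenfunction and winding number $\wind_p(u)$, and \Cref{prop:wind} applies to $A_f$ unchanged. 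I would then define $u_\tau$ by pushing $u$ off in the direction of the top eigenfunction $\eta_{-1}$ of $A_f$ near positive punctures and of $\eta_1$ near negative punctures (these depend only on $f$ and $\gamma$, not on whether the reference hypersurface is $\widehat{H}$ or the leaf), set $\delta_\infty(p,u,\cL)=\wind(\eta_{-1})-\wind_p(u)$, resp.\ $\wind_p(u)-\wind(\eta_1)$, which are $\ge 0$ by \Cref{prop:wind}, take $\delta(p,u,\cL)>0$ at interior intersection points by positivity of intersection, let punctures with asymptotic orbit not on $\cL$ contribute $\delta_\infty=0$, and read off the formula by summing the contributions to $u_\tau\cdot\cL$ exactly as in \Cref{thm:intersection}.

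The hard part — indeed the only place where anything genuinely new happens — is precisely this replacement of $\eta_\perp$ by $\widetilde\eta_\perp=\eta_\perp-\nu$. A leaf of the foliation is not the flat model $\{\eta_\perp=0\}$ but a nontrivial solution $\nu(s)$ of \eqref{eqn:linearCR_perp}, and one cannot absorb $\nu$ by a biholomorphism of the local model: subtracting $\nu(s)$ mixes $\partial_s$ into the normal directions and destroys $J\partial_s=R_Y$. What rescues the argument is that the normal-to-the-leaf data nonetheless obeys the same self-adjoint asymptotic equation, which is all that the Siefring asymptotics (the analogue of \cite[Theorem 3.11]{Wendlbook}) and \Cref{prop:wind} require. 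As a consistency check against \Cref{ex:eigen}: there the leaf is asymptotically $\nu(s)=e^{-2\epsilon s}\nu(0)$, i.e.\ $e^{a_{-1}s}$ times a constant (winding $0=\wind(a_{-1})$) eigenfunction, so $\wind_p(\cL)=\wind(a_{-1})$ and a foliation leaf carries no hidden self-intersection, exactly as it should. Setting $\nu\equiv0$ recovers \Cref{thm:intersection} itself, so this is a bona fide generalization.
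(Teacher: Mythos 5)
Your proposal is correct and follows essentially the same route as the paper: the paper's proof likewise observes that $\nu(s)=e^{as}\nu_0$ with $a$ an eigenvalue of $\Hess_f$, so $\nu$ is itself of the form \eqref{eqn:asym}, hence $\eta_{\perp}-\nu$ again admits such an expansion and the argument for \Cref{thm:intersection} goes through with the winding of the leading term of $\eta_{\perp}-\nu$ defining the hidden intersection. Your additional remarks (linearity of the gradient-flow/Cauchy--Riemann equation, the unique continuation step, and the consistency check against \Cref{ex:eigen}) are correct elaborations of the same idea.
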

The meaning of the hidden intersection at infinity in \Cref{thm:intersection_foliation} will be clear from the proof below.
\begin{proof}[Proof of \Cref{thm:intersection_foliation}]
    Note that $\nu(s) = e^{as}\nu_0$, where $a$ is an eigenvalue of $\Hess_f$ and $\nu_0$ is an eigenvector. In particular, $\nu(s)$ is also in the form of the asymptotic expansion \eqref{eqn:asym}. The hidden intersection of $u$ with the leaf determined by $\nu(s)$ can be computed using the winding number of the leading term of $\eta_{\perp}-\nu$, where $\eta_{\perp}$ is the normal component of $u$ near punctures that are asymptotic to Reeb obits in the leaves. As $\eta_{\perp}-\nu$ has also the form \eqref{eqn:asym}, the proof for \Cref{thm:intersection} goes through.
\end{proof}

\begin{cor}
    Assuming there is a holomorphic hypersurface foliation as above and $u$ is a holomorphic curve such that all asymptotic Reeb orbits that are asymptotic to leaves have periods at most $L$, if $u\bullet F\le 0$ for any leaf $F$, then $u$ is contained in a leaf. 
\end{cor}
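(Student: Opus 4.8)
The plan is to reduce the corollary to \Cref{thm:intersection_foliation} together with the nonnegativity of all the local and hidden contributions appearing there. Fix a leaf $F$ of the $L$-simple foliation and suppose $u \bullet F \le 0$. First I would observe that by \Cref{thm:intersection_foliation}, applied with the holomorphic hypersurface $H$ replaced by the leaf $F$ (which is itself a holomorphic hypersurface, whose positive/negative ends over simple Reeb orbits of period at most $L$ are exactly the local gradient-flow foliation leaves), we have
$$
u\bullet F = \sum_{p\in\Sigma,\, u(p)\in F}\delta(p,u,F) + \sum_{p\in\Gamma^+\cup\Gamma^-}\delta_\infty(p,u,F),
$$
\emph{provided} $\ima u \not\subset F$, with every summand nonnegative ($\delta(p,u,F)>0$ at honest intersection points, $\delta_\infty(p,u,F)\ge 0$ by \Cref{prop:wind} and the expansion \eqref{eqn:asym}, as explained in the proof of \Cref{thm:intersection_foliation}). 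Hence if $\ima u \not\subset F$ the hypothesis $u\bullet F \le 0$ forces $u\bullet F = 0$ and in fact forces every term to vanish: there are no honest intersection points and no hidden intersections. So $\ima u \cap F = \emptyset$ after the push-off, i.e. (as in \Cref{cor:leaf}) either $\ima u \subset F$ or $\ima u \cap F = \emptyset$, and we are in the latter case.

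The remaining work is therefore to rule out the alternative ``$u$ is disjoint from every leaf'' and conclude that $u$ lies in a leaf. Here I would use that the leaves $F$ \emph{foliate} $\widehat W$: through every point of $\widehat W$ there passes a leaf. Pick any point $x = u(z_0)$ in the image of $u$ and let $F_0$ be the leaf through $x$. Then $u$ and $F_0$ share the point $x$, so $\ima u \cap F_0 \neq \emptyset$; by the dichotomy of the previous paragraph (applied to $F = F_0$), since we cannot have disjointness, we must have $\ima u \subset F_0$. That is exactly the conclusion.

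The one genuine subtlety — and the step I expect to be the main obstacle to write cleanly — is the transition from ``$u_\tau \cdot F = 0$ with all nonnegative contributions vanishing'' to ``$\ima u \cap F = \emptyset$'' in the sense needed to then feed a shared interior point back in. Concretely: when $x=u(z_0)$ lies on the leaf $F_0$, the point $z_0$ is one of the $p$'s with $u(p)\in F_0$, so $\delta(p,u,F_0)>0$ is one of the summands; for $u\bullet F_0 \le 0$ with all terms $\ge 0$ this is only possible if in fact $\ima u \subset F_0$ — so strictly speaking one does not even need to separately invoke disjointness, one applies \Cref{thm:intersection_foliation} directly to $F_0$ and the positivity $\delta(z_0,u,F_0)>0$ immediately contradicts $u\bullet F_0\le 0$ unless $\ima u\subset F_0$. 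I would present the argument this way to avoid the push-off/disjointness detour: fix the leaf through a chosen point of $\ima u$, and note that \Cref{thm:intersection_foliation} plus positivity of intersection forces containment. The only thing to check carefully is that an interior intersection point of $u$ with $F_0$ that happens to sit over a puncture (i.e. in the cylindrical end) is still accounted for by a strictly positive $\delta$ or by a strictly positive $\delta_\infty$; this follows from the proof of \Cref{thm:intersection_foliation}, where the end behavior is controlled by the winding number of the leading term of $\eta_\perp - \nu$, which is $\ge \wind(\eta_{\pm 1}\text{-type bound})$ with equality characterizing transversality — so a tangency or a higher-order contact at infinity still contributes positively. With that understood, $u \bullet F_0 \le 0$ is incompatible with $\ima u \not\subset F_0$, and the corollary follows.
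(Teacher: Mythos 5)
Your proposal is correct and follows essentially the same route as the paper: the paper's proof also picks a leaf $F$ meeting $\ima u$ (which exists because the leaves cover $\widehat{W}$) and notes that if $\ima u\not\subset F$ then \Cref{thm:intersection_foliation} together with positivity of the local intersection at the shared point forces $u\bullet F\ge 1$, contradicting the hypothesis. Your streamlined second version (apply the theorem directly to the leaf through a chosen point of $\ima u$) is exactly the paper's argument, just written out with the push-off/disjointness dichotomy made explicit.
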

\begin{proof}
    If $u$ is not contained in any leaf, then $u$ must intersect one leaf $F$ and is not contained in $F$. By \Cref{thm:intersection_foliation}, we have $u\bullet F\ge 1$, contradicting the assumption.
\end{proof}
As $u\bullet F$ is topological, oftentimes, one can argue that $u\bullet F$ is independent of the leaf or only depends on finitely many different types of leaves. Therefore, the condition that $u\bullet F\le 0$ for any leaf $F$ is reasonable to check.

\section{Contact pairs and their $L$-simple setup}\label{s3}
\subsection{$L$-simple contact forms}\label{ss:L-simple-contact}
We describe the contact pairs that we will consider in this paper and present them in an $L$-simple manner. Throughout this paper, the ambient contact manifold is always in the form of $\partial(V\times \disk)$, where $V$ is a Liouville domain. Let $\lambda$ denote a Liouville form on the Liouville domain $V$, such that the Reeb orbits on $\Gamma:=\partial V$ have very large period. We use $\alpha_{\Gamma}$ denote the restriction of $\lambda$ to $\Gamma$ and $R_{\Gamma}$ is the Reeb vector field. We use $r$ to denote the collar coordinate on $\Gamma$, such that the completed Liouville manifold $(\widehat{V},\widehat{\lambda})$ is given by $V\cup \Gamma \times (1,\infty)_r$ with $\widehat{\lambda}=\lambda$ on $V$ and $\widehat{\lambda}=r\alpha_{\Gamma}$ on $\Gamma \times (1,\infty)_r$. Note that by following the negative flow of the Liouville vector field $r\partial_r$, the $r$ coordinate continues to exist in the interior of $V$ for $r\in (0,1)$. We first fix a Morse function $f$ on $V$, such that
\begin{itemize}
    \item $f$ is $C^2$ close to $1$;
    \item $\frac{\partial f}{\partial r}<0$ near $r=1$.
\end{itemize}
We can extend $f$ outside $V$ to $\Gamma \times (1,1+\epsilon)$ such that 
\begin{itemize}
    \item $f$ only depends on $r$ and  $\frac{\partial f}{\partial r}<0$;
    \item $f^{-1}$ can be extended over $\displaystyle \lim_{t\to 1+\epsilon}f(t)$ by the constant $1+\epsilon$ smoothly;
    \item $\lim_{r\to 1+\epsilon}f(r)=1$.
\end{itemize}
The hypersurface $Y_f$ in $(\widehat{V}\times \C, \widehat{\lambda}+\rho^2\rd \theta)$ given by the union of $\rho^2=f(x)$ and $r=1+\epsilon$, where $\rho$ is the Euclidean radius on $\C$. By our construction, $Y_f\simeq \partial(V\times \disk)$ is a smooth hypersurface with an induced contact form $\alpha$ from the restriction of $\widehat{\lambda}+\rho^2\rd \theta$. We use $Y_V$ to denote the $V\times \Sphere^1$ part, which is the graph of $\rho^2=f$ on $V$, with contact form $f\rd \theta + \lambda$ and $Y_\disk$ the $\Gamma \times \{1+\epsilon\} \times \disk$ with contact form $(1+\epsilon)\alpha_{\Gamma}+\rho^2\rd \theta$. And $Y_\cap$ is the complement of $Y_V,Y_{\disk}$, i.e.\ the graph of $\rho^2=f$ on $(1,1+\epsilon)_r$, with contact from $r\alpha_{\Gamma}+f\rd\theta$.

The Reeb vector field over $Y_V$ and $Y_\cap$, under the parametrization from $V\times \Sphere^1$ and $(1,1+\epsilon)\times \Gamma \times \Sphere^1$ (instead of viewing as a subset in $\widehat{V}\times \C$), is given by 
$$R=(f-\widehat{\lambda}(X_f))^{-1}(\partial_{\theta}-X_f)$$
where $X_f$ is the Hamiltonian vector field for symplectic form $\rd \widehat{\lambda}$ with the convention $\iota_{X_f}\rd \widehat{\lambda}=-\rd f$. 

Let $p$ be a critical point of $f$ on $V$. We can assume that there is a neighborhood $N(p)\subset V$, such that there is a diffeomorphism $(N(p),\lambda)\to (B^{2n}_{\epsilon}(0)\subset \R^{2n}, \frac{1}{2}\sum (x_i\rd y_i-y_i\rd x_i))$ preserving the Liouville forms. We assume that $f$ using this coordinate is a constant plus a pure quadratic function with a small Hessian. Following \cite[\S 6]{ADCI}, as $f-1$ is $C^2$ small on $V$ and $\Gamma=\partial V$ has long Reeb orbits, all Reeb orbits on $Y_f$ are either
\begin{enumerate}
    \item are multiple covers of the $\Sphere^1$ fiber over a critical point $p$ of $f$ on $V$;
    \item have long period, the threshold depends on the $C^2$-smallness of $f-1$ and the minimal period of Reeb orbits on $\Gamma$.
\end{enumerate}
We use $\gamma_p$ to denote the simple Reeb orbit over the critical point $p$. We can assume that all Reeb orbits with period $<4\pi$ are those $\gamma_p$ orbits. Near the Reeb orbit $\gamma_p$, the contact form on $\gamma_p\times N(p)\simeq \Sphere^1\times B^{2n}_{\epsilon}(0)$ is $f\rd \theta + \frac{1}{2}\sum (x_i\rd y_i-y_i\rd x_i)$, which is precisely in the $L$-simple form after we reparameterize the $\theta$-coordinate.

We will consider two types of contact submanifolds in $\partial(V\times \disk)$. 
\begin{enumerate}
    \item  The binding of  $\partial(V\times \disk)$ viewed as the trivial open book with page $V$, i.e.\ $\Gamma \times \{0\} \subset \partial(V\times \disk)$.
    \item  Given a codimension $2$ symplectic submanifold $U\subset V$, $H:=\partial(U\times \disk)\subset Y=\partial (V\times \disk)$ is a contact submanifold. We can choose the Liouville form on $V$, such that the Liouville vector field is tangent to $U$ near the boundary.  We require that $f$, such that the critical points of $f|_U$ are also critical points of $f$ on $V$. Moreover, near each critical point $p$ of $f|_U$, $(V,U,\lambda,f)$ is locally isomorphic to $(B^{2n}_{\epsilon}(0), B^{2n-2}_{\epsilon}(0), \frac{1}{2}\sum (x_i\rd y_i-y_i\rd x_i), Q-\epsilon(x_n^2+y_n^2))$, where $Q$ is a quadratic function in $x_1,\ldots,x_{n-1},y_1,\ldots,y_{n-1}$ without linear terms and $\epsilon>0$. Similar to the $Y_f$ construction, using $f_U$, we have a contact hypersurface $H_{f|_U}\subset Y_f$. It is clear from the construction that near all simple Reeb orbits in $\Sphere^1\times U \subset H_{f|_U}$, the contact form on $(Y_f,H_{f|_U})$ is $L$-simple as in \Cref{ss:intersection}. In particular, we have the intersection theory if we use an $L$-simple almost complex structure.
\end{enumerate}

\subsection{A compatible almost complex near the binding}\label{ss:acs_binding}
We say an $s$-invariant almost complex structure $J$ on $\widehat{Y}=\R_s\times Y$ is \textbf{compatible} if $J(R)=-C\partial_s$ for $C>0$ and $J$ preserve the contact structure where $J$ is compatible with the symplectic form $\rd\alpha$. With a bit of misuse of terminology, we will call $\alpha$-tame almost complex structure that is $C^\infty$ close to a compatible one compatible as well, e.g.\ the almost complex structures described in \S \ref{ss:intersection}. A complex structure on $\widehat{W}$--the completion of a Liouville domain--is compatible if it is compatible on the positive end of $\widehat{W}$ as the end of the symplectization and is tamed by the symplectic form on $W$. In the following, we describe a compatible almost complex structure on $\widehat{Y}_{\disk}$ with nice properties that will help us confine the topology of holomorphic/Floer cylinders.

We fix a compatible almost complex structure $J_{\Gamma}$ on $\Gamma$. Note that the contact structure on $Y_{\disk}$ is given by
$$\xi_{\Gamma}\oplus \left\langle \partial_x+\frac{y}{1+\epsilon}R_{\Gamma}, \partial_y-\frac{x}{1+\epsilon}R_{\Gamma}\right\rangle$$
where $\xi_{\Gamma},R_{\Gamma}$ are the contact structure and Reeb vector field on $\Gamma$ from the contact form $\alpha_{\Gamma}$. The Reeb vector $R$ is given by $(1+\epsilon)^{-1}R_{\Gamma}$. We consider the following compatible almost complex structure $J$ on $\widehat{Y}_{\disk}$:
\begin{equation}\label{eqn:J_good}
    J|_{\xi_{\Gamma}}=J_{\Gamma}, \quad J\left(\partial_x+\frac{y}{1+\epsilon}R_{\Gamma}\right)=\partial_y-\frac{x}{1+\epsilon}R_{\Gamma}, \quad JR=-C\partial_s, C>0.
\end{equation}
We will call such an almost complex structure good on $Y_{\disk}$.

\begin{proposition}\label{prop:d}
    Let $\Sigma$ be a subdomain of $\R_s\times \SS^1_t$, $H$ a Hamiltonian on $\widehat{Y}_{\disk}$ depending only on the cylindrical coordinate $s$ and $J$ is good as above. Then any solution $u:\Sigma \to \widehat{Y}_{\disk}$ of $\partial_su+J(\partial_tu-X_H)=0$, we have 
    $$\int u^*\alpha\ge \int (\pi_{\disk}\circ u)^*(\rd(\rho^2\rd \theta))\ge 0$$
    and $\pi_{\disk}\circ u:\Sigma \to \disk$ is holomorphic, where $\pi_{\disk}:\widehat{Y}_{\disk}\to Y_{\disk}\to \disk$ is the projection. 
\end{proposition}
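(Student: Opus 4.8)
The plan is to decompose the Floer/holomorphic equation into the $\Gamma$-direction and the $\disk$-direction and exploit the fact that the good almost complex structure $J$ from \eqref{eqn:J_good} makes this decomposition clean. First I would write $u = (u_\Gamma, u_\disk)$ under the identification $Y_\disk \cong \Gamma \times \disk$ (with the contact structure splitting recorded above), and observe that since $H$ depends only on $s$, the Hamiltonian vector field $X_H$ is a multiple of the Reeb vector field $R = (1+\epsilon)^{-1} R_\Gamma$, hence lies in the $\R_s \oplus \langle R_\Gamma\rangle$ factor and has no component along the two vectors $\partial_x + \tfrac{y}{1+\epsilon}R_\Gamma$, $\partial_y - \tfrac{x}{1+\epsilon}R_\Gamma$ spanning the $\disk$-directions of $\xi$. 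Because $J$ sends the first of these two vectors to the second (and is $s$-invariant and sends $R$ to $-C\partial_s$), the splitting $T\widehat{Y}_\disk = (\R_s \oplus \langle R_\Gamma\rangle \oplus \xi_\Gamma) \oplus \langle \partial_x + \tfrac{y}{1+\epsilon}R_\Gamma,\ \partial_y - \tfrac{x}{1+\epsilon}R_\Gamma\rangle$ is $J$-complex, and $X_H$ lives in the first summand. Projecting the equation $\partial_s u + J(\partial_t u - X_H) = 0$ to the $\disk$-summand, the $X_H$ term drops out and one is left with $\partial_s(\pi_\disk\circ u) + J_0 \partial_t(\pi_\disk\circ u) = 0$; here the key point is that $\pi_\disk$ intertwines $J$ restricted to this summand with the standard $J_0$ on $\disk$, because the two spanning vectors project to $\partial_x, \partial_y$ and $J$ rotates them as $J_0$ does. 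Hence $\pi_\disk \circ u$ is $J_0$-holomorphic.

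For the energy inequality, I would first note the pointwise identity $u^*\alpha = u^*\big((1+\epsilon)\alpha_\Gamma + \rho^2\,\rd\theta\big)$, and then use that for the holomorphic (in particular, conformal) map $\pi_\disk\circ u$ one has $(\pi_\disk\circ u)^*(\rho^2\,\rd\theta) \wedge (\text{stuff}) \ge 0$ in the appropriate sense — more precisely, I would instead integrate $\rd$ of the form and use Stokes together with the asymptotic/boundary behavior, but the cleanest route is: since $\pi_\disk \circ u$ is $J_0$-holomorphic, $(\pi_\disk\circ u)^* \rd(\rho^2\,\rd\theta)$ is a nonnegative area form on $\Sigma$ (it equals $(\pi_\disk\circ u)^*(\omega_{\std})$ and $J_0$ tames $\omega_{\std}$), giving the second inequality $\int (\pi_\disk\circ u)^*\rd(\rho^2\,\rd\theta)\ge 0$. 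For the first inequality, I would show $u^*\rd\alpha \ge (\pi_\disk\circ u)^*\rd(\rho^2\rd\theta)$ pointwise as a comparison of $2$-forms on $\Sigma$: writing $\rd\alpha = (1+\epsilon)\rd\alpha_\Gamma + \rd(\rho^2\rd\theta)$ and using that on a solution of the Floer equation with $J$ of the given form, $u^*\big((1+\epsilon)\rd\alpha_\Gamma\big)$ is itself $\ge 0$ because it computes (a positive multiple of) the energy density of $u_\Gamma$ paired through the $\xi_\Gamma$-block where $(\rd\alpha_\Gamma, J_\Gamma)$ is compatible, together with the $\rd s \wedge \rd t$ contribution which is controlled by $|X_H|$; then integrate. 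Actually the slicker statement to prove is $\int u^*\alpha = \int u^* \rd\alpha + (\text{boundary terms which vanish or are nonnegative})$ is not needed — it suffices to observe $\int_\Sigma u^*\rd\alpha \ge \int_\Sigma (\pi_\disk\circ u)^*\rd(\rho^2\rd\theta)$ and that this is what the statement asserts once one identifies $\int u^*\alpha$ appropriately via the usual energy computation for solutions with Hamiltonian depending only on $s$.

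The main obstacle I expect is the bookkeeping in the first inequality: one must carefully track the $\rd s\wedge\rd t$ component of $u^*\rd\alpha$ against the Hamiltonian term $X_H$, and verify that the good form of $J$ (in particular $JR = -C\partial_s$ with $C>0$ and the compatibility of $J_\Gamma$ with $\rd\alpha_\Gamma$ on $\xi_\Gamma$) forces the $\Gamma$-part of the integrand $u^*\big((1+\epsilon)\rd\alpha_\Gamma\big)$ plus the $s$-$t$ part to be nonnegative — this is the standard ``energy of a Floer cylinder is nonnegative'' computation but now relativized to the splitting, and the only real content is checking no cross-terms between the $\disk$-block and the rest spoil positivity, which is exactly what the $J$-complex splitting above guarantees. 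The holomorphicity of $\pi_\disk\circ u$ is the easy half and follows directly from the projection argument; the energy estimate is then essentially formal given the pointwise $2$-form comparison, so I would spend the bulk of the write-up on making the decomposition of $u^*\rd\alpha$ into manifestly nonnegative pieces explicit.
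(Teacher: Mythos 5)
Your proposal is correct and follows essentially the same route as the paper: project the equation to the $\disk$-factor (where $X_H$ disappears because it is parallel to $R$, and $\pi_{\disk}$ intertwines $J$ with $\mathbf{i}$) to get holomorphicity, then split $u^*\rd\alpha$ into the $\xi_{\Gamma}$-block and the $\disk$-block, each of which is pointwise nonnegative. The only simplification you missed is that the $\rd s\wedge\rd t$/Hamiltonian cross-term you worry about vanishes identically, since $\iota_{X_H}\rd\alpha=0$ and $\iota_{X_H}\rd\alpha_{\Gamma}=0$ for $X_H$ parallel to the Reeb field, so no estimate in terms of $|X_H|$ is needed.
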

\begin{proof}
Note that $\pi_{\disk}$ is $(J,\mathbf{i})$ holomorphic, where $\mathbf{i}$ is the standard complex structure on $\disk$. The assumption on $H$ implies that $X_H$ is parallel to $R=(1+\epsilon)^{-1}R_{\Gamma}$, which is projected to zero by $\pi_{\disk}$ (killed in the first projection of the composition). It is clear that $\pi_{\disk}\circ u$ is holomorphic. 

Note that 
$$u^*\rd \alpha(\partial_su,\partial_tu) = \rd\alpha (\partial_s u, J\partial_su)=(1+\epsilon)\rd\alpha_{\Gamma}(\pi_{\xi_{\Gamma}}\partial_s u,\pi_{\xi_{\Gamma}}J\partial_su)+\rd\left(\rho^2\rd \theta \right)(\pi_{\disk}\partial_su,\pi_{\disk} J\partial_su)$$
where $\pi_{\xi_{\Gamma}}$ is the natural projection to $\xi_{\Gamma}$ and $\pi_{\disk}$, by a bit abuse of notation, also denotes the natural projection to the $\disk$-direction on the tangent bundle. Since 
$$ \rd\alpha_{\Gamma}(\pi_{\xi_{\Gamma}}\partial_s u,\pi_{\xi_{\Gamma}}J\partial_su)=\rd\alpha_{\Gamma}(\pi_{\xi_{\Gamma}}\partial_s u,J\pi_{\xi_{\Gamma}}\partial_su)\ge 0$$
and 
\begin{eqnarray*}
    \rd\left(\rho^2 \rd \theta \right)(\pi_{\disk}\partial_su,\pi_{\disk} J\partial_su) & = & \rd\left(\rho^2\rd \theta \right)(\pi_{\disk}\partial_su,\mathbf{i}\pi_{\disk} \partial_su) \\
    & = & \rd\left(\rho^2\rd \theta \right)(\partial_s(\pi_{\disk}\circ u),\mathbf{i}\partial_s(\pi_{\disk}\circ u)) \\
    & = & \rd\left(\rho^2\rd \theta \right)(\partial_s(\pi_{\disk}\circ u),\partial_t(\pi_{\disk}\circ u))\\
    & = & (\pi_{\disk}\circ u)^* \rd\left(\rho^2\rd \theta \right)(\partial_su,\partial_tu)\ge 0.
\end{eqnarray*}
\end{proof}

\subsection{An $L$-simple holomorphic foliation}\label{ss:foliation}
We will describe a holomorphic foliation from \cite{avdek2024bourgeois}, which will help us confine the homology class of holomorphic planes in $\widehat{V\times \disk}$. Such an almost complex will have to be $\alpha$-tame $L$-simple instead of being compatible.

First, we fix a compatible almost complex structure $J_V$ on $\widehat{V}$. We then describe the almost complex structure $J^F$ on the positive end $\R_{+}\times Y_f$ of $\widehat{V\times \disk}$ by separating it into three regions $Y_V,Y_{\disk}$ and $Y_{\cap}$.
\subsubsection{$J^F$ on $Y_V$}
The contact hypersurface is given by $\rho^2=f$, where $f-1$ is $C^2$-small. We parametrize $Y_V$ by $V\times \Sphere^1$, then the Reeb vector field is
$$R=(f-\lambda(X_f))^{-1}(\partial_{\theta}-X_f).$$
The almost complex structure on $Y_V$ is described by 
$$J^F(R)=-C\partial_s,C>0, \quad J^F|_{TV}=J_V.$$
The purpose of positive constant $C$ is to make sure $J^F$ is tamed by $\rd(e^s\alpha)$, which holds for $C\ll 1$ by \cite[Lemma 3.2]{avdek2024bourgeois}. We require the almost complex structure $J_V$ near the critical point $p$ of $f$ to be standard in the local Darboux chart to set up the $L$-simple contact form in \S \ref{ss:L-simple-contact}. Then by \Cref{rmk:ana}, we have enough to set up the asymptotic analysis and Fredholm properties for curves asymptotic to $\gamma_p$.

The following proposition is straightforward from construction.
\begin{proposition}
$J^F$ is $\alpha$-tame for $\xi_{J^F}=TV$ and $4\pi$-simple.    
\end{proposition}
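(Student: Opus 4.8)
The plan is to verify the two assertions directly from the way $J^F$ was constructed over the region $Y_V$: that it is $\alpha$-tame with $\xi_{J^F}=TV$ in the sense of \Cref{def:tame}, and that it is $4\pi$-simple in the sense of \cite[Definition 3.1.4]{BH}. Both are immediate once one unwinds the definitions, and I expect the only point needing a word of care to be a harmless normalization constant.

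For $\alpha$-tameness I would run through the four conditions of \Cref{def:tame} in order. Invariance in $s$ is automatic, since the defining relations $J^F|_{TV}=J_V$ and $J^F(R)=-C\partial_s$ do not refer to $s$. Applying $J^F$ once more to $J^F(R)=-C\partial_s$ gives $J^F(\partial_s)=C^{-1}R$, so condition (2) holds with $F_{J^F}=C^{-1}>0$. For condition (3) I take $\xi_{J^F}:=TV$, the corank-one distribution on $Y_V=V\times\Sphere^1$ tangent to the $V$-slices; since the Reeb field $R=(f-\lambda(X_f))^{-1}(\partial_\theta-X_f)$ has nonvanishing $\partial_\theta$-component we have $TY_V=\R R\oplus TV$, and $J^F(TV)=J_V(TV)=TV$. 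For condition (4), the contact form on $Y_V$ is $\alpha=f\,\rd\theta+\lambda$, so $\rd\alpha=\rd f\wedge\rd\theta+\rd\lambda$; since $\rd\theta$ annihilates $TV$ this restricts to $\rd\alpha|_{TV}=\rd\lambda|_{TV}$, the symplectic form of $V$, and compatibility of $J_V$ with $\rd\lambda$ gives $\rd\alpha(v,J^Fv)=\rd\lambda(v,J_Vv)>0$ for every nonzero $v\in TV$.

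For $4\pi$-simplicity I would invoke \S\ref{ss:L-simple-contact}: every Reeb orbit of $Y_f$ of period below $4\pi$ is one of the simple orbits $\gamma_p$ over a critical point $p$ of $f$ on $V$, and in the Darboux chart $\gamma_p\times N(p)\simeq\Sphere^1\times B^{2n}_\epsilon(0)$ the contact form is $f\,\rd\theta+\frac{1}{2}\sum(x_i\,\rd y_i-y_i\,\rd x_i)$, i.e.\ an $L$-simple form with $Q=f-1$ a pure quadratic of small Hessian, after rescaling the $\theta$-coordinate. Since $J_V$ was chosen to be the standard complex structure in that chart, near $\gamma_p$ the almost complex structure $J^F$ is standard on $B^{2n}_\epsilon(0)$, with $\xi_{J^F}=TB^{2n}_\epsilon(0)$ and $J^F(\partial_s)=C^{-1}R$ a positive constant multiple of the Reeb field; this is exactly a $4\pi$-simple almost complex structure in the sense of \cite[Definition 3.1.4]{BH}, so the asymptotic Cauchy--Riemann operator near $\gamma_p$ takes the linear form \eqref{eqn:linearCR}.

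The one genuinely delicate bookkeeping point is this constant $C$: the literal $L$-simple normalization of \cite[Definition 3.1.4]{BH} has $J(\partial_s)=R_Y$, whereas here $J^F(\partial_s)=C^{-1}R$ with $C\ll 1$ (the smallness being what \cite[Lemma 3.2]{avdek2024bourgeois} uses to tame $J^F$ by $\rd(e^s\alpha)$). I expect this to be harmless: rescaling the symplectization coordinate near $\gamma_p$ by the constant $C$ puts $J^F\partial_s=R_Y$ on the nose, and more directly, a positive constant in front of the Reeb field only rescales the eigenvalues of the asymptotic operator --- it changes neither the linear shape \eqref{eqn:linearCR}, nor the winding monotonicity of \Cref{prop:wind}, nor the conclusion of \Cref{thm:intersection} and the intersection theory of \S\ref{ss:intersection}. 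Everything else is simply unwinding the definitions.
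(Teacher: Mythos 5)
Your verification is correct and is exactly the unwinding of the construction that the paper has in mind — indeed the paper offers no proof at all, declaring the proposition "straightforward from construction," and your check of the four tameness conditions plus the local $L$-simple model near each $\gamma_p$ is the intended argument. Your worry about the constant $C$ is also already absorbed by the original formulation of \cite[Definition 3.1.4]{BH}, which (as the paper's footnote notes) allows $J\partial_s=gR_Y$ for a positive function $g$, so $J^F(\partial_s)=C^{-1}R$ is admissible without any rescaling.
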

Let $\cO$ be a neighborhood of $\partial V$ in $V$ where $f$ only depends on $r$, we view $\cO \times S^1$ as subset of $\widehat{V}\times \C$ using the graph of $f$, and write $p=\rho^2$. Then we have
$$\xi_{J^F} = \xi_{\Gamma} \oplus \langle \partial_r+f'(r)\partial_p, R_{\Gamma} \rangle$$
as a subspace of $T(\widehat{V}\times \C)$.

\subsubsection{$J^F$ on $Y_{\disk}$}
Over $Y_{\disk}$, we pick a good almost complex structure $J^F$ using $J_V$. Hence $J^F$ is compatible, in particular, it is $\alpha$-tame with 
$$\xi_{J^F}=\xi=\xi_{\Gamma}\oplus \left\langle \partial_x+\frac{y}{1+\epsilon}R_{\Gamma}, \partial_y-\frac{x}{1+\epsilon}R_{\Gamma}\right\rangle.$$

\subsubsection{$J^F$ on $Y_\cap$}\label{sss:J^F}
Over $Y_\cap$, with the parametrization from $(1,1+\epsilon)\times \Gamma \times \SS^1$, the Reeb vector is 
$$R=(f(r)-rf'(r))^{-1}(\partial_{\theta}-f'(r)R_{\Gamma}).$$
As $f$ only depends on $r$, $R_{\Gamma}$ does not have $\partial_r$ component, the above formula can also be viewed as a vector in $T(\widehat{V}\times \C)$ when we view $Y_\cap$ as a subset in $\widehat{V}\times \C$. The contact structure $\xi$ is 
$$\xi = \xi_{\Gamma}\oplus \langle r\partial_{\theta}-f(r)R_{\Gamma}, \partial_r \rangle.$$
When we view $\xi$ as a subspace from the embedding, we have
$$\xi=\xi_{\Gamma}\oplus \left\langle R_{\Gamma}-\frac{r}{f(r)}\partial_{\theta}, \partial_r+f'(r)\partial_p \right\rangle.$$
Now we choose a function $h:(1,1+\epsilon) \to \R_{\ge 0}$ such that $h'\ge 0$, $h$ is $0$ near $1$ and is $1$ near $1+\epsilon$, then we define 
$$\xi_{J^F}:=\xi_{\Gamma} \oplus \left\langle \partial_r+f'(r)\partial_p, R_{\Gamma}-\frac{h(r)r}{f(r)}\partial_{\theta}\right\rangle$$
as a subspace. Then $\xi_{J^F}$ on $Y_\cap$ interpolates between $\xi_{J^F}$ on $Y_V$ and $\xi_{J^F}=\xi$ on $Y_{\disk}$.

We define $J^F$ by 
\begin{enumerate}
    \item $J^FR=-C\partial_s$, $C>0$;
    \item $J^F|_{\xi_{\Gamma}}=J_V$;
    \item $J^F(r\partial_r+rf'(r)\partial_p)=g(r)(R_{\Gamma}-\frac{h(r)r}{f(r)}\partial_{\theta})$, where $g(r)$ is a smooth function such that $g'\ge 0$, $g=1$ near $1$ and $g=-\frac{f'(r)}{2h(r)}=-\frac{f'(r)}{2}$ for $r$ near $1+\epsilon$.
\end{enumerate}
It is clear from the definition that $J^F$ patches with the almost complex structure on $Y_V$ smoothly. It also matches with $J^F$ on $Y_{\disk}$. This follows from the fact that $J^F$ over $Y_{\disk}$ is standard on the $\disk$-component by \eqref{eqn:J_good} and the standard complex structure $\mathbf{i}$ is $\mathbf{i}\partial_{\theta}=-2p\partial_p$ using the $(p,\theta)$ coordinate. It is straightforward to check that they match up.

\begin{proposition}
    $J^F$ is $\alpha$-tame on $Y_{\cap}$. 
\end{proposition}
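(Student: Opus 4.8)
The plan is to verify the four defining conditions of $\alpha$-tameness (\Cref{def:tame}) directly for $J^F$ on $Y_\cap$ with the hyperplane field $\xi_{J^F}=\xi_\Gamma \oplus \langle \partial_r+f'(r)\partial_p, R_\Gamma-\tfrac{h(r)r}{f(r)}\partial_\theta\rangle$. Since $J^F$ is built from $s$-invariant data and $J^F R=-C\partial_s$ manifestly satisfies (1) and (2), the content is in (3) and (4). For (3), I would check that $\xi_{J^F}$ is $J^F$-invariant: it splits as $\xi_\Gamma$, on which $J^F=J_V$ preserves $\xi_\Gamma$, plus the rank-$2$ piece spanned by $e_1:=\partial_r+f'(r)\partial_p$ and $e_2:=R_\Gamma-\tfrac{h(r)r}{f(r)}\partial_\theta$. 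From definition (3) of $J^F$, $J^F(r e_1)=g(r)e_2$, so $J^F e_1=\tfrac{g(r)}{r}e_2$; applying $J^F$ again and using $(J^F)^2=-\Id$ gives $J^F e_2 = -\tfrac{r}{g(r)}e_1$, so the rank-$2$ piece is indeed invariant (here one uses $g>0$ on $Y_\cap$, which follows from $g=1$ near $r=1$, $g'\ge 0$, and $g=-f'(r)/2>0$ near $r=1+\epsilon$ since $f'<0$).

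For (4), I need $\rd\alpha(V,J^F V)>0$ for all nonzero $V\in\xi_{J^F}$, where $\alpha=\widehat\lambda+\rho^2\rd\theta$ restricted to $Y_\cap=\{\rho^2=f(r)\}$, i.e.\ $\alpha=r\alpha_\Gamma+f(r)\rd\theta$ in the $(r,\Gamma,\theta)$ parametrization. I would compute $\rd\alpha=\rd r\wedge\alpha_\Gamma+r\,\rd\alpha_\Gamma+f'(r)\,\rd r\wedge\rd\theta$ and evaluate the quadratic form $V\mapsto \rd\alpha(V,J^F V)$ on the two summands. On $\xi_\Gamma$: for $v\in\xi_\Gamma$, only the $r\,\rd\alpha_\Gamma$ term contributes (the $\rd r$ and $\rd\theta$ factors vanish on $\xi_\Gamma$), giving $r\,\rd\alpha_\Gamma(v,J_V v)>0$ since $r>1$ and $J_V$ is $\alpha_\Gamma$-compatible on $\xi_\Gamma$. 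On the rank-$2$ piece, I compute $\rd\alpha(e_1,e_2)$: the $r\,\rd\alpha_\Gamma$ term annihilates $e_1$ (which has no $\xi_\Gamma$ or $R_\Gamma$ component in the relevant pairing — actually $e_1$ is purely in the $\partial_r,\partial_p$ directions, so $\rd\alpha_\Gamma(e_1,\cdot)=0$), so $\rd\alpha(e_1,e_2)=(\rd r\wedge\alpha_\Gamma)(e_1,e_2)+f'(r)(\rd r\wedge\rd\theta)(e_1,e_2)$. Here $\rd r(e_1)=1$, $\alpha_\Gamma(e_2)=1$, $\rd\theta(e_1)=0$, $\rd\theta(e_2)=-\tfrac{h(r)r}{f(r)}$, $\rd r(e_2)=0$, so $\rd\alpha(e_1,e_2)=1+f'(r)\cdot\tfrac{h(r)r}{f(r)}$. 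Since $J^F e_1=\tfrac{g(r)}{r}e_2$, the positivity of the quadratic form on this plane reduces to $\tfrac{g(r)}{r}\big(1+\tfrac{f'(r)h(r)r}{f(r)}\big)>0$, together with the cross-terms when a general $V=ae_1+be_2$ is used, which assemble (using $\rd\alpha(e_1,e_1)=\rd\alpha(e_2,e_2)=0$ and $(J^F)^2=-\Id$) into $(a^2\tfrac{g}{r}+b^2\tfrac{r}{g})\rd\alpha(e_1,e_2)$, hence positivity of the plane-form is equivalent to $\rd\alpha(e_1,e_2)>0$, i.e.\ to $1+\tfrac{f'(r)h(r)r}{f(r)}>0$. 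This is where the smallness hypotheses enter: $f-1$ is $C^2$-small so $f'$ is small and $f\approx 1$, while $h,r$ are bounded (and $h=0$ near $r=1$), so $\big|\tfrac{f'(r)h(r)r}{f(r)}\big|<1$ holds automatically. Finally I should check that the two summands $\xi_\Gamma$ and $\langle e_1,e_2\rangle$ are $\rd\alpha$-orthogonal in the appropriate sense so that positivity on each piece gives positivity on the sum: $\rd\alpha(v,J^F w)$ for $v\in\xi_\Gamma$, $w\in\langle e_1,e_2\rangle$ — one checks $\rd\alpha(v,e_1)=0$ (no common directions) and $\rd\alpha(v,e_2)=r\,\rd\alpha_\Gamma(v,\text{proj}_{\xi_\Gamma}e_2)=0$ since $e_2$ has no $\xi_\Gamma$-component, so the quadratic form is block-diagonal.

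\textbf{Main obstacle.} The routine but delicate part is bookkeeping the embedding: $Y_\cap$ is simultaneously described as a subset of $\widehat V\times\C$ (where $\alpha=r\alpha_\Gamma+\rho^2\rd\theta$ and $e_1,e_2$ carry $\partial_p$ components) and via the intrinsic parametrization by $(1,1+\epsilon)\times\Gamma\times\Sphere^1$, and one must be careful that $\rho^2=f(r)$ is enforced so that $\rho^2\rd\theta$ pulls back to $f(r)\rd\theta$ and $\rd(\rho^2\rd\theta)$ to $f'(r)\rd r\wedge\rd\theta$; conflating the two pictures is the easy way to get a sign wrong. The genuinely substantive point, as in \cite[Lemma 3.2]{avdek2024bourgeois} and the analogous statement on $Y_V$, is that all the interpolation data ($f$ near $1$, $f'$ small, $g,h$ monotone with the prescribed endpoint behavior, and the choice $g=-f'/2$ near $r=1+\epsilon$ which is exactly what makes the matching with the $Y_\disk$ picture work) conspire to keep $\rd\alpha(e_1,e_2)>0$ throughout $Y_\cap$; I expect this to follow cleanly from the $C^2$-smallness of $f-1$ exactly as on the $Y_V$ region, so no new analytic input is needed beyond what the cited lemma already provides.
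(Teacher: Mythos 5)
Your overall strategy is the same as the paper's: split $\xi_{J^F}$ into $\xi_\Gamma$ and the rank-$2$ complement, get positivity on $\xi_\Gamma$ from compatibility of $J_V$, check the blocks are $\rd\alpha$-orthogonal, and reduce positivity on the rank-$2$ piece to a single inequality. The reduction to $\rd\alpha(e_1,e_2)>0$ is also correct. However, the decisive step is wrong in two compounding ways. First, a sign error: with $e_1=\partial_r+f'(r)\partial_p$ and $e_2=R_\Gamma-\tfrac{h(r)r}{f(r)}\partial_\theta$ one gets
\begin{equation*}
\rd\alpha(e_1,e_2)=(\rd r\wedge\alpha_\Gamma)(e_1,e_2)+f'(r)(\rd r\wedge\rd\theta)(e_1,e_2)=1-f'(r)\,\frac{h(r)r}{f(r)},
\end{equation*}
not $1+f'(r)\tfrac{h(r)r}{f(r)}$ as you wrote (you correctly recorded $\rd\theta(e_2)=-\tfrac{hr}{f}$ but then dropped that minus sign when assembling the wedge). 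Second, and more seriously, your justification of positivity via ``$f'$ is small, so $|f'hr/f|<1$'' is false on $Y_\cap$: the extension of $f$ over $(1,1+\epsilon)$ is constructed so that $f^{-1}$ extends smoothly by the constant $1+\epsilon$, which forces $f'(r)\to-\infty$ as $r\to1+\epsilon$ (this is stated explicitly in the proof of \Cref{prop:trivial_topology}), and $h=1$ there. So the term $f'hr/f$ is unbounded, and with your sign your expression would be very negative near $r=1+\epsilon$; with the correct sign it is $1+|f'|\tfrac{hr}{f}\ge 1>0$ with no smallness needed. The paper's proof uses exactly this mechanism: the final quantity is $rg(r)-rf'(r)g(r)\tfrac{h(r)r}{f(r)}>0$ \emph{because} $f'<0$ and $h\ge0$, not because $f'$ is small. (The $C^2$-smallness of $f-1$ is what you need on $Y_V$, in \Cref{prop:interpolation}, and for taming by the symplectic form in \Cref{prop:foliation} — it is not available, and not needed, on $Y_\cap$.) Fix the sign and replace the smallness appeal by the observation that the correction term has a definite favorable sign, and your argument coincides with the paper's.
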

\begin{proof}
    If we pull back everything using the parametrization  $(1,1+\epsilon)\times \Gamma \times \Sphere^1$, we have 
    $$\xi_{J^F}= \xi_{\Gamma}\oplus \left \langle r\partial_r, R_{\Gamma}-\frac{h(r)r}{f(r)}\partial_{\theta} \right \rangle$$
    with a contact form $\alpha = f\rd \theta+r\alpha_{\Gamma}$. For $X\in \xi_{\Gamma}$ and $Y\in \left \langle r\partial_r, R_{\Gamma}-\frac{h(r)r}{f(r)}\partial_{\theta} \right \rangle$, we have
        $$\rd \alpha (X+Y,J^FX+J^FY)=r\rd\alpha_{\Gamma}(X,J_VX)+\rd r \alpha_{\Gamma}(Y,J^FY)+f'(r)\rd r \rd \theta (Y,J^FY).$$
    It suffices to prove 
    $$(\rd r \alpha_{\Gamma}+f'(r)\rd r \rd \theta )(r\partial_r, J(r\partial_r))>0$$
    Indeed, it is $rg(r)-rf'(r)g(r)\frac{h(r)r}{f(r)}>0$ as $f'(r)<0$.
\end{proof}

\begin{proposition}\label{prop:direction}
    Over $Y_\cap\cup Y_{\disk}\subset \widehat{V}\times \C$, we have the $\partial_r$ component of $J^F R_{\Gamma}$ is negative.
\end{proposition}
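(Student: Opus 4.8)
The plan to prove \Cref{prop:direction} is a direct computation in the explicit frame of $T(\widehat V\times\C)$ near $Y_\cap\cup Y_\disk$ fixed in \S\ref{sss:J^F} — namely $\partial_r,\partial_p,\partial_\theta,R_\Gamma$ together with a frame of $\xi_\Gamma$ — reading off the coefficient of $\partial_r$, i.e.\ $\rd r(J^F R_\Gamma)$, at the end. First I would collect what \S\ref{sss:J^F} supplies on $Y_\cap$: the splitting hyperplane $\xi_{J^F}=\xi_\Gamma\oplus\langle\partial_r+f'\partial_p,\ W\rangle$ with $W:=R_\Gamma-\frac{h(r)r}{f(r)}\partial_\theta$; the Reeb field $R=(f-rf')^{-1}(\partial_\theta-f'R_\Gamma)$; and the defining relations $J^F R=-C\partial_s$ with $C>0$, $J^F|_{\xi_\Gamma}=J_V$, and $J^F(\partial_r+f'\partial_p)=\frac{g(r)}{r}W$. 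Applying $J^F$ to the last relation and using $(J^F)^2=-\Id$ yields $J^F W=-\frac{r}{g(r)}(\partial_r+f'\partial_p)$.

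The one genuinely new step is to re-express $R_\Gamma$ in terms of $R$ and vectors of $\xi_{J^F}$. From the Reeb formula, $\partial_\theta=(f-rf')R+f'R_\Gamma$; substituting this into the definition of $W$ and solving for $R_\Gamma$ gives
\[
R_\Gamma=\frac{f}{f-hrf'}\,W+\frac{hr(f-rf')}{f-hrf'}\,R ,
\]
so $R_\Gamma$ has no $\xi_\Gamma$- and no $(\partial_r+f'\partial_p)$-component. Since $f$ is $C^2$-close to $1$ we have $f>0$; with $f'<0$, $r>0$ on $(1,1+\epsilon)$ and $h\ge0$ one gets $f-hrf'\ge f>0$ and $f-rf'>0$, hence the coefficient of $W$ is strictly positive and that of $R$ is nonnegative.

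Applying $J^F$ and using the two identities,
\[
J^F R_\Gamma=-\frac{f}{f-hrf'}\cdot\frac{r}{g(r)}\,(\partial_r+f'\partial_p)\ -\ \frac{hr(f-rf')}{f-hrf'}\,C\,\partial_s .
\]
Along $Y_f$ inside $\widehat V\times\C$, $\partial_s$ is a positive multiple of the Liouville vector field of $(\widehat V\times\C,\widehat\lambda+\rho^2\rd\theta)$, which in the region $r>1$ equals $r\partial_r+p\partial_p$; hence $\rd r(\partial_s)>0$. As $\rd r(\partial_p)=0$, both terms above contribute non-positively to $\rd r(J^F R_\Gamma)$, and the first is strictly negative, so $\rd r(J^F R_\Gamma)<0$ on $Y_\cap$. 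On $Y_\disk$ the computation is immediate: there $R=(1+\epsilon)^{-1}R_\Gamma$, so $J^F R_\Gamma=-(1+\epsilon)C\partial_s$ and $\rd r(J^F R_\Gamma)=-(1+\epsilon)C\,\rd r(\partial_s)<0$.

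The only delicate point is the identification of $\partial_s$ with a positive multiple of the Liouville field $r\partial_r+p\partial_p$ along $Y_f$: this is what makes $\rd r(\partial_s)>0$, and also why ``$\partial_r$ component'' must be read in the ambient $\widehat V\times\C$-frame (in which $\partial_s$ genuinely has a $\partial_r$-part) rather than as a separate symplectization direction. It holds because the positive end of $\widehat{V\times\disk}$ is glued on along the Liouville flow of $\widehat V\times\C$ out of $Y_f$. Everything else is bookkeeping with \S\ref{sss:J^F}, and the sign-chase uses only $f>0$, $f'<0$, $g>0$, $h\ge0$.
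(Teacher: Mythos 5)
Your proof is correct and follows essentially the same route as the paper's: both solve the defining relations of $J^F$ linearly for $J^F R_\Gamma$ and read off the sign of the $\partial_r$-coefficient in the ambient frame using $\partial_s=r\partial_r+p\partial_p$ together with $f>0$, $f'<0$, $g>0$, $h\ge 0$. Your write-up is in fact slightly cleaner than the paper's (whose displayed formula swaps the labels $Y_\cap$ and $Y_{\disk}$ and has a harmless sign slip in the $\partial_p$-term), since the intermediate decomposition $R_\Gamma=\frac{f}{f-hrf'}W+\frac{hr(f-rf')}{f-hrf'}R$ makes the two non-positive contributions transparent.
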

\begin{proof}
    As $\partial_s=r\partial_r+p\partial_p$ viewed as in $\widehat{V}\times \C$, then over $Y_\cap$, we have $J^FR_{\Gamma}=-(1+\epsilon)C\partial_s$ having a negative component in the $\partial_r$ direction. Over $Y_{\disk}$, we can solve that 
    $$\left(-\frac{ghr}{f}f'+g\right)J^FR_{\Gamma}=-C\frac{ghr}{f}(f-rf')(r\partial_r+p\partial_p)-r\partial_r+rf'\partial_p$$
    The claim follows.
\end{proof}

\begin{proposition}\label{prop:foliation}
    There exists an extension of the above almost complex from the positive end $\R_{\ge 0}\times Y_f$ to $\widehat{V\times \C}$, such that it is tamed by the symplectic form and there is a holomorphic foliation by $\C$-family of leaves biholomorphic to $(\widehat{V},J_V)$. 
\end{proposition}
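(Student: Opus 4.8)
The plan is to build the foliation on the whole of $\widehat{V\times\C}$ by first noting that on the symplectization end $\R_{\ge 0}\times Y_f$ we have already specified $J^F$ together with its hyperplane field $\xi_{J^F}$, and over $Y_V$ this hyperplane field is exactly $TV$, with $J^F|_{TV}=J_V$. So on the end, the obvious candidate leaves are the submanifolds $\{(s,v,\theta):v\in \widehat V,\ \theta\ \text{fixed}\}$ inside the $Y_V$ region — these are copies of $\widehat V$ and are $J^F$-holomorphic because $J^F$ preserves $TV$ and restricts to $J_V$ there, while the Reeb/$\partial_s$ directions are normal. First I would check that this $\Sphere^1$-family of leaves over $Y_V$, together with the analogous $\C$-family coming from the disk factor over $Y_\disk$ (where $J^F$ is good and $\pi_\disk$ is holomorphic by \Cref{prop:d}), fit together across $Y_\cap$ into a single $\C$-parametrized holomorphic foliation of the end; the interpolation data $h,g$ in \S\ref{sss:J^F} were chosen precisely so that $\xi_{J^F}$ rotates from $TV$ to $\xi$, and I would verify the leaves track this rotation. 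The key local model is that over $Y_\cap\cup Y_\disk$ the leaf through a point spirals in the $\partial_\theta$--$\partial_r$ plane governed by the gradient-type flow of $f(r)$; \Cref{prop:direction} gives the sign that makes this flow point inward, which is what lets the leaves close up to copies of $(\widehat V,J_V)$ rather than escaping.

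Next I would extend $J^F$ from the end $\R_{\ge 0}\times Y_f$ inward over the compact piece $V\times\disk$. Here the construction is: take the product almost complex structure $J_V\times \mathbf i$ on $\widehat V\times \C$ (which is tamed by $\rd\widehat\lambda + \rho^2\,\rd\theta$, hence tamed by the symplectic form on $V\times\disk$ after the usual completion), and interpolate between $J_V\times\mathbf i$ in the interior and $J^F$ on the collar $\R_{\ge 0}\times Y_f$. The foliation in the interior is the manifest product foliation $\{\widehat V\times\{w\}: w\in\C\}$, whose leaves are biholomorphic to $(\widehat V,J_V)$; I would choose the interpolation of almost complex structures so that it restricts to an interpolation of the corresponding integrable-then-not foliations, i.e.\ the leaves of the interior product foliation are glued to the leaves on the end described above. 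Concretely the matching happens in the $Y_V$ region where both descriptions are literally ``$\widehat V$ times a point'', so no work is needed there; over $Y_\cap$ one matches the spiralling leaf to the product leaf using that $\pi_\C\circ(\text{leaf})$ is, in the $(p,\theta)$ coordinates, the trajectory that the good-$J$ condition $\mathbf i\partial_\theta=-2p\partial_p$ forces.

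Then I would confirm the two required properties. Tameness by the symplectic form: on the interior it is tameness of $J_V\times\mathbf i$, on the end it is the $\alpha$-tameness already proved (Propositions above, plus the fact that an $\alpha$-tame $J$ on a symplectization is tamed by $\rd(e^s\alpha)$ for $F_J\gg0$, as recalled after \Cref{def:tame}), and on the interpolation region one uses that tameness is an open condition and the interpolation can be made through tame structures since both ends are tamed by the same symplectic form. That the $\C$-family of leaves is an honest smooth foliation: near each point one has an explicit leaf through it (product leaf in the interior, the $Y_V$/$Y_\cap$/$Y_\disk$ models on the end), the leaves are pairwise disjoint since their $\C$-coordinates differ, and they are holomorphic by construction; a transversality/dimension count shows the leaves sweep out everything. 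The leaves are biholomorphic to $(\widehat V,J_V)$ because each is, after the identification, a graph over $\widehat V$ with the pulled-back complex structure equal to $J_V$ (on the end this is the content of $J^F|_{TV}=J_V$ together with the leaf being tangent to a copy of $TV$ twisted only in the Reeb/fiber directions, which are transverse to the leaf).

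The main obstacle I expect is the gluing across $Y_\cap$: one must produce a single $\C$-parametrized family of leaves that is simultaneously (i) the product foliation in the interior, (ii) $\Sphere^1$'s worth of $\widehat V$-slices over $Y_V$, and (iii) the good-$J$ foliation over $Y_\disk$, with all of this holomorphic for one interpolating $J^F$. The delicate point is that over $Y_\cap$ the leaf is no longer a product — it is a graph whose $\C$-component is a nonconstant function of $r$ determined by $f'(r)$ and the interpolation functions $h,g$ — and one must check this graph is genuinely $J^F$-holomorphic for the $J^F$ defined in \S\ref{sss:J^F}, i.e.\ that the conditions $J^F(r\partial_r+rf'(r)\partial_p)=g(r)(R_\Gamma-\tfrac{h(r)r}{f(r)}\partial_\theta)$ and $J^F|_{\xi_\Gamma}=J_V$ are exactly the compatibility needed for the prescribed leaves to be holomorphic. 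This is a direct but somewhat involved computation in the $(r,\theta,p)$ coordinates on $Y_\cap\subset\widehat V\times\C$; everything else (tameness, the interior product model, biholomorphism type of leaves) is comparatively routine, and the whole construction is essentially that of \cite{avdek2024bourgeois}, which I would cite for the detailed verification.
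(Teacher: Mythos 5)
Your outline gets the interior right (the paper also takes $J^F=J_V\oplus\mathbf{i}$ on $V\times\disk$ and interpolates out to the cylindrical $J^F$ through the region between $V\times \disk$ and the graph of $f$, foliated by the graphs $p=b(\sigma)(f-1)+1$), and you correctly flag $Y_\cap$ as the place where something must be checked. But the mechanism by which the leaves become copies of $\widehat V$ is misidentified, and that is the actual content of the proposition. The slices over $Y_V$ are graphs of $f$ over the \emph{compact} domain $V$ — the parametrization of $Y_V$ is $V\times\Sphere^1$ — so nothing in the $Y_V$ region supplies the cylindrical end $\Gamma\times(1,\infty)$ of a leaf. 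Likewise the leaves over $Y_\disk$ are not ``the analogous $\C$-family coming from the disk factor'': fibers of $\pi_\disk$ are copies of $\widehat\Gamma$, which is what \Cref{prop:foliation_symp} uses for the symplectization, not what \Cref{prop:foliation} needs.

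What the paper does instead is complete each compact $V$-slice by attaching $\Gamma\times(\text{flow lines of }-J^FR_\Gamma)$ emanating from its boundary; these sweep through $Y_\cap$, $Y_\disk$ and the region $\mathbf{R}$ bounded by $r=1$, $r=1+\epsilon$ and $p=f$ (a region your decomposition never addresses and where $J^F$ still has to be defined, subject only to tameness and $R_\Gamma$-invariance). Such a piece is holomorphic because $J^F$ preserves $\xi_\Gamma$ and the complementary $2$-plane $\langle R_\Gamma, J^FR_\Gamma\rangle$, and it forms a cylindrical end precisely because the $\partial_r$-component of $J^FR_\Gamma$ is \emph{negative} (\Cref{prop:direction} together with tameness on $\mathbf{R}$), so the flow lines of $-J^FR_\Gamma$ escape to $r=\infty$. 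You have this exactly backwards: you invoke \Cref{prop:direction} to argue the flow ``points inward'' so that the leaves ``close up \dots rather than escaping.'' The leaves must escape — that escape is what realizes $\Gamma\times(1,\infty)$ inside each leaf, makes it biholomorphic to $\widehat V$, and makes the completed leaves cover all of $\widehat{V\times\C}$ with leaf space $\C$. Read with your sign, the construction fails on all three counts. Finally, deferring the $Y_\cap$ verification to \cite{avdek2024bourgeois} is not a substitute: the remark following the proposition points out that the foliations there are built differently (their leaves need prescribed asymptotics), whereas here the leaf ends are produced dynamically by the $-J^FR_\Gamma$ flow and require no asymptotic control.
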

\begin{proof}
    We assume $f(1)=1+\eta$. We pick a smooth function $b:[0,\eta]\to \R$ such that $b'\ge 0$, $b=0$ near $0$ and $b=1$ near $\eta$. We first define $J^F$ on the domain in $\widehat{V}\times \C$ bounded by $p=f$ and $r=1$. On $V\times \disk$, we have $J^F=J_V\oplus \mathbf{i}$, where $\mathbf{i}$ is the standard almost complex structure on $\disk$. The remaining part is foliated by $p=b(\sigma)(f-1)+1$ for $\sigma\in (0,\eta)$. Using the parametrization $V\times \SS^1$ from the graph of $b(\sigma)(f-1)+1$, we define 
    $$J^F|_{TV}=J_V, \quad J^F R_{\sigma}=-Cb(\sigma)(r\partial_r+p\partial_p)+\frac{1-b(\sigma)}{1+\sigma}2p\partial_p$$
    where $R_{\sigma}$ is the Reeb vector on the hypersurface $p=b(\sigma)(f-1)+1$.
    It is straightforward to check that they patch up with contact structure on $\R_+\times Y_V\cup V\times  \disk$. For $\rd f$ sufficiently small and $C\ll 1$, we have $J^F$ tamed by the symplectic form. The missing domain $\mathbf{R}$ is bounded by $r=1,1+\epsilon$ and $p=f$. Over this region, we have $J^F|_{\xi_{\Gamma}}=J_V$, the rest direction is spanned by $\partial_r,R_{\Gamma},\partial_p, \partial_\theta$. We use $t$ to denote the local coordinate from integrating $R_{\Gamma}$, the symplectic form in this region is $r\rd \alpha_{\Gamma}+\rd r \rd t+\rd p \rd \theta$. Hence $\xi_{\Gamma}$ and $\langle  \partial_r,R_{\Gamma},\partial_p, \partial_\theta \rangle$ are symplectically orthogonal to each other. We only need to define $J^F$ on $\langle  \partial_r,R_{\Gamma},\partial_p, \partial_\theta \rangle$, tamed by the standard symplectic form $\rd r \rd t+\rd p \rd \theta$. We require that $J^F$ in those directions only depend on $p,r$ coordinates, in particular, it is invariant under the flow of $R_{\Gamma}$. This is possible as near the boundary of $\mathbf{R}$, such a property is enjoyed by the already defined $J^F$. The tameness $(\rd r\rd  t+ \rd p \rd \theta )(R_{\Gamma},J^F R_{\Gamma})>0$ implies that the $\partial_r$-component of $J^F R_{\Gamma}$ is negative.

    Now in the union of $\R_+\times Y_V$ and $r\le 1$, we have a foliation, with each leaf biholomorphic to $(V,J_V)$. On $V\times \disk$, the leaf is $V\times \{*\}$; On $\R_+\times Y_V$, the leaf is the surface with $s,\theta$ constant, i.e.\ the graph of $f$; On the middle region, the leaf is the graph of $b(\sigma)(f-1)+1$ with fixed $\theta$. We can extend and complete this foliation by adding the flow lines of $-J^FR_{\Gamma}$, which goes to $\infty$ in the $r$-coordinate by the discussion above and \Cref{prop:direction}. It is clear from the construction, $\widehat{V\times \disk}$ is covered by those leaves, each leaf is biholomorphic to $(\widehat{V},J_V)$ and the leaf space is diffeomorphic to $\C$, e.g.\ from the $\C$-fiber of $\widehat{V}\times \C$.
 \end{proof}

\begin{figure}[htb] {\small
\begin{overpic}[scale=0.5]
{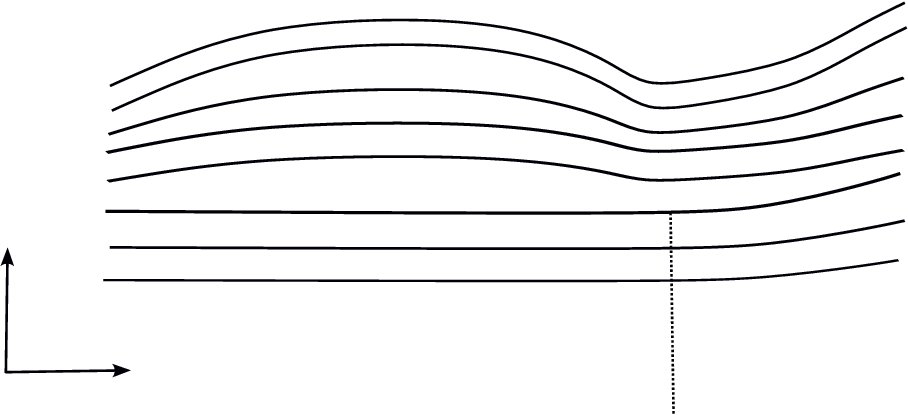}
\put (12,0) {$\partial_r$}
\put (-6,18) {$\partial_p$}
\put (75,10) {$r=1$}
\put (0,22) {$p=1$}
\put (45,45) {$\mathrm{graph}(f)$}
\put (0,30) {leaves}
\put (40,10) {$V\times \disk$}
\end{overpic}}
\caption{Foliation on $\widehat{V\times \disk}$}
\label{fig:fol}
\end{figure}

\begin{remark}
    Constructions of foliations in \cite{avdek2024bourgeois} were more involved, as we needed the leaves asymptotic to finite contact submanifolds in $Y_{\disk}$. While in \Cref{prop:foliation}, the leaves do not have nice asymptotics,  see \Cref{fig:fol}. But since we will only apply the intersection theory of cures asymptotic to $\gamma_p$ with this foliation to confine its homology class in \Cref{cor:disk}, the asymptotics of leaves do not matter.
\end{remark}

\begin{cor}\label{cor:disk}
Let $u:\C \to \widehat{V\times\disk}$ be a $J^F$ holomorphic plane asymptotic to $\gamma_p$, then $u$ intersects each leaves from \Cref{prop:foliation} transversely once. In particular, the homotopy class (relative to the boundary/asymptotics) of $u$ is given by the $\C$-fiber.
\end{cor}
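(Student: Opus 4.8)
The plan is to use the holomorphic foliation from \Cref{prop:foliation} together with a combination of classical positivity of intersection and a neck-stretching/degree argument. The key point is that each leaf $F$ is biholomorphic to $(\widehat{V},J_V)$ and the leaf space is the $\C$-fiber, so the foliation is genuinely a foliation by pseudo-holomorphic hypersurfaces and the intersection number $u\bullet F$ (in the classical sense, since $\gamma_p$ does not lie in any leaf's asymptotic for the leaves that run off to infinity — or rather, we must check the asymptotics carefully) is topological and independent of the leaf. Since the leaves sweep out all of $\widehat{V\times\disk}$ and a generic leaf is disjoint from the asymptotic orbit $\gamma_p$ (the leaves limit to $r=\infty$ in the $\partial_r$-direction by \Cref{prop:direction} and the discussion in the proof of \Cref{prop:foliation}), a leaf through a point near infinity in the $\C$-fiber direction misses $u$ entirely, so $u\bullet F = 0$ for that leaf, hence for all leaves.

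First I would pin down the homology/homotopy bookkeeping: $u:\C\to\widehat{V\times\disk}$ is a plane asymptotic to $\gamma_p$, and $\gamma_p$ is the $\Sphere^1$-fiber over the critical point $p$, which in the foliation picture sits in the leaf $V\times\{*\}$ over the corresponding point of the $\C$-fiber — no wait, $\gamma_p$ is an orbit in $Y_f$, so near the positive end it is the fiber $\{p\}\times\Sphere^1$ inside $Y_V = $ graph of $\rho^2=f$ over $V$; the leaf of the foliation through points near $\gamma_p$ is the graph of $f$ (a copy of $V$) with fixed $\theta$, so $\gamma_p$ is transverse to these leaves, crossing each once as $\theta$ ranges over $\Sphere^1$. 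This already tells us that for leaves $F$ in the family ``indexed by $\theta$ near $\gamma_p$'', the count $u\bullet F$ picks up exactly the algebraic intersection coming from the asymptotic winding plus interior intersections, and since $u$ is $J^F$-holomorphic all local contributions are positive by \cite[Exercise 2.6.1]{McDSal}. So it suffices to show $u\bullet F \le 1$ for one leaf and $u\bullet F \ge 1$ for all leaves; monotonicity/topological invariance then forces $u\bullet F = 1$ for every $F$, with a single transverse interior intersection (or a single hidden intersection at infinity) and nothing else.

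The lower bound $u\bullet F\ge 1$: $u$ must hit \emph{some} leaf, because the leaves foliate $\widehat{V\times\disk}$ and $u$ is non-constant (it is asymptotic to $\gamma_p$), so its image contains an interior point lying on exactly one leaf, say $F_0$; since $u$ is not contained in $F_0$ (as $\C$ is noncompact and asymptotic to $\gamma_p$, while a leaf $\simeq \widehat V$ is asymptotic to $\Gamma$-orbits — a different set of orbits), positivity of intersection gives $u\bullet F_0\ge 1$. For the upper bound, I would use the asymptotic expansion: near the puncture $u$ converges to $\gamma_p$, and the leaves near $\gamma_p$ are exactly the local holomorphic foliation of \S\ref{ss:foliation} in the normal direction ($L$-simple), so by \Cref{ex:eigen} the relevant eigenfunction $\eta_{-1}$ (or rather its analogue for the $\C$-direction) has winding $0$, and the leading term of $u$'s normal expansion has winding $\ge 0$ — actually I need $u\bullet F$ to equal exactly the topological $\C$-fiber class intersection, which is $1$ by direct computation in homology. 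So the clean route is: (i) establish $u\bullet F$ is independent of $F$ and equals the homological intersection $[u]\cdot[F] = 1$ (computed in $H_2(\widehat{V\times\disk},\partial) $ against the $\C$-fiber, using that $u$ is asymptotic to the $\Sphere^1$-fiber over $p$ which links the $\C$-fiber once); (ii) conclude from \Cref{thm:intersection} (or just classical positivity, since this foliation's leaves are genuinely closed-at-infinity enough — all contributions, interior $\delta(q,u,F)>0$ and hidden $\delta_\infty\ge 0$, are nonnegative) that the single unit of intersection is realized by one transverse point and no hidden intersections; (iii) run this for every leaf to get transversality against every leaf and hence the foliation statement. The main obstacle I expect is step (i)/(ii): making precise that $u\bullet F$ (Siefring's refined count) agrees with the naive homological count against the $\C$-fiber, which requires controlling the asymptotic winding of $u$ relative to the local foliation leaves near $\gamma_p$ — i.e.\ checking that the leaves' local model near $\gamma_p$ really is the gradient-flow foliation of \S\ref{ss:foliation} with the eigenvalue data of \Cref{ex:eigen}, so that no winding discrepancy (hidden intersection) can be hiding at the puncture and the whole intersection number is concentrated at one honest interior point.
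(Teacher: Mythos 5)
Your ``clean route'' (i)--(iii) is essentially the paper's proof: the intersection number of $u$ with any leaf is topological and equals the linking number of $\gamma_p$ with that leaf, which is $1$; positivity of interior intersections then forces a single transverse point on every leaf, and hitting each leaf exactly once identifies $u$ with the leaf space $\C$. However, two things in your write-up need repair. First, the claim in your opening paragraph that ``a leaf through a point near infinity in the $\C$-fiber direction misses $u$ entirely, so $u\bullet F=0$ for that leaf, hence for all leaves'' is false and contradicts the very statement you are proving: no leaf misses $u$ (each is met exactly once), and if $u\bullet F$ were $0$ you would be forced to conclude $\ima u\subset F$ or $\ima u\cap F=\emptyset$ for every leaf, which is absurd for a non-constant plane in a foliated total space. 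The correct normalization is not ``$0$ from a distant leaf'' but ``$1$ from the linking number'': near the positive end the trivial cylinder over $\gamma_p$ meets the leaf with fixed $(s_0,\theta_0)$ in $\R_+\times Y_V$ exactly once, transversely.

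Second, the issue you flag as ``the main obstacle'' --- controlling the asymptotic winding of $u$ relative to the leaves near $\gamma_p$ so that no hidden intersection $\delta_\infty$ lurks at the puncture --- is a non-issue here, and worrying about it conflates the $L$-simple foliations underlying \Cref{thm:intersection_foliation} with the foliation of \Cref{prop:foliation}. The leaves of \Cref{prop:foliation} are completed by flow lines of $-J^FR_{\Gamma}$ and escape to $r=\infty$; they are \emph{not} asymptotic to $\gamma_p$ (the remark following \Cref{prop:foliation} makes exactly this point: the leaves have no nice asymptotics, and for \Cref{cor:disk} the asymptotics of leaves do not matter). Since $p$ is an interior critical point, $\gamma_p$ stays in a region disjoint from the closure of every leaf, so $u\cdot F$ is an honest, finite, classical intersection number --- no push-off $u_\tau$, no eigenvalue or winding analysis --- equal to $\lk(\gamma_p,F)=1$ by deforming $u$ to the trivial cylinder. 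Classical positivity of intersection alone then yields one transverse point on each leaf, which is the paper's two-line argument.
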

\begin{proof}
     $\gamma_p$ placed at the positive end of $\widehat{V\times \disk}$ has linking number $1$ with any leaf from the foliation in \Cref{prop:foliation}. By the positivity of (interior) intersection, we have $u$ intersecting every leaf transversely at one point. In particular, $u$ is diffeomorphic to the leaf space, hence is homotopic to the $\C$-factor in $\widehat{V\times \disk}$. 
\end{proof}

With a bit of misuse of terminology, we use $J^F$ to denote the $\R$-invariant almost complex structure on $\widehat{Y}_f$ from $J^F$ restricted to the positive end of $\widehat{V\times \disk}$.
\begin{proposition}\label{prop:foliation_symp}
    $(\widehat{Y}_f,J_F)$ has a holomorphic foliation, with $\disk$-family of leaves biholomorphic to $\widehat{\Gamma}$, and a $\C\backslash \disk(\simeq\C^*)$-family of leaves biholomorphic to $\widehat{V}$.
\end{proposition}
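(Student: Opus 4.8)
The plan is to take the holomorphic foliation of $\widehat{V\times\C}$ constructed in \Cref{prop:foliation} and intersect it with the hypersurface $\widehat{Y}_f$ sitting at the positive end. Since $J^F$ on $\widehat{Y}_f$ is by definition the restriction of the foliation-adapted almost complex structure to the positive end $\R_{\ge 0}\times Y_f\subset\widehat{V\times\C}$, each leaf $\ell\simeq(\widehat V,J_V)$ of the ambient foliation meets $\widehat Y_f$ in a $J^F$-holomorphic submanifold. The key point is to understand this intersection leaf-by-leaf according to which $\C$-fiber value $z$ the ambient leaf lies over near $r=1$, and to check that $\ell\cap\widehat Y_f$ is again a smooth holomorphic hypersurface of the expected type.

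First I would recall from the end of the proof of \Cref{prop:foliation} the explicit description of the leaves: over $V\times\disk$ a leaf is $V\times\{z_0\}$ for $z_0\in\disk$; over $\R_+\times Y_V$ it is the graph $\{p=f,\ \theta=\theta_0,\ s\ \text{free}\}$; and over the transition regions it is obtained by extending via flow lines of $-J^FR_\Gamma$, which escape to $r=\infty$ by \Cref{prop:direction}. Now $\widehat Y_f$ is the hypersurface $\{p=f(x),\ r=1+\epsilon\}\cup(\text{its positive symplectization})$. A leaf over a fiber value $z_0$ with $|z_0|<1$ (i.e.\ $z_0$ lies strictly inside the $\disk$, equivalently the leaf passes through the region $r\le 1+\epsilon$, $p<f$) intersects $\widehat Y_f$ in exactly the locus where that leaf crosses $\{r=1+\epsilon\}\cup\{p=f\}$; this locus is a copy of $\widehat\Gamma$ — it is the union of the slice at $r=1+\epsilon$ (a copy of $\Gamma$ with the right Liouville/contact structure induced by $J_V|_{\xi_\Gamma}=J_V$) and its completion by flowing in the symplectization direction. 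So the $|z_0|<1$ leaves of the ambient foliation cut out a $\disk$-family of $\widehat\Gamma$-leaves in $\widehat Y_f$. A leaf over $z_0$ with $|z_0|\ge 1$, on the other hand, never enters $r\le 1+\epsilon$ in the $V\times\disk$-like part; it lies entirely in the region $\{r\ge 1+\epsilon\}$ and the portion of it inside $\widehat Y_f$ is the graph of $f$ over all of $\widehat V$ with $s,\theta$ tied to the fiber coordinate — a copy of $\widehat V$. These are parametrized by $z_0\in\C\setminus\disk\simeq\C^*$. Transversality of each intersection follows from transversality of the ambient leaf to $\widehat Y_f$, which holds because $\widehat Y_f$ is a level set of $(r,p-f)$ and the leaves are either constant in $(r,p-f)$-direction (the $V\times\{z_0\}$ type, which meet $\{p=f\}$ transversally as $p-f$ strictly decreases across the leaf) or flow transversally across $\{r=1+\epsilon\}$ by \Cref{prop:direction}.

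The main obstacle I expect is the bookkeeping at the corner where $\{r=1+\epsilon\}$ meets $\{p=f\}$, i.e.\ checking that the two pieces of a leaf-intersection (the $Y_V$-type graph piece and the $Y_\disk$-type piece) glue up into a single smooth holomorphic submanifold of $\widehat Y_f$, and that the resulting leaf is genuinely biholomorphic to $\widehat\Gamma$ (resp.\ $\widehat V$) rather than merely diffeomorphic. This is precisely the compatibility already verified in \Cref{prop:foliation} — the almost complex structure was built to patch smoothly across $Y_\cap$ — so the matching of the induced structures on the intersection leaves is inherited from there; one just has to track the fiber coordinate carefully through the three regions $Y_V$, $Y_\cap$, $Y_\disk$. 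A secondary point is to confirm the count of leaf types: the division $|z_0|<1$ versus $|z_0|\ge 1$ is dictated by whether the ambient leaf over $z_0$ reaches into the region bounded by $p=f$ and $r\le 1+\epsilon$, and this is exactly the dichotomy $\disk$ versus $\C\setminus\disk$ of the $\C$-leaf space of \Cref{prop:foliation}, restricted to $\widehat Y_f$. Once these verifications are in place, the foliation of $\widehat Y_f$ is the stated one, completing the proof.
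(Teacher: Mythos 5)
Your approach does not work as stated, and the problem is at the very first step: $\widehat{Y}_f$ in this proposition is not a hypersurface in $\widehat{V}\times\C$ but the full symplectization $\R_s\times Y_f$ equipped with the $\R$-invariant almost complex structure obtained by restricting $J^F$ to the positive end (see the sentence immediately preceding the proposition). It has the same dimension $2n+2$ as $\widehat{V\times\disk}$, so ``intersecting the ambient foliation with $\widehat{Y}_f$'' is not a well-posed recipe for producing a foliation of $\widehat{Y}_f$. If instead you literally intersect a $2n$-dimensional ambient leaf with the hypersurface $Y_f\subset\widehat{V}\times\C$ (or with $\{r=1+\epsilon\}\cup\{p=f\}$, as you propose), a transverse intersection is $(2n-1)$-dimensional --- odd, hence not a complex submanifold and in particular not a copy of $\widehat{\Gamma}=\R\times\Gamma$, which has dimension $2n$. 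More fundamentally, every leaf of the foliation in \Cref{prop:foliation} is biholomorphic to $\widehat{V}$; no restriction or intersection of that foliation can produce the $\disk$-family of $\widehat{\Gamma}$-leaves that the proposition asserts.

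The intended (and the paper's) argument is a direct construction on the symplectization, using the same ingredients but not the ambient foliation itself. In $\R_s\times Y_V$ the slices $\{(s,\theta)\}\times V$ are $J^F$-holomorphic (their tangent space is $\xi_{J^F}=TV$, on which $J^F=J_V$) and give a $\C^*\simeq\R_s\times\Sphere^1_\theta$-family of $V$-leaves; each is completed to a copy of $\widehat{V}$ by attaching the flow lines of $-J^FR_\Gamma$ emanating from its boundary copy of $\Gamma$, which escape by \Cref{prop:direction}. On the complement one sweeps a $\Gamma$-slice by the flow of $-J^FR_\Gamma$: since $J^F$ preserves $\xi_\Gamma$ and the plane $\langle R_\Gamma, J^FR_\Gamma\rangle$, the resulting $2n$-dimensional submanifolds are holomorphic and biholomorphic to the symplectization $\widehat{\Gamma}$, and they are parametrized by a disk (a transversal in the $\disk$-factor of $Y_\disk$). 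Your instinct that the $-J^FR_\Gamma$ flow lines from \Cref{prop:foliation} are the relevant objects is correct --- the tails added in the completion step there are precisely pieces of such $\widehat{\Gamma}$-leaves --- but they must be assembled into new leaves by integrating the flow, not extracted by intersecting the old leaves with a hypersurface.
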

\begin{proof}
    We can complete the foliations on $\widehat{Y}_V$ with $V$-leaves by adding flow lines of $-JR_{\Gamma}$ to get a $\C^*$ family of leaves biholomorphic to $\widehat{V}$. On the complement, by integrating flows of $-JR_{\Gamma}$ from $\Gamma \times \{(r,p,\theta)\}\in \widehat{V}\times \C$, we get a disk family of leaves biholomorphic to $\widehat{\Gamma}$.
\end{proof}

In the following, we describe an interpolation between $J^F$ on $\widehat{Y}_f$ and a compatible almost complex structure on $\widehat{Y}_{f}$, which then gives a tamed almost complex structure on the trivial cobordism. Over $Y_{V}$, using the parametrization $V\times \SS^1$, we have 
$$\xi=\left\langle X-\frac{\lambda(X)}{f}\partial_{\theta}\left| X\in TV \right.\right\rangle. $$
We can define $\xi^t$ as a family of hyperplane distributions on $Y_f$ such that
\begin{enumerate}
    \item $\xi^0=\xi_{J^F}$ and $\xi^1=\xi$;
    \item $\xi^t=\xi$ over $Y_{\disk}$;
    \item $\xi^t=\left\langle X-t\frac{\lambda(X)}{f}\partial_{\theta}| X\in TV \right \rangle$ over $Y_V$;
    \item $\xi^t=\xi_{\Gamma} \oplus \left\langle \partial_r+f'(r)\partial_p, R_{\Gamma}-\frac{(1-t)h(r)r+tr}{f(r)}\partial_{\theta}\right\rangle$ over $Y_\cap$.
\end{enumerate}
We also define similarly $J^t$ on $\xi^t$ by
\begin{enumerate}
    \item $J^t$ is the good $J$ on $Y_{\disk}$ and is independent of $t$;
    \item $J^t(X-t\frac{\lambda(X)}{f}\partial_{\theta})=J_VX-t\frac{\lambda(J_VX)}{f}\partial_{\theta}$ over $Y_V$.
    \item $J^t|_{\xi_{\Gamma}}=J_V$, $J^t(r\partial_r+rf'(r)\partial_p)=g(r)(R_{\Gamma}-\frac{(1-t)h(r)r+tr}{f(r)}\partial_{\theta})$ over $Y_\cap$.
\end{enumerate}

\begin{proposition}\label{prop:interpolation}
    $J^t$ is tamed by $\rd \alpha$ on $\xi^t$.
\end{proposition}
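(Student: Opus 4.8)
The plan is to verify tameness region by region, $Y_{\disk}$, $Y_V$ and $Y_\cap$, exactly as in the $t$-independent propositions already proved in \S\ref{sss:J^F}, keeping track of how the interpolation parameter $t$ enters. On $Y_{\disk}$ there is nothing to do: by construction $\xi^t=\xi$ and $J^t$ is the good almost complex structure of \eqref{eqn:J_good} for all $t$, which is compatible with $\mathrm{d}\alpha$, so in particular $\mathrm{d}\alpha(V,J^tV)>0$ for all nonzero $V\in\xi$. On $Y_V$, I would use the parametrization by $V\times\SS^1$ and the explicit description $\xi^t=\langle X-t\frac{\lambda(X)}{f}\partial_\theta\mid X\in TV\rangle$ together with $J^t(X-t\frac{\lambda(X)}{f}\partial_\theta)=J_VX-t\frac{\lambda(J_VX)}{f}\partial_\theta$. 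Since $\mathrm{d}\theta$ vanishes on such vectors after one contracts against $\partial_\theta$'s $\mathrm{d}\alpha$-partner appropriately, the point is that the $\partial_\theta$-correction is designed precisely so that $\xi^t$ lands in $\ker\alpha$; a short computation then shows $\mathrm{d}\alpha(X-t\tfrac{\lambda(X)}{f}\partial_\theta,\;J_VX-t\tfrac{\lambda(J_VX)}{f}\partial_\theta)=\mathrm{d}\lambda(X,J_VX)+(\text{terms that cancel})$, and $\mathrm{d}\lambda(X,J_VX)>0$ because $J_V$ is a compatible almost complex structure on $\widehat V$. This is the same computation as in the $t=1$ case, with $t$ merely rescaling the projection, so positivity is preserved for every $t\in[0,1]$.

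The only region requiring genuine bookkeeping is $Y_\cap$, where $\xi^t$ and $J^t$ both depend on $t$ through the coefficient $\frac{(1-t)h(r)r+tr}{f(r)}$ multiplying $\partial_\theta$ in the generator $R_\Gamma-\frac{(1-t)h(r)r+tr}{f(r)}\partial_\theta$. Writing $k_t(r):=(1-t)h(r)r+tr$, I would repeat the proof of the $\alpha$-tameness proposition for $J^F$ on $Y_\cap$ verbatim with $h(r)r$ replaced by $k_t(r)$: pulling everything back to $(1,1+\epsilon)\times\Gamma\times\SS^1$ with contact form $\alpha=f\,\mathrm{d}\theta+r\alpha_\Gamma$, one splits $\xi^t=\xi_\Gamma\oplus\langle r\partial_r,\;R_\Gamma-\frac{k_t(r)}{f(r)}\partial_\theta\rangle$, notes that the $\xi_\Gamma$-block contributes $r\,\mathrm{d}\alpha_\Gamma(X,J_VX)>0$, and reduces to checking $(\mathrm{d}r\,\alpha_\Gamma+f'(r)\,\mathrm{d}r\,\mathrm{d}\theta)(r\partial_r,\,J^t(r\partial_r))>0$. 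Since $J^t(r\partial_r+rf'(r)\partial_p)=g(r)\bigl(R_\Gamma-\frac{k_t(r)}{f(r)}\partial_\theta\bigr)$, this evaluates to $rg(r)-rf'(r)g(r)\frac{k_t(r)}{f(r)}$, which is strictly positive because $g(r)>0$, $f'(r)<0$, and $k_t(r)=(1-t)h(r)r+tr\ge0$ (indeed $k_t\ge0$ since $h\ge0$ and $r>0$). Thus positivity holds uniformly in $t$.

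The main (very mild) obstacle is simply confirming that the $\partial_r$-component bookkeeping in $Y_\cap$ and the cancellation on $Y_V$ go through without sign errors once $t$ is inserted; there is no new analytic input, because the structures $\xi^t,J^t$ were defined so as to interpolate through genuinely tame data, and the inequalities one checks are of the same shape as in Propositions~\ref{prop:direction} and the preceding tameness proposition, with $t$ appearing only in a nonnegative convex combination. I would also remark that, since $J^t$ is the good (hence compatible, hence $\mathrm{d}(e^s\alpha)$-tame) structure on $Y_{\disk}$ and is $\alpha$-tame in the sense of \Cref{def:tame} on all of $Y_f$, the associated $s$-invariant almost complex structure on $\widehat Y_f$ extends to a tamed almost complex structure on the trivial cobordism by the same interpolation argument used for $J^F$, which is the use to which this proposition is put.
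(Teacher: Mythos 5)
Your proof follows the paper's argument essentially verbatim: reduce to $Y_V$ and $Y_\cap$ using their parametrizations (the $Y_{\disk}$ case being trivial since $\xi^t=\xi$ and $J^t$ is the good, compatible structure there for all $t$), and on $Y_\cap$ your reduction to $rg(r)-rf'(r)g(r)\frac{(1-t)h(r)r+tr}{f(r)}>0$, using $g>0$, $f'<0$ and $(1-t)h(r)r+tr\ge 0$, is exactly the paper's computation. The one inaccuracy is on $Y_V$: the cross term $\rd f\wedge\rd\theta\bigl(X-t\tfrac{\lambda(X)}{f}\partial_\theta,\,J_VX-t\tfrac{\lambda(J_VX)}{f}\partial_\theta\bigr)=-\tfrac{t}{f}\bigl(\rd f(X)\lambda(J_VX)-\rd f(J_VX)\lambda(X)\bigr)$ does \emph{not} cancel (not even at $t=1$); the paper absorbs it into $\rd\lambda(X,J_VX)>0$ using the standing assumption that $f-1$ is $C^2$-small, so that $\rd f$ is small, and you should invoke that smallness rather than a cancellation. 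With that correction the argument is complete and coincides with the paper's.
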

\begin{proof}
    It suffices to check the tameness on $Y_V$ and $Y_\cap$, both using their parameterizations. Over $Y_V$, 
    $$\rd\alpha\left(X-t\frac{\lambda(X)}{f}\partial_{\theta},J^t\left(X-t\frac{\lambda(X)}{f}\partial_{\theta}\right)\right)=\rd \lambda (X,J_VX)+\rd f \rd \theta \left(X-t\frac{\lambda(X)}{f}\partial_{\theta}, J_VX-t\frac{\lambda(J_VX)}{f}\partial_{\theta}\right).$$
    As long as $\rd f$ is sufficiently small, it is positive for $X\ne 0$ and $t\in [0,1]$.
    Over $Y_\cap$, $\alpha = f\rd \theta+r\alpha_{\Gamma}$. For $X\in \xi_{\Gamma}$ and $Y\in \left \langle r\partial_r, R_{\Gamma}-\frac{(1-t)h(r)r+tr}{f(r)}\partial_{\theta} \right \rangle$, we have
        $$\rd \alpha (X+Y,J^tX+J^tY)=r\rd\alpha_{\Gamma}(X,J_VX)+(\rd r \alpha_{\Gamma}+f'(r)\rd r \rd \theta) (Y,J^tY).$$
    It suffices to prove 
    $$(\rd r \alpha_{\Gamma}+f'(r)\rd r \rd \theta) (r\partial_r, J^t(r\partial_r))>0$$
    Indeed, it is $rg(r)-rf'(r)g(r)\frac{(1-t)h(r)r+tr}{f(r)}>0$. 
\end{proof}

Using such an interpolation $(\xi^t,J^t)$, we can find an almost complex structure $J^I$ on $\widehat{Y}_f$, such that 
$J^IR=-C(s)\partial_s$, $J|_{\xi^t}=J^t$ on the slice of $\{t\}\times Y_f\subset \widehat{Y}_f$ and $J^I=J^1,J^0$ on slices above $s=1$ and below $s=0$ respectively. For $C(s)\ll 1$ when $s\le 1$, we get an almost complex structure on the symplectization tamed by $\rd (e^s\alpha)$ with cylindrical almost complex structure at two ends and $\rd\alpha(X,JX)\ge 0$. As a consequence, we have $\int u^*\rd \alpha\ge 0$ for any holomorphic curve $u:\Sigma \to \widehat{Y}_f$, with inequality holds if and only if $u$ is contained in $\R_s\times \mathrm{im}(\gamma)$ for a Reeb orbit $\gamma$ as the usual case.

\section{Holomorphic curves}\label{s4}
\subsection{Holomorphic curves and intersection number}
Let $V$ be a Liouville domain and $U$ a codimension $2$ symplectic submanifold as in \Cref{thm:V}. We write $Y:=\partial(V\times \disk)$ and $H:=\partial(U\times \disk)$ with contact forms described in \S \ref{s3}. For a critical point $p$ of $f$, which gives a simple Reeb orbit $\gamma_p\subset V\times \Sphere^1\subset \partial(V\times \disk)$, we fix a parametrization of $\gamma_p$ and define the moduli space
$$\cM_J(\gamma_p):= \left\{u:\C \to \widehat{V\times \disk},\theta \in \Sphere^1 \left|\begin{array}{c}
     \overline{\partial}_J u=0,[\pi\circ u]=0\in H_2(V;\Q),  \\
     \displaystyle \lim_{s\to \infty}u(s+\mathbf{i}t)=\gamma_p(Tt+\theta) 
\end{array} \right.\right\}/\mathrm{Aut}(\C),$$
where we use $e^{2\pi(s+\mathbf{i}t)}$ coordinate on $\C$, $T$ is the period of $\gamma_p$ and $\pi$ is the projection from $\widehat{V\times \disk}=\widehat{V}\times \C$ to $\C$.
$\pi\circ u$ induces a map $\Sphere^2\to \widehat{V}$ and $[\pi\circ u]$ is the pushforward of the fundamental class of $[\Sphere^2]$. The assumption $[\pi\circ u]=0$ is equivalent to that $u$ is homologous (relative to the boundary/asymptotics) to the  $\C$-fiber $\widehat{V}\times \C$. For $q\in \widehat{V\times \disk}$, we define
$$\cM_J(\gamma_p,q):= \left\{u:\C \to \widehat{V\times \disk },\theta\in \Sphere^1 \left|
\begin{array}{c}
  \overline{\partial}_J u=0, u(0)=q, [\pi\circ u]=0\in H_2(V;\Q),\\
   \displaystyle \lim_{s\to \infty}u(s+\mathbf{i}t)=\gamma_p(Tt+\theta)
\end{array}
\right.\right\}/\mathrm{Aut}(\C,0),$$
where $\mathrm{Aut}(\C,0)$ is the automorphism group of $\C$ preserving $0$.  A priori, the virtual dimension of $\cM_J(\gamma_p)$ depends on the homology class of $u$ if $c_1(V)\ne 0$. If $[\pi\circ u]=0$, then $\virdim \cM_J(\gamma_p,q)= \ind(p)-\dim V$, where $\ind(p)$ is the Morse index of $p$, see \cite[Theorem 6.3]{ADCI}.  

More generally, let $\rho:M\to \widehat{V\times \disk}$ be a smooth map from a manifold $M$, we define 
$$\cM_J(\gamma_p,\rho):= \left\{\begin{array}{c}
     u:\C \to \widehat{V\times \disk },  \\
     m\in M,\theta \in \Sphere^1
\end{array}\left|
\begin{array}{c}
  \overline{\partial}_J u=0,u(0)=\rho(m),[\pi\circ u]=0\in H_2(V;\Q), \\
  \displaystyle \lim_{s\to \infty}u(s+\mathbf{i}t)=\gamma_p(Tt+\theta)
\end{array}
\right.\right\}/\mathrm{Aut}(\C,0).$$
As $\gamma_p$ is a simple Reeb orbit, the above moduli spaces can be assumed to be cut out transversely for a generic choice of $J$ by the somewhere injective property. We use $\overline{\cM}_{\bullet}(\bullet,\bullet)$ to denote a compactification, which is the canonical SFT compactification of $\cM_{\bullet}(\bullet,\bullet)$ if $M$ is compact. In general, a compactification depends on a compactification of $M$ in the context.
\begin{proposition}\label{prop:intersection}
Let $W$ be a symplectic filling of $H$ inside $V\times \disk$. For any $L$-simple almost complex structure $J$ on $\widehat{V\times \disk}$ making $\widehat{W}$ into a holomorphic hypersurface as in \S \ref{s3}, we have $u\bullet W=0$ for $u\in \cM_J(\gamma_p)$. Then $u$ is either contained in $\widehat{W}$ or $u$ does not intersect $\widehat{W}$ by \Cref{cor:leaf}.
\end{proposition}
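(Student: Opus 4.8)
The plan is to evaluate $u\bullet W$ by a topological computation, exploiting that $u\bullet W=u_\tau\cdot\widehat W$ depends only on the homotopy class of $u$ as a curve convergent to $\gamma_p$ and on the homology class of $\widehat W$ relative to its cylindrical end, while the push-off $u_\tau$ is determined by $u$ together with the $L$-simple model of the contact form near $\gamma_p$ alone. The first step is to replace the unknown filling $W$ by the model $U\times\disk$: since $V$ is a compact $2n$-manifold with nonempty boundary, $H_{2n}(V\times\disk;\Z)=H_{2n}(V;\Z)=0$, so the two fillings $W$ and $U\times\disk$ of $H=\partial(U\times\disk)$ inside $V\times\disk$ represent the same class in $H_{2n}(V\times\disk,\partial(V\times\disk);\Z)$; after completion, $\widehat W$ and $\widehat U\times\C$ (with $\widehat U\subset\widehat V$ the completion of $U$) coincide near infinity (both equal the symplectization end $\R_+\times H$ there) and are homologous relative to that end. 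By the hypothesis on $J$ and the contact form, the $L$-simple model near $\gamma_p$ is the same for $W$ and for $U\times\disk$, so the push-off rule produces the same $u_\tau$, and since $u_\tau$ is disjoint from $\R_+\times H$ near its puncture, $u\bullet W=u_\tau\cdot\widehat W=u_\tau\cdot(\widehat U\times\C)=u\bullet(U\times\disk)$.

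The second step is to identify the homology class of $u$. The projection $\pi$ onto the $\widehat V$-factor is a homotopy equivalence, hence an isomorphism on $H_2$; applying it to the $2$-cycle $u-C_p$, where $C_p=\{p\}\times\C$ is the vertical plane over $p$ (which is asymptotic to $\gamma_p$ and has constant, hence nullhomologous, $\widehat V$-projection), we get $[\pi\circ(u-C_p)]=[\pi\circ u]=0$, so $u$ and $C_p$ are homologous rel $\gamma_p$. Thus $u\bullet(U\times\disk)$ depends only on this relative class and on the asymptotic framing. If $p\notin U$, then $p\notin\widehat U$, so $C_p$ is disjoint from $\widehat U\times\C$ and $u\bullet(U\times\disk)=C_p\cdot(\widehat U\times\C)=0$. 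If $p\in U$, then near $\gamma_p$ we are in the $L$-simple chart with $f=Q-\epsilon(x_n^2+y_n^2)$ and $\widehat U\times\C=\{x_n=y_n=0\}$; by \Cref{ex:eigen} the leading eigenfunction $\eta_{-1}$ is a nonzero constant vector $w_0$ in the normal plane, so picking a smooth bounded $\phi\colon\C\to(0,\infty)$ that decays like $e^{a_{-1}s}$ at the puncture, the plane $v(z):=(p+\phi(z)\,w_0,\,z)$ is smooth, asymptotic to $\gamma_p$ with the $\eta_{-1}$-leading behaviour dictated by the push-off, homologous rel $\gamma_p$ to $C_p$ (hence to $u$), and disjoint from $\widehat U\times\C$ because $\phi>0$. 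Either way $u\bullet(U\times\disk)=v_\tau\cdot(\widehat U\times\C)=0$, so combined with the first step $u\bullet W=0$, and \Cref{cor:leaf} gives the stated dichotomy.

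The delicate point, as I see it, is not any single calculation but the bookkeeping behind the two substitutions: one must verify that $u\bullet W$ is genuinely a function only of the relative homology class of $u$ rel the \emph{framed} orbit $\gamma_p$ and of the relative homology class of $\widehat W$ rel $\R_+\times H$, so that passing from $\widehat W$ to $\widehat U\times\C$ and from $u$ to $v$ is allowed; and, when $p\in U$, that the positive normal perturbation $v$ carries exactly the framing the push-off rule assigns to $\gamma_p$, so that $v_\tau\cdot(\widehat U\times\C)$ computes the same integer as $u\bullet(U\times\disk)$. Here \Cref{ex:eigen} is the crucial input: because $\wind(a_{-1})=0$ and $\eta_{-1}$ is constant, the push-off direction is an honest constant normal field along which the vertical plane can be displaced off the model hypersurface $\widehat U\times\C$, which is what forces the intersection number to vanish.
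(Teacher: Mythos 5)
Your argument is correct, and it shares the paper's essential inputs --- the topological invariance of $u\bullet W$, the constraint $[\pi\circ u]=0$ pinning down the relative class of $u$, and \Cref{ex:eigen} identifying the push-off as a constant normal displacement of $\gamma_p$ off $U$ --- but the execution is genuinely different. The paper keeps $\widehat{W}$ fixed and replaces $u$ (resp.\ $u_\tau$ when $p\in U$) by a homologous cap lying entirely in $\partial(V\times\disk)$ and its cylindrical end: the cylinder of $\Sphere^1$-fibres over a path in $V\setminus U$ from $p$ (resp.\ from the pushed-off point) out to $\partial V$, closed up by a disk fibre of $Y_{\disk}$. Since $\widehat{W}$ meets that region only along $H$ and $\R_+\times H$, the cap is manifestly disjoint from $\widehat{W}$, and nothing about $W$ is used beyond its being a filling of $H$. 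You instead perform two substitutions: first you trade $\widehat{W}$ for the standard filling $\widehat{U}\times\C$, using $H_{2n}(V\times\disk;\Z)=H_{2n}(V;\Z)=0$ (valid, as $V$ is compact with nonempty boundary) to see the two fillings of $H$ are homologous rel their common cylindrical end; then you trade $u$ for the vertical plane $C_p$, or for its positive normal graph $v$ when $p\in U$. This buys a completely explicit model computation, at the cost of one extra check the paper's route avoids: that $u\bullet W$ is invariant not only under relative homotopy of $u$ (the fact quoted from Wendl) but also under relative-homology replacement of the hypersurface. That check does go through --- $u_\tau$ meets $\widehat{W}$ in a compact set and the bounding $(2n+1)$-chain is compact, so the usual truncate-and-cap argument applies --- but it should be stated, since it is not among the facts recorded in \S\ref{ss:intersection}. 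With that point made explicit, your proof is a sound alternative to the one in the paper.
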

\begin{proof}
If $p\notin U$, by pushing $\gamma_p$ to a $\Sphere^1$ fiber over a point in $\partial V$ following a path from $p$ to the point in $\partial V$ in $V$ without touching $U$, we can find a bounding disk $v$, which is (rationally) homologous to $u$ and is contained in $\partial(V\times \disk)$ and does not intersect $W$, As a consequence, $0=u\cdot \widehat{W}= u\bullet W$. In this case, $u$ does not intersect $\widehat{W}$. When $p\in U$, by \Cref{ex:eigen}, $u_{\tau}$ is the push of $u$ in the normal direction of $U\subset V$ at $p$. Hence by a similar argument as in $p\notin U$ case, we have $u_{\tau} \cdot \widehat{W}=0$ and hence $u\bullet W=0$.
\end{proof}

\begin{remark}
    We neither assume $c_1(V)=0$ nor $H_2(V;\Q)=0$ in \Cref{thm:V}, even though they are both satisfied in the standard sphere case. Because of this, we need to keep track of the homology class of the holomorphic curves as above for the purpose of virtual dimensions as well as intersection numbers above. This leads to the complication of introducing the foliation in \S \ref{ss:foliation}.
\end{remark}

\subsection{Linearized contact homology and $S^1$-equivariant symplectic cohomology}
\subsubsection{Basics of the two invariants} Here we briefly recall the concept of contact homology \cite{BH,Pardon} and linearized contact homology. Let $(Y,\xi)$ be a $2n-1$ dimensional contact manifold. We fix a contact form $\alpha$ such that all Reeb orbits are non-degenerate. For each good Reeb orbit $\gamma$ we associate it with a formal variable $q_{\gamma}$. We use $V$ to denote the $\Z/2$ graded $\Q$ vector space generated by $q_{\gamma}$, with grading $|q_{\gamma}|=\mu_{CZ}(\gamma)+n-3$. By counting rigid punctured holomorphic curves with one positive puncture and multiple negative punctures in the symplectization $\widehat{Y}=\R\times Y$, we can get degree $-1$ operations
$$\partial_k:V\to S^kV=(\otimes^k V)/\Sigma_k$$
such that $\partial = \sum_{k=0}^\infty \partial_k$ on $SV=\oplus_{k=0}^\infty S^kV$ defined by the Leibniz rule 
$$\partial(ab)=\partial(a)b+(-1)^{|a|} a\partial(b)$$
determines a differential graded algebra (dga) $(SV,\partial)$. The homology of $(SV,\partial)$ is called the contact homology of $Y$, which is a contact invariant \cite{BH,Pardon}. 

An augmentation $\epsilon$ is a dga map from $(SV,\partial)$ to $(\Q,0)$, i.e.\ a map $\epsilon:V\to \Q$ such that $\sum \epsilon^k \circ \partial_k:V\to \Q$ is zero. Given an augmentation $\epsilon$, we have an algebra isomorphism $F_{-\epsilon}:SV\to SV$ determined by $F_{-\epsilon}(x)=x-\epsilon(x)$. The inverse of $F_{-\epsilon}$ is $F_{\epsilon}$. Under such change of coordinate, $F_{\epsilon}\circ \partial \circ F_{-\epsilon}$ is again induced from a sequences of operations $\partial_{\epsilon,k}:V\to S^kV$ with the significance that $\partial_{\epsilon,0}=0$. As a consequence, we have $\partial_{\epsilon,1}$ is a differential on $V$. Then we can define the linearized contact homology w.r.t.\ $\epsilon$ by $\LCH_{*}(Y,\epsilon)=H_*(V,\partial_{\epsilon,1})$. The linearized contact homology certainly depends on the augmentation, but the set of linearized contact homology after enumerating through all augmentations is a contact invariant by \cite{chaidez2024contact}.

A closely related invariant called $\SS^1$-equivariant positive symplectic cohomology was originally introduced by Viterbo \cite{zbMATH01398851}, described in an alternative way by Seidel \cite{zbMATH05504309}, and systematically defined and studied by Bourgeois and Oancea \cite{BO,BO_Gysin,BO_S1}. For this, we need a Liouville filling $W$ of the contact manifold. By considering a quadratic Hamiltonian on the competition $\widehat{W}$, which is $C^2$-small on $W$, the $\SS^1$-equivariant Hamiltonian-Floer cohomology generated by the non-constant Hamiltonian orbits is the $\SS^1$-equivariant positive symplectic cohomology $SH^*_{+,\SS^1}(W)$. It is $\Z/2$ graded by $n-\mu_{CZ}$. $\SS^1$-equivariant positive symplectic cohomology of Liouville fillings can be defined over $\Z$ and hence any coefficient ring. We refer readers to \cite{BO_Gysin,BO_S1,CO,zhao} for its constructions, properties, as well as various variants of the invariant.
Both linearized contact homology and $\SS^1$-equivariant symplectic cohomology have a filtered version, i.e.\ the (co)homology of the subcomplex generated by Reeb orbits with period up to a threshold $D$. We use $\LCH_{*}^{<D}(Y,\epsilon)$ to denote the filtered linearized contact homology and $SH^{*,<D}_{+,\SS^1}(W)$ the filtered $\SS^1$-equivariant positive symplectic cohomology. As both invariants use the information of period, they depend on the contact form on $Y$ and $\partial W$ respectively. 

From the tautological long exact sequence, we have a connecting map $SH_{+,\SS^1}^*(W)\to SH_{0,\SS^1}^*(W)[1]=H^*(W)[1]\otimes \Z[u,u^{-1}]/\langle u \rangle =H^*(W)[1]\otimes \Z[u^{-1}]$, where $\deg u=2$. We can project it further to $H^*(W)$ to obtain a map $SH_{+,\SS^1}^*(W)\stackrel{\tau}{\to}H^{*}(W)[1]$. A similar map can be defined for linearized contact homology as well. Let $W$ be a Liouville filling. We can put a Morse function $g$ on $W$ such that the gradient of $g$ intersects transversely with $\partial W$ and points outward. By counting holomorphic planes in $\widehat{W}$ with one marked point intersect the stable manifold of a critical point (i.e.\ those points converging to the critical point following the gradient flow), we can define a map $\LCH_*(Y,\epsilon_W) \stackrel{\tau}{\to} H^{2n-2-*}(W)$. We will review all of these moduli spaces in the special setting of $W=V\times \disk$ in the proof of \Cref{thm:BO} below.

\subsubsection{The Bourgeois-Oancea isomorphism}
Following the functoriality of contact homology, one geometric way to obtain an augmentation is via counting holomorphic planes in a Liouville filling. In \cite{BO}, Bourgeois and Oancea showed that $\LCH_{2n-3-*}(Y,\epsilon_{W})$ is isomorphic to $SH^{*}_{+,\SS^1}(W)$, where $\epsilon_W$ is the augmentation from the filling $W$, see also \cite{BO_S1}.

The following is a filtered version of  Bourgeois and Oancea's theorem applied to the Liouville domain $V\times \disk$ with contact form described in \S \ref{s3} along with maps to $H^*(V\times \disk)$, which does not appeal to any virtual techniques because of the period threshold.
\begin{theorem}[\cite{BO}]\label{thm:BO} 
We have the following commutative diagram
    $$
    \xymatrix{
     \LCH^{<2\pi+\delta}_{2n-3-*}(Y_f,\epsilon_{V\times \disk}) \ar[r]^{\simeq }_{\Phi}\ar[d]^{\tau} & SH^{*,<2\pi+\delta}_{+,\SS^1}(V\times \disk) \ar[d]^{\tau}\\
     H^{*+1}(V\times \disk) \ar[r]^{=} & H^{*+1}(V\times \disk)
    }
    $$
    where all cohomology/homology are defined over $\Z$. 
\end{theorem}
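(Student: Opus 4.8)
\textbf{Proof proposal for \Cref{thm:BO}.}

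The plan is to adapt the original Bourgeois--Oancea construction to the specific Liouville domain $V \times \disk$ equipped with the contact form from \S\ref{ss:L-simple-contact}, taking advantage of the fact that we only work below the action threshold $2\pi + \delta$, so that the only relevant Reeb orbits are the simple orbits $\gamma_p$ over critical points $p$ of $f$ and their first iterates, and hence no virtual perturbation machinery is needed. First I would set up the three chain complexes explicitly: (i) the filtered linearized contact homology chain complex of $Y_f$, generated by the good Reeb orbits $\gamma_p$ (and iterates) of period $< 2\pi + \delta$, with differential $\partial_{\epsilon_{V\times\disk},1}$ counting rigid holomorphic cylinders in the symplectization augmented by rigid planes in $\widehat{V\times\disk}$ (which exist by the moduli space analysis of \S\ref{s4}, cf.\ \cite[Theorem 6.3]{ADCI}); (ii) the filtered $\SS^1$-equivariant positive symplectic cochain complex $SC^{*,<2\pi+\delta}_{+,\SS^1}(V\times\disk)$, computed from a small quadratic Hamiltonian whose nonconstant $1$-periodic orbits correspond, via the standard Morse--Bott / cascade description, to the same $\gamma_p$'s together with the $\SS^1$-equivariant parameter; (iii) the Morse cochain complex of $V \times \disk$ with respect to a Morse function $g$ whose gradient points outward along the boundary.

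Next I would construct the isomorphism $\Phi$ by a chain-level map. Following Bourgeois--Oancea (and the cleaner treatment in \cite{BO_S1}), the map sends a Reeb orbit generator $\gamma_p$ to the corresponding $\SS^1$-Floer orbit, and one checks that it is a chain map by the standard neck-stretching / SFT-compactness argument degenerating the Floer cylinders in $\widehat{V\times\disk}$ into buildings consisting of a cylinder-with-planes in the symplectization end (contributing the LCH differential) glued to pieces in $V\times\disk$. Because all orbits in play have period $<2\pi+\delta$ and $\gamma_p$ is simple (so the relevant moduli spaces are cut out transversely for generic $J$ as noted in \S\ref{s4}), every moduli space appearing is a genuine transversely cut out manifold of the expected dimension, and the usual gluing/compactness gives that $\Phi$ is a chain isomorphism; the grading shift $2n-3-*$ versus $*$ is a bookkeeping check on the Conley--Zehnder indices, $|q_{\gamma_p}| = \mu_{CZ}(\gamma_p) + n - 3$ against the $n - \mu_{CZ}$ grading on the symplectic cohomology side.

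Then I would define the two vertical maps $\tau$ simultaneously at chain level. On the $\SS^1$-equivariant side, $\tau$ is the connecting map of the tautological exact sequence composed with the projection $SH^*_{0,\SS^1}(V\times\disk)=H^*(V\times\disk)\otimes\Z[u^{-1}]\to H^*(V\times\disk)$; at the chain level this counts Floer cylinders from a nonconstant orbit asymptotic to a constant orbit near a critical point of $g$, with one end of $\SS^1$-weight zero. On the LCH side, $\tau$ counts, as recalled in the text, rigid holomorphic planes in $\widehat{V\times\disk}$ with a marked point constrained to the stable manifold of a critical point of $g$. The commutativity of the square then reduces to identifying these two counts: this is exactly the content that both are governed by the same degenerated holomorphic buildings (a plane with a marked point, possibly with extra negative ends capped by planes from the augmentation), so a neck-stretching argument on $\widehat{V\times\disk}$, together with the cascade description relating Floer cylinders near constants to gradient flow lines of $g$, yields $\tau \circ \Phi = \tau$ on the nose (or up to chain homotopy, which suffices on (co)homology).

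The main obstacle I expect is \emph{not} the transversality---that is handled by the period cutoff and somewhere-injectivity of the simple orbits $\gamma_p$---but rather the careful matching of orientations and grading conventions across the three theories so that the diagram commutes with the correct signs, and the precise identification of the Morse--Bott cascade moduli spaces for the $\SS^1$-equivariant Hamiltonian-Floer theory with the SFT-style moduli spaces of punctured curves. In particular one must verify that the augmentation $\epsilon_{V\times\disk}$ used to linearize the contact homology differential is exactly the one induced by the filling $V\times\disk$ with this contact form, and that the chain homotopies implementing independence of auxiliary choices ($J$, $g$, the quadratic Hamiltonian) respect the action filtration at level $2\pi+\delta$; since $\Gamma = \partial V$ has very long Reeb orbits and $f-1$ is $C^2$-small, all such homotopies can be taken through data for which the only orbits below threshold remain the $\gamma_p$, so the filtered statement follows from the unfiltered Bourgeois--Oancea theorem restricted to this action window. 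I would therefore present the proof as: (1) describe the three complexes below threshold; (2) cite/adapt \cite{BO,BO_S1} for $\Phi$ being a filtered chain isomorphism; (3) define both $\tau$'s at chain level and prove commutativity by a single neck-stretching degeneration identifying the relevant buildings; (4) remark that no virtual techniques are needed because every moduli space is regular in this range.
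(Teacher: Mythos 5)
Your proposal is correct and follows essentially the same route as the paper: set up the three complexes below the action threshold where all orbits are simple and transversality is generic, adapt the Bourgeois--Oancea map $\Phi$ (an interpolating-Hamiltonian count whose leading term in the period filtration is the tautological identification of generators), and prove commutativity of the square by neck-stretching the moduli space defining $\tau$ on the symplectic-cohomology side. Two small points where the paper is more precise: below $2\pi+\delta$ there are \emph{no} iterates of the $\gamma_p$ and no augmentation planes contribute to any differential (so the complexes are even simpler than you describe), and the paper makes the commutativity argument concrete by explicitly constructing the homotopy inverse $\Psi$ and showing that stretching the $SH$-side $\tau$-moduli space degenerates it into $\Psi$-cylinders glued to the LCH-side $\tau$-planes.
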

\begin{proof}
The proof is essentially contained in \cite{BO,BO_S1}. In the following, we will review the arguments in \cite{BO,BO_S1} and explain how it works for \Cref{thm:BO}. Some notational discrepancies and further comments are contained in \Cref{rmk:furthur} below.
\begin{enumerate}
    \item\label{step1} With a contact form in \S \ref{s3}, $\LCH^{<2\pi+\delta}_{2n-3-*}(Y_f,\epsilon_{V\times \disk})$ involves only simple Reeb orbits, therefore all the moduli spaces involved can be arranged to be cut out transversely for a generic choice of $J$. Moreover, as none of the curves can be multiply covered, the curve counting can be defined over $\Z$.  More precisely, the chain complex is the $\Z$-module generated by $q_{\gamma_{p}}$. Throughout the proof, we fix a parametrization of the Reeb orbit $\gamma_p$. The differential from $q_{\gamma_{p_+}}$ to $q_{\gamma_{p_-}}$ is the counting of rigid points in
    \begin{equation}\label{eqn:LCH}
      \left\{u:\R_s\times \SS^1_t \to \widehat{\partial(V\times \disk)},\theta_{\pm}\in \SS^1 \left|\begin{array}{c}
         \partial_su+J\partial_t u=0,  \\
         \displaystyle \lim_{s\to \pm \infty } u(s,t) =(\pm \infty, \gamma_{p_{\pm}}(T_{\pm}t+\theta_{\pm}))
    \end{array}  \right.\right\}/(\C^*\times \R)  
    \end{equation}
    Here $T_{\pm}$ are the periods of $\gamma_{p_{\pm}}$, $\theta_{\pm}$ are the two asymptotic markers at two ends, the $\C^*$-action is the automorphism group of $\R\times S^1$ which also acts on the asymptotic markers, the $\R$-action is the translation action on the target $\widehat{\partial(V\times \disk)}$. As all Reeb orbits have periods approximately $1$, there is no appearance from the augmentation $\epsilon_{V\times \disk}$.
    \item One way to define $SH^{*,<2\pi+\delta}_{+,\SS^1}(V\times \disk)$ is as follows. Consider a Hamiltonian $H$ on the completion $\widehat{V\times \disk}=(V\times \disk)\cup (1,+\infty)_r\times Y_f$ that is zero $V\times \disk$ and $H=h(r)$ on $(1,+\infty)_r\times Y_f$, such that $h'(r)>0,h''(r)\ge 0$ on $(1,\infty)$ and $h'(r)=1$ for $r\ge 1+\delta$. The non-constant Hamiltonian orbits of $X_H$ come in $\SS^1$-families which are one-to-one corresponding to Reeb orbits of $Y_f$ of period at most $1$, i.e.\ those $\gamma_p$ in \S \ref{s3}. We use $\overline{\gamma}_p$ to denote the corresponding $\SS^1$-family of Hamiltonian orbits. The cochain complex for $SH^{*,<2\pi+\delta}_{+,S^1}(V\times \disk)$ are the $\Z$-module generated by $\overline{\gamma}_p$, with differential from $\overline{\gamma}_{p_+}$ to $\overline{\gamma}_{p_-}$ is defined by counting rigid points in
    \begin{equation}\label{eqn:S^1}
        \left\{u:\R_s\times \SS^1_t \to \widehat{V\times \disk}\left|\begin{array}{c}
         \partial_su+J(\partial_t-X_H(u))=0,  \\
         \displaystyle \lim_{s\to \pm \infty} u \in \overline{\gamma}_{p_{\pm}}
    \end{array} \right.\right\}/\R\times \SS^1
    \end{equation}
    Here we use a $t$-independent almost complex structure $J$, hence $\SS^1$ acts on the space of solutions by reparameterization in the $\SS^1$-direction. We can achieve transversality for those curves as all orbits are simple by \cite[Proposition 3.5 (i)]{BODuke}. The counting is again over $\Z$ as there is no orbifold point in the moduli space. This indeed computes $SH^{*,<2\pi+\delta}_{+,\SS^1}(V\times \disk)$, which is in generally true if we can achieve $\SS^1$-equivariant transversality (i.e.\ choosing a $t$-independent almost complex to achieve transversality), see \cite[\S 5.2.2, Proposition 5.9]{Zhou22} for the derivation of this fact using a cascades version of the Borel construction of the $\SS^1$-equivariant symplectic cohomology \cite{BO_S1}. 
    \item As done in \cite[\S 5]{BO}, we can apply neck-stretching to \eqref{eqn:S^1} along the contact boundary $Y_f$ to get an alternative definition of $SH^{*,<2\pi+\delta}_{+,\SS^1}(V\times \disk)$ by counting 
    \begin{equation}\label{eqn:S_1}
        \left\{u:\R_s\times \SS^1_t \to \widehat{\partial(V\times \disk)}\left|\begin{array}{c}
         \partial_su+J(\partial_t-X_H(u))=0,  \\
         \displaystyle \lim_{s\to \pm \infty} u \in \overline{\gamma}_{p_{\pm}}
    \end{array} \right.\right\}/\R\times \SS^1
    \end{equation}
    Here $H$ on $\widehat{Y}_f=\R_+\times Y_f$ is $h(r)$ such that $h=0$ for $r\in (0,1)$ and $h'(r)>0,h''(r)\ge 0$ on $(1,\infty)$ and $h'(r)=1$ for $r\ge 1+\delta$. In general, neck-stretching applied to \eqref{eqn:S^1} yields \eqref{eqn:S_1} possibly with negative punctures asymptotic to Reeb orbits. And the differential counts the punctured version of \eqref{eqn:S_1} capped off by counting rigid holomorphic planes in $\widehat{V\times\disk}$, that is the augmentation $\epsilon_{V\times \disk}$. In the special case here, there is no action room to form extra negative punctures; therefore, the differential ends up counting \eqref{eqn:S_1}. Similarly, no extra negative punctures are needed in any moduli spaces of curves in $\widehat{Y}_f$ explained below, which need to be considered and capped off by the augmentation in the general setting.  
   \item So far, the definition of $\LCH^{<2\pi+\delta}_{2n-3-*}(Y_f,\epsilon_{V\times \disk})$ is already very similar to the definition of $SH^{*,<1}_{+,\SS^1}(V\times \disk)$, except for the appearance of Hamiltonian for $SH^{*,<1}_{+,\SS^1}(V\times \disk)$. Following \cite[\S 6]{BO} (see \Cref{rmk:general} below for more detailed comparison), the map $\Phi$ from $q_{\gamma_{p_+}}$ to $\overline{\gamma}_{p_-}$is defined as the counting of 
    \begin{equation}\label{eqn:phi}
     \left\{u:\R_s\times \SS^1_t \to \widehat{\partial(V\times \disk)}, \theta_+\in \SS^1\left|\begin{array}{c}
         \partial_su+J(\partial_t-X_{H_s}(u))=0, \\
         \displaystyle \lim_{s\to \infty} u(s,t)=(\infty, \gamma_{p_+}(T_+t+\theta_+))\\
         \displaystyle \lim_{s\to - \infty} u \in \overline{\gamma}_{p_{-}}
    \end{array} \right.\right\}/\SS^1
    \end{equation}
    Here $H_s=\rho(s)H$ where $\rho=0$ for $s\gg 0$ and $\rho=1$ for $s \ll 0$ and $\rho'(s)\le 0$.  
    This map preserves the filtration induced from the periods of $\gamma_p$ and the leading term is $q_{\gamma_p}\mapsto \overline{\gamma}_p$ coming from a suitable reparameterization of the trivial cylinder over the orbit $\gamma_p$, see \cite[P.661-662]{BO}. This establishes the isomorphism $\Phi:\LCH^{<2\pi+\delta}_{2n-3-*}(Y_f,\epsilon_{V\times \disk}) \to SH^{*,<2\pi+\delta}_{+,\SS^1}(V\times \disk)$.
    \item The homotopy inverse to $\Phi$ can be defined similarly \cite[Remark 17]{BO}. More precisely, $\Psi$ from $\overline{\gamma}_{p_+}$ to $q_{\gamma_{p_-}}$ is defined by counting 
    \begin{equation}\label{eqn:phi-1}
     \left\{u:\R_s\times \SS^1_t \to \widehat{\partial(V\times \disk)}, \theta_-\in \SS^1\left|\begin{array}{c}
         \partial_su+J(\partial_t-X_{H}(u))=0, \\
         \displaystyle \lim_{s\to -\infty} u(s,t)=(-\infty, \gamma_{p_-}(T_-t+\theta_-))\\
         \displaystyle \lim_{s\to \infty} u \in \overline{\gamma}_{p_{+}}
    \end{array} \right.\right\}/\R\times \SS^1
    \end{equation}
    This map also preserves the period filtration and the leading term is $\overline{\gamma}_p\mapsto q_{\gamma_p}$, see \cite[Proposition 4.7]{zhou24}. It is indeed the inverse to $\Phi$ up to (filtered) homotopy (claimed in \cite[Remark 17]{BO}): the curve counting of $\Psi\circ\Phi$ corresponds to a Floer breaking of the following moduli space
    \begin{equation}\label{eqn:homotopy}
     \left\{u:\R_s\times \SS^1_t \to \widehat{\partial(V\times \disk)}, \theta_{\pm}\in \SS^1\left|\begin{array}{c}
         \partial_su+J(\partial_t-X_{H_s}(u))=0, \\
         \displaystyle \lim_{s\to \pm \infty} u(s,t)=(\infty, \gamma_{p_{\pm}}(T_{\pm}t+\theta_{\pm}))
    \end{array} \right.\right\}/\SS^1
    \end{equation}
    Besides the SFT breakings at the two ends corresponding to compositions with differentials in step \eqref{step1}, the only remaining end of \eqref{eqn:homotopy} is set of $u$ where the $r$ coordinate of $u(s_0)$ is smaller than $1$ where $\rho(s_0)=0$. By maximal principle, $r\circ u(s\le s_0)\le 1$, as a consequence, the equation is just $\partial_su+J\partial_tu=0$ in \eqref{eqn:LCH}. Therefore, the limit of such an end is precisely \eqref{eqn:LCH} without quotienting out the target $\R$-translation, i.e.\ rigid curves in the symplectization viewed as the completion of the trivial cobordism. In particular, counting those gives the identity map $q_{\gamma_p}\mapsto q_{\gamma_p}$ from the trivial cylinder, as other curves are never rigid from the $\R$-translation on the target. This shows that  $\Psi\circ\Phi$ is homotopic to $\Id$. As $\Phi,\Psi$ are isomorphisms from the filtration perspective, we know that $\Phi\circ\Psi$ is also homotopic to $\Id$.
    \item Finally, the map $\tau:SH^{*,<2\pi+\delta}_{+,\SS^1}(V\times \disk)\to H^*(V\times \disk)[1]$ can be defined as follows. We fix a Morse function $g$ on $V\times \disk$ such that the gradient of $g$ intersects $\partial(V\times \disk)$ transversely and points outward. For a critical point $q$, we use $S_q$ to denote the stable manifold of $q$, which is contained in the interior of $V\times \disk$. Then $\delta$ from $\overline{\gamma}_p$ to $q$ is defined by counting
     \begin{equation}\label{eqn:delta}
     \left\{u:\C \to \widehat{\partial(V\times \disk)} \left|\begin{array}{c}
         \partial_su+J(\partial_t-X_{H}(u))=0, \\
         u(0)\in S_q,
         \displaystyle \lim_{s\to \infty} u \in \overline{\gamma}_{p_{+}}
    \end{array} \right.\right\}/\R\times \SS^1
    \end{equation}
    Here the end of $\C$ is parametrized by $e^{2\pi (s+\mathbf{i}t)}$. We can apply neck-stretching to \eqref{eqn:delta}, which in the limit breaks into the product of \eqref{eqn:phi-1} with the following moduli space
    \begin{equation}\label{eqn:delta_LCH}
     \left\{u:\C \to \widehat{\partial(V\times \disk)}, \theta_-\in \SS^1\left|\begin{array}{c}
         \partial_su+J\partial_t=0,   u(0)\in S_q,\\
         \displaystyle \lim_{s\to \infty} u(s,t)=\gamma_{p_{-}(T_-t+\theta_-)}
    \end{array} \right.\right\}/\R\times \SS^1
    \end{equation}
    That is the moduli space defining $\tau:\LCH^{<2\pi+\delta}_{2n-3-*}(Y,\epsilon_{V\times \disk})\to H^{*+1}(V\times\disk)$. As a consequence, we have $\tau \circ \Phi^{-1}=\tau$ on homology. This proves the theorem.
\end{enumerate}
\end{proof}

\begin{remark}\label{rmk:furthur}
    A few remarks regarding notations in \cite{BO} and the proof of \Cref{thm:BO} are as follows:
    \begin{enumerate}
        \item The proof of \Cref{thm:BO} is greatly simplified as all Reeb orbits are simple and have approximately the same period. As a consequence, no augmentation curves are involved, and all moduli spaces are cut out transversely for a generic almost complex structure and are manifolds. In general, transversality assumptions \cite[Remark 9]{BO} were needed in \cite{BO}, or one needs to deploy suitable virtual machineries to execute the proofs in \cite{BO}.
        \item \cite{BO} used the convention $\omega(X_H,\cdot)=\rd H$ and ``non-standard"\footnote{Compared to conventions for symplectic cohomology/homology and contact homology used more often in recent literature, e.g.\ \cite{BH,CO,Pardon}} cylindrical coordinates near ends and asymptotic conditions \cite[P.628]{BO} to be consistent with the convention of the Hamiltonian vector field. One can translate between two conventions using the biholomorphism $\R\times \SS^1 \to \R\times \SS^1, (s,t)\mapsto (-s,-t)$.   
        \item \cite[\S 5-6, Proposition 4]{BO} were applied to a non-equivariant version of the linearized contact homology, whose chain complex is denoted by $BC_*(\lambda)$ in \cite{BO}. This is supposed to be the positive symplectic cohomology by \cite[Proposition 4]{BO}. The construction of positive symplectic cohomology in \cite[\S 2.2]{BO} was the cascades construction, called $BC_*(H)$,  built on \eqref{eqn:S^1} as $\SS^1$-equivariant transversality was assumed in \cite[Remark 9]{BO}. For the purpose of \Cref{thm:BO}, as we do not need the Gysin sequence, we can bypass the concept of positive symplectic cohomology, and compare $\LCH$ with $SH^*_{+,\SS^1}$ directly. 
        \item When we use an almost complex structure in \S \ref{s2}, since $\xi_J$ is $C^\infty$ close to $\xi$, the $r$-coordinate on the end of $\widehat{V\times \disk}$ is still strictly plurisubharmonic w.r.t.\ $J$, i.e.\ maximum principle for $r$ holds as well. That is, we can use such almost complex structures to define symplectic cohomology.
        \item One can use the $L$-simple almost complex structure $J^F$ from \S \ref{ss:foliation}, after a small perturbation to achieve transversality, to define $\LCH^{<2\pi+\delta}_*(Y_f,\epsilon_{V\times \disk})$ and $\tau$.
    \end{enumerate}
\end{remark}

\subsection{Holomorphic curves that probe the topology}
In the following, we will describe the holomorphic curve that will detect the topology of the knot complement. Since we will be using intersection theory, it is important to keep track of the homology classes of the holomorphic curves.

In \cite[\S 2]{Z23}, we computed $SH^{*,<2\pi+\delta}_{+}(V\times \disk) \to  H^{*}(V\times \disk)[1]$. The $\SS^1$-equivariant analogue can be done similarly. We use $p_{\max}$ to denote the unique local maximum of $f$, which is contained in $U\subset V$. The corresponding Reeb orbit $\gamma_{p_{\max}}$ has the maximal period among those $\gamma_p$ orbits. We first prove some preliminary results.

\begin{lemma}\label{lemma:non-trivial}
    Let $V$ be a compact manifold with a Riemannian metric $g$. Then there exists a constant $C>0$, such that if $u:M\to V$ from a closed manifold with $u_*[M] \ne 0 \in H_*(V;\Q)$, we have $\mathrm{vol}_g(u)>C$. The same holds for $M$ with boundary and $u:(M,\partial M)\mapsto (V,\partial V)$ representing a non-trivial class in $H_*(V,\partial V;\Q)$.
\end{lemma}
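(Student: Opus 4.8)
The plan is to prove the lower bound on volume by a compactness/contradiction argument, exploiting the fact that the set of homology classes that can be represented by bounded-volume maps is finite. First I would set up the closed-manifold case: suppose no such $C$ exists, so there is a sequence $u_k : M_k \to V$ with $(u_k)_*[M_k] \neq 0$ but $\mathrm{vol}_g(u_k) \to 0$. A priori the $M_k$ may be wildly different manifolds, but the key observation is that homology is detected by integration of closed forms, so I should reduce to testing against a finite basis of $H^*(V;\R)$ (equivalently $H^*(V,\partial V;\R)$ in the relative case). Fix closed forms $\omega_1,\dots,\omega_N$ representing a basis of the relevant cohomology. Then $(u_k)_*[M_k] \neq 0$ means $\int_{M_k} u_k^*\omega_j \neq 0$ for some $j$, and in fact, since these are integers up to normalization when the class is rational and nonzero, $\bigl|\int_{M_k} u_k^*\omega_j\bigr|$ is bounded below by a positive constant depending only on $V$ and the chosen integral basis. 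On the other hand, $\bigl|\int_{M_k} u_k^*\omega_j\bigr| \le \|\omega_j\|_{L^\infty} \cdot \mathrm{vol}_g(u_k) \to 0$ (using the comass norm of $\omega_j$ with respect to $g$ and that the pullback of a $p$-form is pointwise bounded by comass times the $p$-dimensional Jacobian, whose integral is the volume). This is the desired contradiction.

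The main steps in order are therefore: (1) reduce ``nontriviality of $u_*[M]$'' to ``some period $\int_M u^*\omega_j$ is nonzero'' for a fixed finite collection of closed forms $\omega_j$ on $V$ coming from an integral basis of $H^*(V;\Z)/\mathrm{torsion}$; (2) observe that a nonzero such period, coming from a \emph{rational} homology class, is bounded away from $0$ — more carefully, one should note the class $u_*[M]$ lies in the image of $H_*(M;\Z)\to H_*(V;\Z)$, hence is integral, so its pairing with an integral cohomology class is an integer, hence $\ge 1$ in absolute value when nonzero; (3) bound $\bigl|\int_M u^*\omega_j\bigr| \le c_j\,\mathrm{vol}_g(u)$ where $c_j$ depends only on $\omega_j$ and $g$, via the pointwise comass estimate $|u^*\omega_j| \le \mathrm{comass}(\omega_j)\cdot |\mathrm{Jac}\,u|$; (4) conclude $\mathrm{vol}_g(u) \ge 1/\max_j c_j =: C > 0$. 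For the relative case one runs the identical argument with $H^*(V,\partial V;\R)$, represented by closed forms supported away from a collar of $\partial V$ (or using relative de Rham theory), and with $u:(M,\partial M)\to(V,\partial V)$ so that Stokes' theorem still gives a well-defined pairing $\langle [\omega_j], u_*[M]\rangle = \int_M u^*\omega_j$.

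I do not anticipate a serious obstacle here; the statement is essentially the classical fact underlying Gromov's systolic-type inequalities. The only point requiring a little care is step (2): one must ensure that nonzero rational homology classes arising this way have pairings bounded uniformly below, and the cleanest way is to pass to integral classes (the pushforward of the integral fundamental class $[M]\in H_*(M;\Z)$) and pair against an integral cohomology basis, so that nonzero pairings are integers and hence $\ge 1$. A secondary subtlety is that the constant $c_j$ in step (3) must be taken with respect to the \emph{same} metric $g$ used to define $\mathrm{vol}_g(u)$, which is immediate since both sides refer to $g$. Note also that the dimension $*$ of the homology class is not fixed in the statement, so one should let $\omega_1,\dots,\omega_N$ range over a basis of the full cohomology ring $\bigoplus_p H^p(V;\R)$ (resp.\ $\bigoplus_p H^p(V,\partial V;\R)$) and take $C = 1/\max_j c_j$ accordingly; this handles all degrees simultaneously with one constant.
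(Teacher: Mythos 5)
Your proposal is correct and follows essentially the same route as the paper: bound each period $\bigl|\int_M u^*\omega_j\bigr|$ by a comass constant times $\mathrm{vol}_g(u)$ for a fixed finite basis of forms, and use that a nonzero (rationally nontrivial) integral class has some period bounded away from zero. Your step (2), passing to an integral basis so that nonzero pairings are integers $\ge 1$, just makes explicit the "universal constant" the paper invokes at the corresponding point.
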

\begin{proof}
    We pick a basis $\{\alpha_i\}$ of $H^*(V;\R)$ using differential forms. Then there exists $D>0$ such that for any $p$ and $k$ linearly independent vectors $v_1,\ldots,v_k\in T_pV$, we have 
    $$|\alpha_i(v_1,\ldots,v_k)|<D\mathrm{vol}_g(v_1,\ldots,v_k).$$
    As a consequence, we have $D\mathrm{vol}_g(u)\ge \int |u^*\alpha_i|\ge |\int u^*\alpha_i|$ for all $i$. Since $u_*[M]$ is nontrivial rationally, there is a universal constant $C$ such that $\mathrm{vol}_g(u)>C$. We consider compactly supported differential forms for the case with boundary, the proof is identical.
\end{proof}

In the following, we will consider the following $\alpha$-complex structures $J$, such that 
\begin{enumerate}
   \item $J$ is good.
    \item $\xi_J$ is $\xi^t$ described before \Cref{prop:interpolation} for some $t$. And $J$ is induced from a $\lambda$-compatible almost complex structure $J_V$ on the Liouville domain $(V,\lambda)$ via the natural identification between $\xi_J$ and $TV$ by projection.
    \item Over $Y_{\cap}$, we have $\xi_J=\xi_{\Gamma}\oplus \langle \partial_r+f'(r)\partial_p, R_{\Gamma}-v(r)\partial_{\theta} \rangle$, where $v(r)\ge 0$. We require $J$ respects the splitting. And when we restrict $J$ to $\xi_{\Gamma}$, we get a $\rd\alpha_{\Gamma}$ compatible almost complex structure $J_{\Gamma}$ from the restriction of $J_V$. $J(r\partial_r+rf'(r)\partial_p)=g(r)(R_{\Gamma}-v(r)\partial_{\theta})$, where $g$ is defined in \S \ref{sss:J^F} (so that the almost complex structure is compatible with the good condition).
\end{enumerate}
Such collection of almost complex structures includes the almost complex structure $J^F$ with a holomorphic foliation, some special $\alpha$-compatible almost complex structures, as well as the interpolation between them considered in \Cref{prop:interpolation}. The purpose of working with such almost complex structures is just to obtain uniform estimates between $\int u^*\alpha $ and the area of a projection to $\widehat{V}$ in \Cref{prop:trivial_topology}, which certainly works for many other almost complex structures.

\begin{proposition}\label{prop:trivial_topology}
    Suppose $f-1$ in \S \ref{s3} is sufficiently $C^2$ small on $V$ and $J$ is an almost complex structure as above. For any solution $u\in \eqref{eqn:LCH},\eqref{eqn:S_1},\eqref{eqn:phi-1}$, we have $u$ does not enter $\widehat{Y}_{\disk}$ and $[\pi_V\circ u]=0\in H_2(V;\Q)$, where $\pi_V$ is the projection $\widehat{Y_{V}\cup Y_\cap }\to Y_{V}\cup Y_\cap \to \widehat{V}$. Here we assume the almost complex structure is good on $Y_{\disk}$ and can either be a compatible one or the $L$-simple almost complex structures used in \S \ref{ss:foliation} to build the foliation. 
\end{proposition}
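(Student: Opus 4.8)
The plan is to combine a maximum principle confining $u$ away from the cap $\widehat{Y}_{\disk}$ with a uniform comparison between $\int_\Sigma u^{*}\rd\alpha$ and the area of the projection $\pi_V\circ u$, and then to invoke \Cref{lemma:non-trivial}; fix on $V$ the metric $g=\rd\widehat{\lambda}(\cdot,J_V\cdot)$ and let $C>0$ be the corresponding constant of \Cref{lemma:non-trivial}.

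First I would show that $\im u$ avoids $\widehat{Y}_{\disk}$. In each of \eqref{eqn:LCH}, \eqref{eqn:S_1}, \eqref{eqn:phi-1} the Hamiltonian vanishes on $\{r\le 1\}$ ($H\equiv 0$ in \eqref{eqn:LCH}, and $H=h(r)$ with $h\equiv 0$ on $\{r<1\}$ otherwise), and where it is nonzero its Hamiltonian vector field is tangent to the level sets of $r$. For the admissible almost complex structures the coordinate $r$ is plurisubharmonic on $\{r>1\}$ — this is exactly what the ``good'' condition on $Y_{\disk}$, the sign in \Cref{prop:direction} on $Y_\cap$, and the tameness computations of \S\ref{ss:foliation} and \Cref{prop:interpolation} encode, and it is the maximum‑principle input already used in the proof of \Cref{thm:BO}. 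Since $\partial_r f<0$ near $\{r=1\}$, every critical point of $f$, hence every orbit $\gamma_p$ or $\overline{\gamma}_p$ asymptotic to $u$, lies in $\{r<1\}$; so $r\circ u$ tends to a value $<1$ at each puncture and is subharmonic on $\{r\circ u>1\}$ with boundary values $\le 1$. The maximum principle forces $\{r\circ u>1\}=\emptyset$, except possibly when $r\circ u$ is constant on a component, i.e.\ $u$ maps an open set into a level hypersurface $\{r=c\}$ with $c\in(1,1+\epsilon]$ — impossible, since $J$ takes the Reeb direction out of $T\{r=c\}$, so $\{r=c\}$ is not $J$-complex and contains no open piece of a holomorphic (or, since $X_H$ is tangent to it, Floer) curve. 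Hence $r\circ u\le 1$, so $\im u\subseteq\widehat{Y_V\cup Y_\cap}$ (in fact $\subseteq\widehat{Y}_V$), and in particular $u$ never meets $\widehat{Y}_{\disk}$.

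Next, with $\im u\subseteq\widehat{Y_V\cup Y_\cap}$, I would set $\overline u:=\pi_V\circ u\colon\Sigma\to\widehat V$; it takes values in a compact region diffeomorphic to $V$, and since both punctures converge to orbits over critical points of $f$ it extends smoothly to $\overline u\colon\Sphere^2\to\widehat V$, so $[\overline u]\in H_2(V;\Q)$. On $\widehat{Y}_V$ the Hamiltonian vanishes, $u$ is honestly $J$-holomorphic, $J$ agrees with $J_V$ on $\xi_J\cong TV$, and the only obstruction to $\pi_V$ being $(J,J_V)$-holomorphic is the $TV$-component $-X_f$ of the Reeb field, which is $O(\|f-1\|_{C^1})$ and, near the asymptotics, $O(\|\Hess f\|)=O(\|f-1\|_{C^2})$ by the small‑Hessian normal form of \S\ref{ss:L-simple-contact}. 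Repeating the computation of \Cref{prop:d} then yields a uniform $c_0>0$ with $\mathrm{vol}_g(\overline u)\le c_0^{-1}\int_\Sigma u^{*}\rd\alpha+\epsilon(f)$, where $\epsilon(f)=O(\|f-1\|_{C^2})$ bounds $\|\overline\partial_{J_V}\overline u\|_{L^2}^2$ using the pointwise bounds above, the exponential convergence of $u$ to its asymptotics, and the a priori $C^1$‑bound coming from the uniformly small Hofer energy; the part of $\overline u$ lying over $Y_\cap$, if any, is controlled by a monotonicity estimate. Since $\int_\Sigma u^{*}\rd\alpha\ge 0$ equals, by Stokes, the difference of the periods of the two asymptotic orbits of $u$, and all generators $\gamma_p$ have period within $O(\|f-1\|_{C^2})$ of a common value, $\int_\Sigma u^{*}\rd\alpha=O(\|f-1\|_{C^2})$. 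For $f$ sufficiently $C^2$‑close to $1$ this makes $\mathrm{vol}_g(\overline u)<C$, so \Cref{lemma:non-trivial} forces $[\overline u]=[\pi_V\circ u]=0\in H_2(V;\Q)$.

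The hard part will be uniformity: in the first step, checking that $r$ is plurisubharmonic simultaneously for all admissible almost complex structures (including the interpolation of \Cref{prop:interpolation}) and discarding the borderline case of the maximum principle; and in the second step, producing $c_0$ and the error $\epsilon(f)$ in the area comparison uniformly over this family. This uniformity is precisely why the class of almost complex structures is restricted in the statement, and is the ``uniform estimate between $\int u^{*}\alpha$ and the area of a projection to $\widehat V$'' promised in the text; beyond it, everything reduces to standard elliptic analysis for punctured curves of uniformly bounded Hofer energy.
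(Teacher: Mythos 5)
Your second step (comparing the area of $\pi_V\circ u$ with $\int u^*\rd\alpha$ and invoking \Cref{lemma:non-trivial}) is essentially the paper's argument, though the paper gets a cleaner multiplicative bound $\mathrm{vol}_{g_V}(\pi_V\circ u)\le C\int u^*\rd\alpha$ by pushing the metric $\rd\alpha(\cdot,J\cdot)$ forward through the isomorphism $\rd\pi_V|_{\xi_J}\colon\xi_J\to TV$ on both $Y_V$ and $Y_\cap$, with no additive error term and no need for a priori $C^1$-bounds or monotonicity.

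The first step, however, has a genuine gap. You exclude $u$ from $\widehat{Y}_{\disk}$ by a maximum principle for the Liouville coordinate $r$, which requires $r$ to be $J$-plurisubharmonic on $\{r>1\}$. None of the facts you cite delivers this: tameness and the computations of \Cref{prop:interpolation} only control $\rd\alpha(v,Jv)$ on $\xi_J$ (a first-order, symplectic condition), and \Cref{prop:direction} only gives the sign of $\rd r(JR_\Gamma)$; plurisubharmonicity of $r$ is the positivity of $-\rd(\rd r\circ J)$ on complex lines, a second-order condition on $J$ that involves derivatives of the interpolating functions $g,h,f$ on $Y_\cap$ and is verified nowhere in the paper (the only plurisubharmonic function the paper uses is $e^s$ for the symplectization coordinate $s$, which is a different function). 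Moreover $\{r=1+\epsilon\}$ is not a level hypersurface inside $Y_f$: it is all of the codimension-zero piece $Y_{\disk}=\Gamma\times\{1+\epsilon\}\times\disk$, so your dichotomy ``constant on a component $\Rightarrow$ maps into a non-$J$-complex hypersurface'' does not apply there. The paper's route is different and avoids all of this: by \Cref{prop:d} the projection $\pi_{\disk}\circ u$ is honestly holomorphic on $u^{-1}(\widehat{Y}_{\disk})$, with boundary mapping to $\partial\disk$; if this domain were nonempty the map would be a proper, hence surjective, branched cover of $\disk$ and would hit $0$, contradicting that the algebraic intersection number of $u$ with $\R\times\Gamma\times\{0\}$ vanishes (both asymptotic orbits $\gamma_{p_\pm}$ link $\Gamma\times\{0\}$ once). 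You should replace your maximum-principle step with this winding/positivity-of-intersection argument, or else actually prove plurisubharmonicity of $r$ for the whole family of almost complex structures, including the interpolation $J^t$ — which I do not expect to hold without further hypotheses.
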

\begin{proof} 
We first consider the case of \eqref{eqn:LCH}. As $\gamma_p$ has winding number $1$ with $\Gamma \times \{0\}\in Y_{\disk}$, by \Cref{prop:d}, the holomorphic map $\pi_{\disk}\circ u$ must have empty image as it misses $0$. This shows that $u$ does not enter $\widehat{Y}_\disk$. Now $\int u^*\rd \alpha$ is sufficiently small as $f-1$ in \S \ref{s3} is sufficiently $C^2$ small on $V$. As the complex structure respects the splitting $\xi_J\oplus \langle R,\partial_s \rangle$ and $\rd \alpha$ vanishes on $\langle R, \partial_s \rangle$, we have $u^*\rd \alpha$ only depends on the $\xi_J$ factor of $\rd u$.

On $\widehat{Y}_{V}$, the linearization of $\pi_V:Y_V\to V$ induces an isomorphism $\rd \pi_V|_{\xi_J}:\xi_J \to TV$. The metric $g_{\xi_J}=\rd \alpha(\cdot, J\cdot)$ on $\xi_J$ will then be pushed forward by $\rd \pi_V|_{\xi_J}$ to a metic defined by  
$$|X|^2=\rd \lambda (X,J_VX)-t(Xf)\cdot \frac{\lambda (J_VX)}{f}+t((J_VX)f)\cdot \frac{\lambda(X)}{f}, \quad X\in TV.$$
When $f-1$ is $C^2$ small, such metric is $C^1$ close to $g_V=\rd \lambda (\cdot, J_V \cdot)$ on $V$. As a consequence, for $f-1$ sufficiently $C^2$ small, we have a constant $C$ such that 
$$\mathrm{vol}_{g_V}(\pi_V\circ u|_{u^{-1}(\widehat{Y}_V)})\le C \int_{u^{-1}(\widehat{Y}_V)} u^*\rd \alpha.$$

On $\widehat{Y}_{\cap}$, we have $\alpha = r\alpha_{\Gamma}+f(r)\rd \theta$ and hence $\rd \alpha = r\rd \alpha_{\Gamma}+\rd r \wedge \alpha_{\Gamma}+f'(r)\rd r \wedge \rd \theta$. We have $\xi_J=\xi_{\Gamma}\oplus \langle \partial_r+f'(r)\partial_p, R_{\gamma}-v(r)\partial_{\theta}\rangle$.The linearization of the projection $\pi_V: Y_{\cap}\to [1,1+\epsilon]_r\times \Gamma$ induces an isomorphism $\rd \pi_V|_{\xi_J}:\xi_J\to T ([1,1+\epsilon]_r\times \Gamma)$, which is identity on $\xi_{\Gamma}$ and sends $\partial_r+f'(r)\partial_p$ to $\partial_r$ and $R_{\Gamma}-v(r)\partial_{\theta} \to R_{\Gamma}$. The metric $g_{\xi_J}=\rd \alpha(\cdot, J\cdot)$ on $\xi_J$ will be pushed forward to the metric such that 
$$|X+a\partial_r+bR_{\Gamma}|^2=r\rd \alpha_{\Gamma}(X, J_{\Gamma}X)+\left(\frac{g(r)}{r}-\frac{f'(r)g(r)v(r)}{r}\right)a^2+\left(\frac{r}{g(r)}-\frac{f'(r)v(r)r}{g(r)}\right)b^2$$
As $f'(r)<0$, $\displaystyle \lim_{r\to 1+\epsilon} f'(r)=-\infty$, $v(r)>0$ on $[1,1+\epsilon]$, $g(r)\ge 1$, and $g(r)=-\frac{f'(r)}{2}$ near $1+\epsilon$,  there exists $C>0$ such that  
$$ \frac{g(r)}{r}-\frac{f'(r)g(r)v(r)}{r}\ge C, \quad \frac{r}{g(r)}-\frac{f'(r)v(r)r}{g(r)} \ge C.$$ As a consequence, we have 
$$\mathrm{vol}_{g}(\pi_V\circ u|_{u^{-1}(\widehat{Y}_\cap)})\le  \int_{u^{-1}(\widehat{Y}_\cap)} u^*\rd \alpha,$$
where the metric $g$ is defined by 
$$|X+a\partial_r+bR_{\Gamma}|^2=r\rd \alpha_{\Gamma}(X, J_{\Gamma}X)+Ca^2+Cb^2.$$

As $u$ is completely contained in $\widehat{Y}_V\cup \widehat{Y}_{\cap}$, the above discussion shows that  $\pi_V\circ u$ will have a sufficiently small area w.r.t.\ to a fixed metric on $V$ for $f-1$ sufficiently $C^2$-small. Then by \Cref{lemma:non-trivial}, we have $\pi_V\circ u$ must be trivial in rational homology if $f-1$ is sufficiently small, as $\int u^*\rd \alpha$ is sufficiently small. The cases for \eqref{eqn:S_1} and \eqref{eqn:phi-1} are similar, as $X_H$ is parallel to $R$
\end{proof}

\begin{remark}
    Results similar to \eqref{prop:trivial_topology} were obtained by compactness arguments in \cite[Proposition 3.1]{Z23}. The above argument provides an alternative proof without appealing to the Morse-Bott compactness in \cite{BODuke}. In the case of \eqref{eqn:LCH} using the almost complex structure $J^F$, the same conclusion can be drawn from the foliation in \Cref{prop:foliation_symp} similar to \Cref{cor:disk}.
\end{remark}

We use $\cM_J(\gamma_{p_+},\gamma_{p_-})$ to denote the moduli space in \eqref{eqn:LCH}. By \Cref{prop:trivial_topology}, we have $u\in \cM_J(\gamma_{p_+},\gamma_{p_-})$ have trivial homology in the $V$-direction. In particular, we have the virtual dimension of $\cM_J(\gamma_{p_+},\gamma_{p_-})$ is $\ind(p_+)-\ind(p_-)-1$.

\begin{proposition}\label{prop:curve}
    Suppose $f-1$ in \S \ref{s3} is sufficiently $C^2$ small. For any almost complex structure on $\widehat{V\times \disk}$, which on the positive end is a generic perturbation of those considered in \Cref{prop:trivial_topology}, we have $q_{\gamma_{\max}}$ is closed and gives a class in $\LCH^{<2\pi+\delta}_{2n-3-*}(Y_f,\epsilon_{V\times \disk})$. $\cM_J(\gamma_{p_{\max}},q)$ is a smooth oriented manifold of dimension $0$ and the signed count of $\cM_J(\gamma_{p_{\max}},q)$ is $1$ for generic $q \in V\times \disk$. $\cM_J(\gamma_{p_{\max}},\gamma_{p})$ is compact and cut out transversely whenever the virtual dimension is zero and $\#\cM_J(\gamma_{p_{\max}},\gamma_{p})=0$.
\end{proposition}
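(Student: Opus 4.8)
The plan is to prove the four bundled assertions in order: the index bookkeeping, transversality and the smooth oriented $0$-manifold claim, the signed count $1$ for $\cM_J(\gamma_{p_{\max}},q)$, and the emptiness of $\cM_J(\gamma_{p_{\max}},\gamma_p)$ at virtual dimension $0$, which in turn forces $q_{\gamma_{p_{\max}}}$ to be a cycle. First, since $p_{\max}$ is the unique local maximum of $f$ and $\dim V=2n$, its Morse index equals $2n$; by \Cref{prop:trivial_topology} (for the cylinders) and \Cref{cor:disk} (for the planes, using $J^F$) all the relevant curves are homologically trivial in the $V$-direction, so \cite[Theorem 6.3]{ADCI} gives $\virdim\cM_J(\gamma_{p_{\max}},q)=\ind(p_{\max})-\dim V=0$ and $\virdim\cM_J(\gamma_{p_{\max}},\gamma_p)=2n-\ind(p)-1$, which is $0$ only when $\ind(p)=2n-1$. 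Transversality is automatic for generic $J$ within the admissible class because $\gamma_{p_{\max}}$ is simple and all curves are somewhere injective, so the moduli spaces are smooth oriented manifolds of the expected dimension; in particular $\cM_J(\gamma_{p_{\max}},q)$ is a smooth oriented $0$-manifold.

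For compactness, $\widehat{V\times\disk}=\widehat V\times\C$ is exact, so no spheres or puncture-free planes bubble off; \Cref{prop:d} makes the $\disk$-component of any such curve a bounded holomorphic map, and plurisubharmonicity of $r$ (cf.\ \Cref{rmk:ana}) bounds the $\widehat V$-component, giving a priori $C^0$ bounds; and index additivity rules out every SFT breaking of $\cM_J(\gamma_{p_{\max}},q)$, since a broken configuration would be a cylinder in $\widehat Y_f$ over some $\gamma_{p'}$ glued to a plane, of total expected dimension $(2n-\ind(p')-1)+(\ind(p')-2n)=-1$. Hence $\cM_J(\gamma_{p_{\max}},q)$ (and likewise $\cM_J(\gamma_{p_{\max}},\gamma_p)$) is compact, so the signed count is a finite number, independent of the generic $q\in V\times\disk$ and of $J$ in the admissible class. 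To compute it I would take $J=J^F$: by \Cref{cor:disk} every plane in $\cM_{J^F}(\gamma_{p_{\max}})$ meets each leaf of \Cref{prop:foliation} once and transversally, hence after reparametrization is a holomorphic section of the $(\widehat V,J_V)$-fibration $\widehat{V\times\disk}\to\C$ asymptotic to $\gamma_{p_{\max}}$; since the fibre is exact, two such sections agreeing at a point coincide, so the evaluation map $ev\colon\cM_{J^F}(\gamma_{p_{\max}})\to\widehat{V\times\disk}$ has a single point over $q$ and, comparing orientations, the count is $+1$. (Equivalently, one localizes near $p_{\max}$, where $(V,\lambda,f)$ is the standard model and $\gamma_{p_{\max}}$ is the standard short orbit, and invokes the computation of \cite[\S 2]{Z23} or \cite[\S 6]{ADCI}.)

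For the emptiness of $\cM_J(\gamma_{p_{\max}},\gamma_p)$, take $u\in\cM_{J^F}(\gamma_{p_{\max}},\gamma_p)\subset\widehat Y_f$ and use the $L$-simple holomorphic foliation of $\widehat Y_f$ from \Cref{prop:foliation_symp}, with $\widehat V$-leaves over $\C\setminus\disk$ and $\widehat\Gamma$-leaves over $\disk$. A short linking computation gives $u\bullet F=0$ for the $\widehat V$-leaves $F$ that do not carry $\gamma_{p_{\max}}$ or $\gamma_p$ in their asymptotics (both asymptotics of $u$ wind the same way around the $\widehat V$-family), or $u\bullet G<0$ for a suitable $\widehat\Gamma$-leaf $G$ in the remaining case. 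By \Cref{thm:intersection_foliation} and \Cref{cor:leaf} this forces $u$ into a single leaf, contradicting that $u$ is a nonconstant cylinder whose positive asymptotic $\gamma_{p_{\max}}$ lies over the interior point $p_{\max}$ and hence is not the asymptotic of any leaf. So $\cM_{J^F}(\gamma_{p_{\max}},\gamma_p)=\emptyset$, and by genericity $\cM_J(\gamma_{p_{\max}},\gamma_p)$ is empty (vacuously transverse) for every admissible $J$; in particular $\#\cM_J(\gamma_{p_{\max}},\gamma_p)=0$. Finally, in the window $<2\pi+\delta$ the augmentation $\epsilon_{V\times\disk}$ contributes nothing (no action room, as in the proof of \Cref{thm:BO}), so $\partial_{\epsilon_{V\times\disk},1}q_{\gamma_{p_{\max}}}=\sum_p\#\cM_J(\gamma_{p_{\max}},\gamma_p)\,q_{\gamma_p}=0$ and $q_{\gamma_{p_{\max}}}$ defines a class in $\LCH^{<2\pi+\delta}_{2n-3-*}(Y_f,\epsilon_{V\times\disk})$.

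The hard part is the signed count $=1$, i.e.\ pinning the degree of $ev$ to $1$. The foliation of \Cref{prop:foliation} reduces it to counting holomorphic sections, but turning ``the fibre is exact, hence sections are rigid'' into a clean argument — handling the punctured asymptotics and the non-product nature of $J^F$ — is the delicate step, with the explicit local model of \cite{Z23,ADCI} as a fallback. The linking computation in the previous paragraph also requires care, but is routine once the foliation of \Cref{prop:foliation_symp} is in hand.
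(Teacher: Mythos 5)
Your proposal diverges from the paper's argument at the two central claims, and both of your replacements have genuine gaps. The most serious is the assertion that $\cM_{J^F}(\gamma_{p_{\max}},\gamma_p)=\emptyset$. This is false in general: for $f-1$ $C^2$-small these cylinders correspond to gradient trajectories of $f$ from $p_{\max}$ to index-$(2n-1)$ critical points, which exist whenever such critical points do (e.g.\ $V=\disk^2$ with one maximum and one saddle); only the \emph{signed count} vanishes. Your foliation argument cannot rescue this: for a cylinder $u$ from $\gamma_{p_{\max}}$ to $\gamma_p$ contained in $\widehat{Y}_V\cup\widehat{Y}_\cap$, the map $(s\circ u,\theta\circ u)$ has degree $1$, so $u$ meets a $\widehat{V}$-leaf $\{s_0\}\times V\times\{\theta_0\}$ with intersection number $+1$, not $\le 0$; the $\widehat{\Gamma}$-leaves are simply disjoint from $u$ (giving $0$, not $<0$); and in any case no leaf of \Cref{prop:foliation_symp} is asymptotic to any $\gamma_p$ (the orbits sit over interior points of $V$), so the hidden-intersection formalism of \Cref{thm:intersection_foliation} — which the paper explicitly only uses with this foliation for classical homological confinement, since the leaves lack $L$-simple asymptotics — gives no constraint. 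Consequently your derivation of closedness of $q_{\gamma_{p_{\max}}}$ also collapses, and even if emptiness held for one $J$ it would not transfer to all admissible $J$ "by genericity"; a cobordism/neck-stretching argument is needed, as in the paper.

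The second gap is the count $\#\cM_J(\gamma_{p_{\max}},q)=1$. Your "two sections through a point coincide" step is a curve–curve positivity-of-intersection argument for punctured curves, which the paper deliberately does not develop (only curve–hypersurface intersection is set up in \S\ref{s2}), and it would at best bound the number of unsigned points, not compute the signed count. The paper's route is entirely different and global: it computes $SH^{*,<\delta,<2\pi+\delta}_{+,\SS^1}(V\times\disk)\cong H^*(V\times\disk)[1]$ with a product Hamiltonian, transports this through the filtered Bourgeois–Oancea isomorphism (\Cref{thm:BO}) to conclude that $\tau:\LCH^{<2\pi+\delta}_{2n-3-*}(Y_f,\epsilon_{V\times\disk})\to H^{*+1}(V\times\disk)$ is surjective, and then uses \Cref{prop:trivial_topology} plus the uniqueness of the local maximum to see that the only generator that can hit $1\in H^0$ is $q_{\gamma_{p_{\max}}}$; this simultaneously forces $q_{\gamma_{p_{\max}}}$ to be closed, $\#\cM_J(\gamma_{p_{\max}},\gamma_p)=0$, and $\tau(q_{\gamma_{p_{\max}}})=\pm1$, i.e.\ $\#\cM_J(\gamma_{p_{\max}},q)=1$ after fixing orientations. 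Your index bookkeeping, transversality, and compactness discussion are fine and consistent with the paper, but without \Cref{thm:BO} (or an equally rigorous substitute for both counts) the proof does not go through.
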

\begin{proof}
We follow similar arguments in \cite{Z23}, but we need to keep track of homology classes of curves for the purpose in this paper. Similar to \cite[\S 2.4, 2.6]{Z23}, by consider a product Hamiltonian on $\widehat{V}\times \C$, we can compute $SH^{*,<\delta,<2\pi+\delta}_{+,\SS^1}(V\times \disk)\stackrel{\tau}{\simeq} H^*(V\times \disk)[1]$. Combining \cite[Proposition 2.9]{Z23} with \Cref{thm:BO}, we get the following diagram:
$$
\xymatrix{ 
SH^{*,<\delta,<2\pi+\delta}_{+,S^1}(V\times \disk) \ar[r] \ar[rrd]^{\simeq} & SH^{*,<2\pi+3\delta}_{+,\SS^1}(V\times \disk) \ar[r]^{\simeq} \ar[rd] & \LCH^{<2\pi+3\delta}_{2n-3-*}(Y,\epsilon_{V\times \disk})\ar[d]\\
& & H^{*+1}(V\times \disk)
} $$
That is $\LCH^{<2\pi+3\delta}_{2n-3-*}(Y_f,\epsilon_{V\times \disk})= \LCH^{<2\pi+\delta}_{2n-3-*}(Y_f,\epsilon_{V\times \disk}) \stackrel{\tau}{\to} (Y,\epsilon_{V\times \disk})$ is surjective. Now if we use the almost complex structure $J^F$ in \S \ref{ss:foliation} to set up $\LCH^{<2\pi+\delta}_{2n-3-*}(Y_f,\epsilon_{V\times \disk})$, we know that any curve $u$ contributing to $\tau$ must have trivial homology in the $V$-direction. Combined with \Cref{prop:trivial_topology}, we know the broken curves in the compactification also have this property. Even though the almost complex structure $J^F$ in \S \ref{ss:foliation} can not guarantee transversality, the above discussion implies that a small perturbation to $J^F$ to achieve transversality will maintain the property that curves contributing to $\delta$ must have trivial homology in the $V$-direction. Then we can get their virtual dimension using the Morse index of $p$, in particular, the only possibility to contribute to $1\in H^*(V\times \disk)$ is from $\gamma_{p_{\max}}$. As there is only one local maximum and the differential on $\LCH$ respective the Morse index by \Cref{prop:trivial_topology}, we know that $\gamma_{p_{\max}}$ is closed and is mapped to $\pm 1\in H^*(V\times \disk)$. For simplicity, we fix orientation for the orientation line associated with $\gamma_{p_{\max}}$, such that $\tau(q_{\gamma_{p_{\max}}})=1$. As the $1$ is represented by the stable manifold of a unique local minimum point $q$ of an axillary Morse function on $V\times \disk$, $\tau(q_{\gamma_{p_{\max}}})=1$ is translated to $\#\cM_J(\gamma_{p_{\max}},q)=1$. 

So far, we have established the claim for $J$ close to $J^F$ in \S \ref{ss:foliation}. To prove the claim for a compatible $J$, for example compatible $J$ considered in \Cref{prop:trivial_topology}, using the interpolation in \Cref{prop:interpolation}, we can build an almost complex structure on $\widehat{V\times \disk}$ which is compatible on the positive end and is $J^F$ on a neighborhood of $V\times \disk$, such that if we apply neck-stretching along $Y_f$, we get the trivial cobordism described after \Cref{prop:interpolation}, $(\widehat{Y}_f,J^F)$ and $(\widehat{V\times \disk},J^F)$. \Cref{prop:trivial_topology} applies to this trivial cobordism as well for $f-1$ sufficiently small by the same proof. Therefore, for a sufficiently stretched almost complex structure, we must have the curve contributing to $1\in H^*(V\times \disk)$ has trivial homology in $V$. Now the almost complex structure, denoted by $J_0$,  near the positive end is compatible. Then we can consider the family of moduli spaces $\cM_{J_t}(\gamma_{p_{\max}},q)$ and $\cM_{J_t}(\gamma_{p_{\max}},\gamma_{p})$ for a family of almost complex structures connecting $J_0$, which is close to $J_F$ in the bounded domain, to another almost complex structure $J_1$, such that $J_0=J_1$ in the positive end. The boundaries involve the moduli spaces for $t=0,1$, and those involves $\cup_t \cM_{J_{t}}(\gamma_{p_{\max}},\gamma_p)$ with virtual dimension $0$. As \Cref{prop:trivial_topology} holds for this family  $J_t$ is the same on the symplectization, the virtual dimension of $\cup_t \cM_{J_{t}}(\gamma_{p_{\max}},\gamma_p)$ is at least $1$, since $p_{\max}$ is unique local maximum. Therefore, the claim follows.
\end{proof} 

\begin{proposition}\label{prop:curve2}
Let $\nu$ be an embedded loop in $V\times \disk$. For any almost complex structure on $\widehat{V\times \disk}$, which on the positive end is a generic perturbation of those considered in \Cref{prop:trivial_topology}, the compactification $\overline{\cM}_J(\gamma_{p_{\max}},\nu)$  of $\cM_J(\gamma_{p_{\max}},\nu)$ is a smooth manifold with boundary of dimension $1$, 
One can pair up the boundary points of $\overline{\cM}_J(\gamma_{p_{\max}},\nu)$, such that $\mathrm{ev}(u)=u(0)$ is well-defined on the quotient $\overline{\cM}_J(\gamma_{p_{\max}},\nu)/\sim$ and $\mathrm{ev}$ is of degree one to $\nu$.  
\end{proposition}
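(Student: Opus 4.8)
The plan is to exhibit $\cM_J(\gamma_{p_{\max}},\nu)$ as a transversely cut out $1$-manifold and then read off the boundary of its SFT compactification. For the dimension count: since $p_{\max}$ is the unique local maximum of $f$ we have $\ind(p_{\max})=\dim V$, and by \Cref{prop:trivial_topology} every curve in sight satisfies $[\pi_V\circ u]=0$, so by \cite[Theorem 6.3]{ADCI} the virtual dimension is governed by the Morse index alone, giving $\virdim\cM_J(\gamma_{p_{\max}},q)=\ind(p_{\max})-\dim V=0$. Replacing the point constraint $u(0)=q$ by $u(0)\in\nu$ adds $\dim\nu=1$, so $\virdim\cM_J(\gamma_{p_{\max}},\nu)=1$. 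For $J$ as in the statement the moduli spaces $\cM_J(\gamma_{p_{\max}},\gamma_p)$ are transverse with signed count $0$ by \Cref{prop:curve}; the same somewhere-injectivity argument (all $\gamma_p$ are simple) makes $\cM_J(\gamma_p,\nu)$ and $\cM_J(\gamma_{p_{\max}},\nu)$ transverse as well, possibly after a small isotopy of $\nu$ arranging $\mathrm{ev}\pitchfork\nu$ (which does not affect the conclusion). Hence $\cM_J(\gamma_{p_{\max}},\nu)$ is a smooth oriented $1$-manifold and $\mathrm{ev}(u)=u(0)$ is a smooth map to $\nu$.

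Next I would identify the SFT compactification. Sphere bubbling cannot occur since $\widehat{V\times\disk}=\widehat{V}\times\C$ is exact; by \Cref{prop:trivial_topology} no curve enters $\widehat{Y}_{\disk}$; and any Reeb orbit arising in a breaking of a curve asymptotic to $\gamma_{p_{\max}}$ has period at most that of $\gamma_{p_{\max}}$, hence below $4\pi$, so it is of type $\gamma_{p'}$ (\S\ref{ss:L-simple-contact}). Thus the only codimension-one degeneration of an element of $\cM_J(\gamma_{p_{\max}},\nu)$ is a two-level building: a cylinder $v\in\cM_J(\gamma_{p_{\max}},\gamma_{p'})$ in the symplectization $\R\times Y_f$ glued to a plane $w\in\cM_J(\gamma_{p'},\nu)$ carrying the point constraint. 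The index identity $\dim\cM_J(\gamma_{p_{\max}},\gamma_{p'})+\dim\cM_J(\gamma_{p'},\nu)=(\ind(p_{\max})-\ind(p')-1)+(\ind(p')-\dim V+1)=0$, together with transversality, forces both levels to be rigid, so $\ind(p')=\dim V-1$; deeper buildings have negative virtual dimension and do not occur. Standard SFT gluing then shows that $\overline{\cM}_J(\gamma_{p_{\max}},\nu)$ is a compact $1$-manifold whose boundary is exactly this finite set of pairs $(v,w)$, each with a half-collar of glued curves, and that $\mathrm{ev}$ extends continuously by $\mathrm{ev}(v,w)=w(0)$.

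To conclude I would pair up the boundary points using the vanishing of the counts. For each $p'$ with $\ind(p')=\dim V-1$, the $0$-dimensional moduli space $\cM_J(\gamma_{p_{\max}},\gamma_{p'})$ is compact, transverse, and of signed count $0$ by \Cref{prop:curve}, so its points split into pairs $\{v_+,v_-\}$ of opposite sign; for each such pair and each $w\in\cM_J(\gamma_{p'},\nu)$ we declare $(v_+,w)\sim(v_-,w)$. Since $\mathrm{ev}(v_+,w)=\mathrm{ev}(v_-,w)=w(0)$, the map $\mathrm{ev}$ descends to $\overline{\cM}_J(\gamma_{p_{\max}},\nu)/\!\sim$, and joining the two half-collars at each identified pair turns this quotient into a closed oriented $1$-manifold. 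Finally the degree of $\mathrm{ev}$ on it equals $1$: pick $q_0\in\nu$ generic, so that it is a regular value of $\mathrm{ev}$ distinct from the finitely many boundary values $w(0)$; then $\mathrm{ev}^{-1}(q_0)=\cM_J(\gamma_{p_{\max}},q_0)$, whose signed count is $1$ by \Cref{prop:curve}.

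The main obstacle I expect is the second step: pinning down the codimension-one boundary of $\overline{\cM}_J(\gamma_{p_{\max}},\nu)$ --- this is where exactness (no bubbling), \Cref{prop:trivial_topology} (curves stay out of $\widehat{Y}_{\disk}$ and have controlled homology, hence controlled virtual dimension) and the period bound (only $\gamma_p$-type asymptotics) all get used --- and then the gluing plus sign-matching needed to make $\overline{\cM}_J(\gamma_{p_{\max}},\nu)/\!\sim$ a genuine smooth closed $1$-manifold on which $\mathrm{ev}$ is well defined. Granting that, the degree statement follows at once from \Cref{prop:curve}.
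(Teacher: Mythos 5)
Your proposal is correct and follows essentially the same route as the paper: transversality from somewhere-injectivity of the simple orbits, identification of the codimension-one boundary with pairs of rigid cylinders in $\cM_J(\gamma_{p_{\max}},\gamma_{p'})$ glued to rigid constrained planes in $\cM_J(\gamma_{p'},\nu)$, pairing of boundary points via the vanishing signed count $\#\cM_J(\gamma_{p_{\max}},\gamma_{p'})=0$ from \Cref{prop:curve}, and the degree computation at a regular value $q_0\in\nu$ avoiding the finitely many boundary evaluations, where $\mathrm{ev}^{-1}(q_0)=\cM_J(\gamma_{p_{\max}},q_0)$ has count $1$. The only cosmetic difference is that you perturb $\nu$ to achieve transversality of the evaluation map while the paper perturbs $J$; both are standard and equivalent here.
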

\begin{proof}
We choose a generic $J$ (using generic perturbations at the positive end near Reeb orbits is sufficient), such that $\cM_J(\gamma_p,\nu)$ and $\cM_J(\gamma_p,\gamma_{p'})$ are cut out transversely for all critical points $p,p'$. Then the compactification of $\overline{\cM}_J(\gamma_{p_{\max}},\nu)$ is $\cM_J (\gamma_{p_{\max}},\nu) \bigcup \cM_J(\gamma_{p_{\max}},\gamma_p)\times \cM_J(\gamma_p,\nu)$, such that $\dim \cM_J(\gamma_{p_{\max}},\gamma_p)=0$ and $\dim  \cM_J(\gamma_p,\nu)=0$. Those $\cM_J(\gamma_{p_{\max}},\gamma_p)$ and $\cM_J(\gamma_p,\nu)$ with expected dimension $0$ are compact. By \Cref{prop:curve}, we know that $\#\cM_J(\gamma_{p_{\max}},\gamma_p)=0$. In particular, there is a (non-canonical) pairing in $\cM_J(\gamma_{p_{\max}},\gamma_p)$ by points with opposite orientations. As a consequence, the quotient $\overline{\cM}_J(\gamma_{p_{\max}},\nu)/\sim$ is a compact manifold without boundary, and the evaluation map to $\nu$ is well-defined and continuous. For a fixed point $q$ on the image of $\nu$, we may assume the almost complex structure $J$ is generic such that $\cM_J(\gamma_{p_{\max}},q)$ is the compactification and regular as in \Cref{prop:curve}. As a consequence, $q$ does not meet $\cM_J(\gamma_p,\nu)$ by the evaluation map for those $\cM_J(\gamma_{p_{\max}},\gamma_p)\ne \emptyset$, for otherwise, $\cM_J(\gamma_{p_{\max}},q)$ is proper subset of its compactification. As a consequence, $\mathrm{ev}^{-1}(q)|_{\overline{\cM}_J(\gamma_{p_{\max}},\nu)/\sim}=\mathrm{ev}^{-1}(q)|_{\cM_J(\gamma_{p_{\max}},\nu)}\simeq \cM_J(\gamma_{p_{\max}},q)$ and transversality for  $\cM_J(\gamma_{p_{\max}},q)$ implies that $q$ is a regular value of $\mathrm{ev}$. Therefore, by \Cref{prop:curve}, we know that the degree of $\mathrm{ev}$ is $1$.
\end{proof}

Note that for symplectic filling $W$ of $H\subset Y_f$, by requiring $\partial U\subset \partial V$ is a contact submanifold such that the Reeb vector on $\partial V$ restricts to Reeb vector field on $\partial U$, there exist compatible almost complex structures on $\widehat{V\times \disk}$ such that $\widehat{W}$ is such that \Cref{prop:curve,prop:curve2} hold for $p$ and $\nu$ do not intersect $\widehat{W}$, as we can perturb $J$ near $p$ and $\nu$, which are outside of $\widehat{W}$. In the following section, we will suppress the requirement on almost complex structures for simplicity. 
\section{Homotopy type of the complement}\label{s5}
We now proceed to prove \Cref{thm:V}.
\begin{proposition}\label{prop:homology}
    Under the assumptions of \Cref{thm:V}, let $W$ be a symplectic filling of $H=\partial(U\times \disk)$ in $V\times \disk$, then $V\times \{(0,1)\}\backslash U\times \{(0,1)\}\to V\times \disk \backslash W$ induces an isomorphism on homology.
\end{proposition}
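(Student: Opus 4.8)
The plan is to probe the complement with the moduli space $\cM_J(\gamma_{p_{\max}})$ of $J$-holomorphic planes asymptotic to $\gamma_{p_{\max}}$ from \S\ref{s4} and to show it recovers the homotopy type of $V\setminus U$. First I would pass to completions: since $(0,1)\in\partial\disk$, the slice $V\times\{(0,1)\}$ is a page of the trivial open book $\partial(V\times\disk)$, and $\bigl(V\times\{(0,1)\}\bigr)\setminus\bigl(U\times\{(0,1)\}\bigr)$ deformation retracts, after completing, onto a copy of $\widehat V\setminus\widehat U\simeq V\setminus U$; likewise $(V\times\disk)\setminus W$ deformation retracts onto $\widehat{V\times\disk}\setminus\widehat W=(\widehat V\times\C)\setminus\widehat W$, and the inclusion becomes the inclusion of this ``page leaf'' $\widehat V\setminus\widehat U$ into $(\widehat V\times\C)\setminus\widehat W$. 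It suffices to show this map is a homotopy equivalence. Throughout, $J$ is the $L$-simple almost complex structure of \S\ref{s3} making $\widehat W$ holomorphic, generically perturbed near $\gamma_{p_{\max}}$ as in \Cref{prop:curve}, and arranged so that the holomorphic foliation of \Cref{prop:foliation} by leaves biholomorphic to $(\widehat V,J_V)$ persists.

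The geometric heart is the structure of $\cM_J(\gamma_{p_{\max}})$. By \Cref{prop:curve} a generic point of $\widehat{V\times\disk}$ lies on these planes with signed count $1$; by \Cref{thm:intersection} applied to two such planes (both homologous to the $\C$-fiber and both carrying the extremal-winding asymptotics at $\gamma_{p_{\max}}$ recorded in \Cref{ex:eigen}) their intersection number is $0$, and likewise the self-intersection number of each is $0$. Hence distinct planes are disjoint and each plane is embedded, so the evaluation map $\mathrm{ev}$ from the universal curve $\widetilde{\cM}$ to $\widehat{V\times\disk}$ is a bijection; with properness supplied by $\#\cM_J(\gamma_{p_{\max}},\gamma_p)=0$ from \Cref{prop:curve} together with the homology control of \Cref{prop:trivial_topology} (which forbids breaking into lower-index planes and cylinders) and the Fredholm count $\dim\widetilde{\cM}=\dim\widehat{V\times\disk}$, the map $\mathrm{ev}$ is a diffeomorphism. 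By \Cref{cor:disk} every plane meets each leaf of the foliation in exactly one point, so fixing one leaf $L$ identifies $\cM_J(\gamma_{p_{\max}})$ with $L\cong\widehat V$ via $u\mapsto u\cap L$. Finally, by \Cref{prop:intersection} and \Cref{cor:leaf} each plane is either contained in $\widehat W$ or disjoint from it, and a plane lies in $\widehat W$ precisely when its intersection point with $L$ lies in $\widehat W\cap L$; so under the above identification the locus of planes inside $\widehat W$ corresponds to $\widehat W\cap L\subset\widehat V$.

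Assembling these: $\mathrm{ev}$ restricts to a diffeomorphism from the universal curve over the open submoduli space $\cM_J^{\mathrm{off}}$ of planes disjoint from $\widehat W$ onto $(\widehat V\times\C)\setminus\widehat W$, and this universal curve is a $\C$-bundle over $\cM_J^{\mathrm{off}}\cong\widehat V\setminus(\widehat W\cap L)$; therefore $(\widehat V\times\C)\setminus\widehat W\simeq\widehat V\setminus(\widehat W\cap L)$. Since both $\widehat W$ and the foliation are standard near $\partial(V\times\disk)$, choosing $L$ far out in the $\C$-valued leaf space makes $\widehat W\cap L$ the standard $\widehat U\subset\widehat V\cong L$; hence $(\widehat V\times\C)\setminus\widehat W\simeq\widehat V\setminus\widehat U\simeq V\setminus U$, and tracking the page leaf through the construction shows this equivalence inverts the inclusion, which yields the proposition (indeed a homotopy equivalence, a bit more than is claimed).

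The main obstacle is the analytic bookkeeping around $\mathrm{ev}$: one must achieve transversality of all the moduli spaces in play for a single $J$ that is simultaneously $L$-simple near $\gamma_{p_{\max}}$, makes $\widehat W$ holomorphic, and is compatible with the foliation of \Cref{prop:foliation}, and one must exclude SFT breaking in the compactness argument for $\mathrm{ev}$ using only \Cref{prop:curve} and \Cref{prop:trivial_topology}. A secondary point is the precise identification of $\widehat W\cap L$ with the standard $\widehat U$ for $L$ far out, and the verification that the resulting homotopy equivalence is inverse to the inclusion. For the homology statement alone one can economize: degree-one-ness of $\mathrm{ev}$ already makes $H_*(\cM_J^{\mathrm{off}})\to H_*\bigl((\widehat V\times\C)\setminus\widehat W\bigr)$ surjective, and running the planes in the opposite direction — in the spirit of the homotopy inverse $\Psi$ in the proof of \Cref{thm:BO}, or by applying the degree-one evaluation of \Cref{prop:curve2} to cycles rather than to loops — supplies the matching injectivity.
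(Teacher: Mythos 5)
Your proposal takes a genuinely different route from the paper, and the route has a fatal gap. The central step of your argument --- that two planes in $\cM_J(\gamma_{p_{\max}})$ are disjoint and each is embedded because their (self-)intersection numbers vanish, so that the universal curve foliates $\widehat{V\times\disk}$ and $\mathrm{ev}$ is a diffeomorphism --- is a strictly four-dimensional technique. Here the ambient manifold has dimension $2n+2\ge 6$, where two holomorphic curves are of complementary dimension only when $n=1$: for $n\ge 2$ there is no curve--curve intersection pairing of the required kind, positivity of intersections between two curves fails (generically they are simply disjoint, and actual intersections are not stable), and ``self-intersection $0$ implies embedded'' has no adjunction-type justification. \Cref{thm:intersection} of the paper is an intersection formula between a punctured \emph{curve} and a codimension-$2$ holomorphic \emph{hypersurface}; it cannot be applied to a pair of planes as you do. A secondary gap: $\#\cM_J(\gamma_{p_{\max}},\gamma_p)=0$ does not give properness of $\mathrm{ev}$ --- a vanishing signed count still permits breaking, which is exactly why \Cref{prop:curve2} has to pair up boundary points of the compactification rather than assert properness.

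The paper's actual proof is far more economical and uses no holomorphic curves in the complement at all. Since $W$ is itself a symplectic filling of $\partial(U\times\disk)$, the absolute rigidity theorem \cite[Theorem 1.1]{Z23} (plus universal coefficients) gives that $U\times\{(0,1)\}\hookrightarrow W$ is a homology isomorphism. The long exact sequence of pairs and the five lemma then give $H_*(V\times\{(0,1)\},U\times\{(0,1)\})\cong H_*(V\times\disk,W)$; the Gysin sequence transfers the isomorphism to the normal circle bundles $P\subset S$; and excision (tubular neighborhood) together with one more application of the five lemma yields the statement about complements. Holomorphic curves probing the complement enter only later, for the fundamental group statement in \Cref{prop:homotopy}. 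If you want to salvage your approach for the homology statement alone, the degree-one evaluation arguments of \Cref{prop:curve2} and \Cref{prop:pseudocycle}-style pseudocycle bookkeeping could plausibly give surjectivity on $H_*$ of the complement, but injectivity would still require the classical duality argument above, so nothing is gained.
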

\begin{proof}
    By \cite[Theorem 1.1]{Z23} and universal coefficient theorem, we have $U\times \{(0,1)\}\to W$ induces an isomorphism on homology. Then by the long exact sequence of pairs and the five lemma, we have $H_*(V\times \{(0,1)\}, U\times \{(0,1)\})\to H_*(V\times \disk, W)$ is an isomorphism. Let $S$ be the circle bundle over $W$ from the normal bundle of $W$ in $V\times \disk$ and $P$ be the restriction of $S$ to $U\times \{(0,1)\}$. By the Gysin exact sequence, $P\subset S$ also induces an isomorphism on homology. By excision, we have a long exact sequence
    $$\ldots \to H_*(P)\to H_*(V\times \{(0,1)\}\backslash U\times \{(0,1)\}) \to H_*(V\times \{(0,1)\}, U\times \{(0,1)\}) \to  \ldots $$
    and similarly for $H_*(V\times \disk, W)$. Therefore the five lemma implies that $V\times \{(0,1)\}\backslash U\times \{(0,1)\}\to V\times \disk \backslash W$ induces an isomorphism on homology.
\end{proof}

\begin{remark}
    When $V=\disk^n$ and $U=\disk^{n-1}$. By Eliashberg-Floer-McDuff theorem \cite{McDuff91}, $W$ is diffeomorphic to a $2n$-dimensional ball. The one pointed compactification of the interior of $(\disk^{n+1},W)$ is a pair $(\overline{\disk^{n+1}}=\Sphere^{2n+2},\overline{W}=\Sphere^{2n})$. Then by Alexander duality, we have 
	$$H_*(\disk^{n+1}\backslash W)=H_*(\Sphere^{2n+2}\backslash \overline{W})=H_*(\Sphere^1).$$
	The first homology is generated by loops around $\overline{W}$, and an isomorphism to $\Z$ can be represented by the linking number. Therefore $\Sphere^{2n+1}\backslash \Sphere^{2n-1}\to \disk^{n+1}\backslash W$ induces an isomorphism on homology.
\end{remark}

\begin{proposition}\label{prop:homotopy}
    Under the assumptions of \Cref{thm:V}, let $W$ be a symplectic filling of $H=\partial(U\times \disk)$ in $V\times \disk$, then $V\times \{(0,1)\}\backslash U\times \{(0,1)\}\to V\times \disk \backslash W$ induces a surjective map on the fundamental group.
\end{proposition}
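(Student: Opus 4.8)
The plan is to use the one-parameter family of $W$-disjoint holomorphic disks furnished by \Cref{prop:curve2} to homotope an arbitrary loop in $V\times\disk\setminus W$, staying inside the complement of $W$, onto an arbitrarily small neighborhood of the Reeb orbit $\gamma_{p_{\max}}$ in $\partial(V\times\disk)$, and then to read off the local fundamental group there from the $L$-simple model of \Cref{ss:L-simple-contact}. For the \emph{normalization}: since $W$ has codimension $2$ and $\dim(V\times\disk)\ge 4$, any element of $\pi_1(V\times\disk\setminus W)$ is represented by a smooth embedded loop $\nu$ disjoint from $W$, indeed from a collar of $W$. As in \Cref{s4} and the remark after \Cref{prop:curve2}, fix $J$ that is compatible, makes $\widehat W$ a holomorphic hypersurface, and is generic near the orbits $\gamma_p$ and near $\nu$, so that \Cref{prop:curve,prop:curve2,prop:intersection} all apply. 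By \Cref{prop:intersection} and \Cref{cor:leaf}, every curve in $\cM_J(\gamma_{p_{\max}},\nu)$ has image disjoint from $\widehat W$ (its marked point lies on $\nu\not\subset W$, so the curve is not contained in $\widehat W$), and the same holds for the plane components of the broken configurations $\cM_J(\gamma_{p_{\max}},\gamma_p)\times\cM_J(\gamma_p,\nu)$ entering the compactification.

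For the \emph{homotopy}: by \Cref{prop:curve2}, $\overline{\cM}_J(\gamma_{p_{\max}},\nu)/\sim$ is a closed $1$-manifold $N$ with $\mathrm{ev}\colon N\to\nu$ of degree one, so, after a small perturbation putting $N$ in general position over $\nu$, I may regard the curves lying over $\nu$ as an (essentially single-valued) continuous family $\theta\mapsto u_\theta$ with $u_\theta(0)=\nu(\theta)$. Post-composing with the deformation retraction $\widehat{V\times\disk}\to V\times\disk$ that collapses the cylindrical end — which carries $\widehat W$ onto $W$ and $\widehat{V\times\disk}\setminus\widehat W$ onto $V\times\disk\setminus W$ — and compactifying the positive puncture by $\gamma_{p_{\max}}$ produces maps $\bar u_\theta\colon(\disk,\partial\disk)\to(V\times\disk,\gamma_{p_{\max}})$ whose interiors are disjoint from $W$ outside the finitely many ``node circles'' $\gamma_p\subset H$ of the broken members of the family (any stray isolated interior intersection points being dodgeable). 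Sweeping the point $\nu(\theta)=\bar u_\theta(0)$ out toward $\partial\disk$ along a continuously varying family of arcs then gives a homotopy from $\nu$ to a loop $\nu'$ lying in an arbitrarily small neighborhood of $\gamma_{p_{\max}}$, carried out inside $V\times\disk\setminus W$ except for finitely many transverse crossings of $W$ along the node circles; each such crossing changes the homotopy class only by a meridian of $W$. Hence $[\nu]$ equals a product of (conjugates of) meridians of $W$ times the class of $\nu'$.

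For the \emph{local model and conclusion}: in the $L$-simple chart around $\gamma_{p_{\max}}$, with $p_{\max}$ the maximum of $f$ (which lies on $U$), the submanifold $H$ is the coordinate hyperplane $\{z_n=0\}$ and $W$ near $\gamma_{p_{\max}}$ is a collar of $H$, so a deleted neighborhood of $\gamma_{p_{\max}}$ in $V\times\disk\setminus W$ is homotopy equivalent to $(\disk^2\setminus\{0\})\times S^1$; thus its $\pi_1$ is generated by a meridian of $W$ and the fiber circle $\gamma_{p_{\max}}$. The meridian of $W$ is, in this chart, the same small loop in the $z_n$-plane as a meridian of $U$ in $V$, hence lies in the image of $\pi_1(V\setminus U)=\pi_1(V\times\{(0,1)\}\setminus U\times\{(0,1)\})$. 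The fiber circle $\gamma_{p_{\max}}$, pushed slightly off $H$ in the $z_n$-direction (hence off $W$), can be slid — inside $Y_V\setminus H=(V\setminus U)\times S^1$, along a path in $V\setminus U$, thereby picking up an element of the image of $\pi_1(V\setminus U)$ — to the fiber circle over a point $v\in\partial V\setminus\partial U$, which bounds $\{v\}\times\disk$, a disk disjoint from $W$ because $W\cap\partial(V\times\disk)=H$ and $H\cap(\{v\}\times\disk)=\emptyset$. Combining these three facts, $[\nu]$ lies in the image of $\pi_1(V\setminus U)$, which proves the asserted surjectivity.

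The main obstacle is the middle step: upgrading the moduli-theoretic family of \Cref{prop:curve2} to an honest homotopy inside the complement of $W$. One must control the passage through the broken configurations — the sweep necessarily crosses $W$ along the node circles $\gamma_p$, so one needs that these contribute only meridians, which is exactly where \Cref{prop:curve} (the vanishing $\#\cM_J(\gamma_{p_{\max}},\gamma_p)=0$) and the SFT gluing picture are used — and one must keep track of base points so that the meridian factors and the sliding paths genuinely lie in the (a posteriori normal) image subgroup. The cleanest route is probably to first use the degree-one evaluation map from the one-marked-point moduli space to conclude that $\pi_1(V\times\disk\setminus W)$ is generated by loops near $\gamma_{p_{\max}}$ together with meridians of $W$, and only then run the local computation above.
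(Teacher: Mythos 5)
Your overall strategy --- take the degree-one family of planes through $\gamma_{p_{\max}}$ with marked point on $\nu$, observe via \Cref{prop:intersection} and \Cref{cor:leaf} that these planes avoid $\widehat W$, and sweep the marked point toward infinity --- is the paper's strategy, and your normalization and intersection-theoretic input are correct. But the execution of the sweep has a genuine gap. Your sweep runs over the compactified nodal disks and therefore traverses the cylinder components $\cM_J(\gamma_{p_{\max}},\gamma_p)$ of the broken configurations. These carry no marked point on $\nu$, so the disjointness argument does not apply to them: such a cylinder may intersect $\widehat H$, or even be contained in $\widehat H\subset\widehat W$ when $p\in U$. Hence your crossings with $W$ are not confined to the ``node circles'' and need not be isolated transverse meridional crossings; the key step of your argument is not justified. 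The paper never traverses the cylinder components: it parametrizes each plane by the asymptotic marker and an energy normalization, notes that at a glued point the two one-sided $C^\infty_{loc}$ limits are the \emph{same plane} of $\cM_J(\gamma_p,\nu)$ up to a reparametrization $\phi_i\in\Aut(\C,0)$, and bridges the jump with the arcs $u_i(R_i)(p_{i,s})$, where $|p_{i,s}|\to\infty$. The resulting loops $\nu_s$ lie in $\widehat{V\times\disk}\setminus\widehat W$ for every $s$ and, for $s\gg0$, in $\R_+\times(Y\setminus H)$, so there are no crossings to account for at all.

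Second, even granting your crossing count, writing $[\nu]$ as a product of conjugates of meridians times $[\nu']$ only places $[\nu]$ in the \emph{normal closure} of the image of $\pi_1(V\setminus U)$, because the conjugating arcs are arbitrary paths in the complement. You flag this yourself (``a posteriori normal''), but that is circular: normality of the image is only known after surjectivity is proved, which is what you are trying to establish. The paper avoids this entirely because its homotopy stays in the complement with basepoint moving along the fixed path $u_0(s)$, and its endgame is the van Kampen identification $\pi_1(V\times\{(0,1)\}\setminus U\times\{(0,1)\})\cong\pi_1(Y\setminus H)$ rather than your local model at $\gamma_{p_{\max}}$. (Your computation of that local model --- the deleted neighborhood being a torus generated by a meridian of $W$ and the fiber circle, the meridian being a meridian of $U$, and the fiber circle bounding $\{v\}\times\disk$ after sliding to $\partial V\setminus\partial U$ --- is essentially correct, but it becomes unnecessary once the loops are known to land in $\R_+\times(Y\setminus H)$.)
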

\begin{proof}
  By van-Kampen theorem, $V\times \{(0,1)\}\backslash U\times \{(0,1)\} \to Y\backslash H$ induces an isomorphism on the fundamental group. For a fixed point $q$ in $V\times \disk$ outside $W$, we assume $\cM_J(\gamma_{p_{\max}},q)$ is regular. Take $[u_0]\in \cM_J(\gamma_{p_{\max}},q)$ and $\nu$ is an embedded loop in $V\times \disk \backslash W$ based at $q$. Let $\cM\subset \overline{\cM}_J(\gamma_{p_{\max}},\nu)/\sim$ be the component containing $[u_0]$ and the evaluation map to $\nu$ is degree $1$ by \Cref{prop:curve2}. By \Cref{prop:intersection}, curves in $\cM$ do not intersect $\widehat{W}$ as they meet points outside of $\widehat{W}$. We fix a parametrization of the simple Reeb orbit $\gamma_{p_{\max}}$. For $[u]\in \cM\cap \cM_{J}(\gamma_{p_{\max}},\nu)$, we can fix a holomorphic map $u:\C\to \widehat{V\times \disk}$ such that $\lim_{x\to \infty} u(x+\mathbf{i}0)=\gamma_{p_{\max}}(0)$ and $\int_{\disk} |\rd u|^2=1$, The first condition fixes the $\Sphere^1$-factor of $\Aut(\C,0)$. The second condition fixes the $\R$-factor of $\Aut(\C,0)$, as $\int_{\disk(r)} |\rd u|^2$ is a strictly increasing function w.r.t.\ the radius $r$, for otherwise, we get a contradiction with the unique continuation of holomorphic curves. When such $u\in \cM\cap \cM_{J}(\gamma_{p_{\max}},\nu)$ converging to a glued point in $\cM$, the $C^{\infty}_{loc}$ limit of them gives rise to a curve (as maps modulo reparameterization) in $\cM_J(\gamma_p,\nu)$ as we require that $u(0)\in \ima \nu$. However, as parametrized maps, the $C^{\infty}_{loc}$ limits from two sides of a glued point may be different. Therefore we have $k$ intervals $\{I_i=(L_i,R_i)\}$ corresponding to the components of $\cM\cap \cM_{J}(\gamma_{p_{\max}},\nu)$ and each $I_i$ parametrizes a continuous family of maps $u_i(t)$ from $\C$ to $\widehat{V\times \disk}$ in the $C^{\infty}_{loc}$ topology, such that the limit curve $[u_{i}(R_i)]=[u_{i+1}(L_{i+1})]$ as unparameterized curves where $u_{k+1}(L_{k+1})=u_1(L_1)$.  We can write $u_{i}(R_i)=u_{i+1}(L_{i+1})\circ \phi_i$ for $\phi_i\in \Aut(\C,0)$. We define $\nu_s$ by concatenating alternatively $u_i(t)(s)$ with $u_i(R_i)(p_{i,s})$, where $p_{i,s}$ is a path in $\C$ connecting $s$ and $\phi^{-1}_i(s)$ depending continuously on $s$, such that $\lim_{s\to \infty } |p_{i,s}|=\infty$. We can have such a path as $|\phi_i^{-1}(s)|\to \infty$ when $s\to \infty$ for $\phi\in \Aut(\C,0)$. Then $\nu_0$ is a loop homotopic to $\nu$ based at $q$, and $\nu_s$ is a homotopy in $\widehat{V\times \disk}\backslash \widehat{W}$, such that for $s\gg 0$, $\nu_{s}$ is contained in $ \R_+\times (Y\backslash H)\subset \widehat{V\times \disk}\backslash \widehat{W}$. The base point moves on the fixed path $u_0(s)$. This shows that $ \R_+\times (Y\backslash H)\subset \widehat{V\times \disk}\backslash \widehat{W}$ induces a surjective map of the fundamental group. 
\end{proof}

\begin{remark}\label{rmk:general}
    \Cref{prop:homotopy} is local in the following sense. If we apply subcritical/flexible surgery to $\partial(V\times \disk)\backslash \partial(U\times \disk)$, all the ingredients in the proof should persist. \Cref{prop:intersection} clearly holds. That $\gamma_{p_{\max}}$ presents a closed class in linearized contact homology and is mapped to $1\in H^0(V\times \disk)$ should follow from the surgery formulae in \cite{BourEkEl} with exceptions built from one case: if we apply connect sum with the standard sphere $(\Sphere^{2n+1},\xi_{std})$, i.e.\ applying a combination of a $0$-surgery and a $1$-surgery to the complement. In this case, $\gamma_{p_{\max}}$ is not closed on the chain level w.r.t.\ the contact form following \cite{BourEkEl,Laz20,Yau} after surgery. However, such surgeries do not change the contact topology, i.e.\ we can isotope it back to \Cref{prop:homotopy}. To rigorously prove those claims is non-trivial. However, the following special case is easier to establish: \Cref{prop:homotopy} holds if we apply $2k$-subcritical surgeries to  $\partial(V\times \disk)\backslash \partial(U\times \disk)$. Following \cite[Proposition 4.4]{Laz20}, we get extra Reeb orbits from the surgery handles whose $\Z/2$-grading is the same as $\gamma_{p_{\max}}$. Note that $\gamma_{p_{\max}}$ is simple; we can argue that curves with the positive puncture asymptotic to $\gamma_{p_{\max}}$ are cut out transversely. Therefore, the key properties of $\gamma_{p_{\max}}$ are preserved under those surgeries, and the proof goes through.
\end{remark}

\begin{proof}[Proof of \Cref{thm:V}]
    By \Cref{prop:homology} and $\pi_1(V\backslash U)$ is abelian, we have the following commutative diagram
    $$
    \xymatrix{
     \pi_1(V\times \{(0,1)\}\backslash U\times \{(0,1)\}) \ar[r]\ar[d]^{\simeq} & \pi_1(V\times \disk \backslash W)\ar[d] \\
     H_1(V\times \{(0,1)\}\backslash U\times \{(0,1)\}) \ar[r]^{\qquad \qquad \simeq} & H_1(V \times \disk \backslash W) 
    }
    $$
    By \Cref{prop:homotopy}, we have $\pi_1(V\times \{(0,1)\}\backslash U\times \{(0,1)\}) \to \pi_1(V\times \disk \backslash W)$ is an isomorphism. Combining \Cref{prop:homology} with Hurewicz theorem and Whitehead theorem, $V\times \{(0,1)\}\backslash U\times \{(0,1)\}\hookrightarrow V\times \disk \backslash W$ is a homotopy equivalence. 
\end{proof}

\begin{proof}[Proof of \Cref{thm:main}]
By \Cref{thm:V}, $\disk^{n+1}\backslash W$ is homotopy equivalent to $\Sphere^1$. The claim follows from Levine's theorem \cite{Levine65}.    
\end{proof}
\section{Filling of the binding of a trivial open book}\label{s6}
\subsection{Pseudocycles and integral homology}
We will use pseudocycles to represent the integer homology classes as in \cite{Pseudocycles}. Let $X$ be a smooth manifold. A subset $Z$ of $X$ is said to have dimension at most $k$ if there exists a $k$-dimensional manifold $Y$ and a smooth map $h: Y\to X$ such that the image of $h$ contains $Z$. If $\rho:M\to X$ is a continuous map between topological spaces, the boundary of $\rho$ is the set
$$\Bd\rho = \bigcap_{K\subset M \text{compact}}\overline{f(M-K)}.$$
A smooth map $\rho:M\to X$ is a $k$-pseudocycle if $M$ is an oriented $k$-manifold, $f(M)$ is precompact in $X$ and the dimension of $\Bd\rho$ is at most $k-2$. Two $k$-pseudocycles $\rho_0:M_0\to X,\rho_1:M_1\to X$ are equivalent if there exists a smooth oriented manifold $\widetilde{M}$ and a smooth map $\widetilde{\rho}:\widetilde{M}\to X$ such that the image of $\widetilde{\rho}$ is a precompact subset of $X$ and 
$$\dim \Bd \widetilde{\rho}\le k-1, \quad \partial \widetilde{M}=M_1-M_0, \quad  \widetilde{\rho}|_{M_0}=\rho_0, \quad \widetilde{\rho}|_{M_1}=\rho_1.$$
We denote the set of equivalence classes of pseudocycles in $X$ by $\mathcal{H}_*(X)$, which is graded by $\N$.
\begin{theorem}[{\cite[Theorem 1.1]{Pseudocycles}}]\label{thm:Zinger}
    $H_*(X)$ is naturally isomorphic to $\mathcal{H}_*(X)$.
\end{theorem}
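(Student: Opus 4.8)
This is Zinger's theorem; I would prove it by constructing mutually inverse homomorphisms $\Psi\colon H_*(X)\to \mathcal{H}_*(X)$ and $\Phi\colon \mathcal{H}_*(X)\to H_*(X)$. The map $\Psi$ is the easy direction. A singular homology class is represented, after smooth approximation, by a cycle $z=\sum_i a_i\sigma_i$ of smooth singular $k$-simplices. Gluing the simplices along the faces on which the coefficients cancel produces a compact topological space together with a map to $X$ which, away from the image of its codimension-$2$ skeleton, carries the structure of a smooth oriented $k$-manifold; that skeleton has dimension at most $k-2$. Removing it from the domain yields a (non-compact) oriented $k$-manifold $M$ and a smooth map $\rho\colon M\to X$ with precompact image whose boundary $\Bd\rho$ lies in the image of a $(k-2)$-manifold, i.e.\ a $k$-pseudocycle. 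One then checks that barycentric subdivision, the smoothing, and the choice of smooth representative alter $\rho$ only within its pseudocycle equivalence class, so $\Psi$ is a well-defined homomorphism.

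The content is the construction of $\Phi$. Given a $k$-pseudocycle $f\colon M\to X$, exhaust $M$ by compact sets; using that $\Bd f$ has dimension at most $k-2$, one produces for each open neighborhood $U\supseteq\Bd f$ a compact codimension-$0$ submanifold with boundary $M_0\subset M$ with $f(M\setminus M_0)\subset U$, and in particular $f(\partial M_0)\subset U$. The crucial point is that, because $\Bd f$ is covered by the image of a $(k-2)$-manifold, one can shrink $U$ so that $f_*[\partial M_0]$ vanishes in $H_{k-1}(U;\Z)$; choosing a singular chain $c$ in $U$ with $\partial c=f_*[\partial M_0]$, the chain $f_*[M_0]-c$ is a cycle in $X$, and $\Phi([f])$ is defined to be its homology class. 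A sequence of cobordism arguments --- invoking, for equivalences of pseudocycles, the sharper bound $\dim\Bd\le k-1$ on the cobounding map --- shows that this class is independent of the choices of $M_0$, $U$ and $c$, depends only on the equivalence class of $f$, and is additive.

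It then remains to check $\Phi\circ\Psi=\mathrm{id}$ and $\Psi\circ\Phi=\mathrm{id}$. The first is essentially formal: for the pseudocycle built from a smooth cycle, $\Bd$ already lies in a codimension-$2$ set, so no capping is needed and $\Phi$ returns the original class. For the second, given $f$, one realizes the nullhomology of $f_*[\partial M_0]$ in $U$ by a smooth chain in general position and glues the resulting manifold-with-boundary to the collar $M\setminus M_0$, exhibiting an explicit equivalence between the glued cycle $f_*[M_0]-c$ and $f$.

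The main obstacle is the assertion that $f_*[\partial M_0]$ can be made to vanish in $H_{k-1}(U;\Z)$ for $U$ a sufficiently small neighborhood of $\Bd f$. Since $\Bd f$ is only the \emph{image} of a $(k-2)$-manifold and need not admit a tubular neighborhood, this is not a formal consequence of a deformation retraction, and a naive transversality argument fails because $(k-1)+(k-2)$ need not be smaller than $\dim X$. One has to use the dimension bound more carefully: working with the map $h\colon Y^{k-2}\to X$ whose image contains $\Bd f$, one performs an explicit surgery on $\partial M_0$ inside $U$ guided by $h$, or argues by continuity for the inverse system of homologies of shrinking neighborhoods of the compact set $\overline{\Bd f}$, whose Čech homology vanishes in degree $k-1$. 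Carrying this out, together with the orientation bookkeeping in all the gluing and cobordism constructions, is where the real work of the proof lies.
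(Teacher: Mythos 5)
The paper does not prove this statement: it is quoted verbatim as Zinger's theorem (\cite[Theorem 1.1]{Pseudocycles}), and the text only records the two maps of the isomorphism --- capping $\rho_*[\overline{N}]$ off in a small neighborhood $U$ of $\Bd\rho$ with $H_k(X)\simeq H_k(X,U)$ in one direction, and deleting the codimension-$2$ skeleton of a simplicial cycle in the other --- exactly as in your sketch. Your outline follows Zinger's actual proof, and you correctly isolate the real technical point; for the record, the form in which Zinger resolves it is a neighborhood-shrinking lemma (for $A$ contained in the image of a smooth map from a manifold of dimension $\le k-2$, every neighborhood $W$ of $A$ contains a smaller neighborhood $U$ with $H_l(U;\Z)\to H_l(W;\Z)$ trivial for $l\ge k-1$), which is the precise statement your capping and cobordism steps need, rather than vanishing of the homology of a single $U$ or of a \v{C}ech limit.
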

The isomorphism $\mathcal{H}_*(X)\to H_*(X)$ is defined as follows: Let $\rho:M\to X$ be a $k$-pseudocycle, one can choose a compact $k$-submanifold with boundary, $\overline{N}\subset M$, such that $\rho (M\backslash N)$ is contained in a small neighborhood $U$ of $\Bd \rho$ where $H_k(X)\simeq H_k(X,U)$. Then $\rho_*[\overline{N}]\in H_k(X,U)\simeq H_k(X)$ is the image of $\rho$ in $H_k(X)$, where $[\overline{N}]\in H_k(\overline{N},\overline{N}\backslash N)$ is the fundamental class. One way to represent all classes in $H_*(X)$ by pseudocycles is as follows: we consider a triangulation of $X$, for a closed simplicial chain, we can obtain an open smooth manifold by taking the interior of the simplexes and the interior of the faces which are glued as they can be canceled by the closeness. The natural map from this open manifold to $X$ induces a pseudocycle, as the boundary is precisely the codimension $2$ skeleton of the simplexes involved. This pseudocycle represents the same homology class as the simplicial chain by the discussion above.
\begin{remark}
    By \cite[Remark 2]{Pseudocycles}, the theory of pseudocycles works for continuous maps. The only place needed smoothness ($C^1$ is sufficient) is the definition of dimension, so that \cite[Proposition 2.2]{Pseudocycles} holds\footnote{For otherwise, we have pathological examples like the Peano curve.}. 
\end{remark}

\subsection{Probing topology by holomorphic spheres}
To prove \Cref{thm:binding}, we use holomorphic spheres to probe the topology of the filling, i.e.\ a combination of arguments in \cite{BGZ} with classical intersection theories\footnote{It should be possible to prove \Cref{thm:V} using holomorphic spheres as well. But the method using the intersection theory of punctured holomorphic curves is more flexible and local, in particular, it is unclear whether \Cref{rmk:general} can be established using holomorphic spheres.}. We consider the moduli space
$$ \cN:=\left\{ u:\CP^1\to \widehat{V}\times \CP^1\left|\begin{array}{c}\overline{\partial}_J u=0, [u]=[\{\ast\} \times \CP^1],  u(0)\in \widehat{V}\times \disk^{\circ} \\ u(1)\in \widehat{V}\times \{(2,0)\}, u(\infty)\in \widehat{V}\times \{\infty\} \end{array}\right.\right\}$$
Here we use an almost complex structure such that 
\begin{itemize}
    \item It is compatible with $\widehat{\lambda}_V\oplus \omega_{\CP^1}$;
    \item It is a product complex structures on $(\widehat{V}\backslash V) \times \CP^1$;
    \item The $r$-coordinate on the collar end $(\widehat{V}\backslash V)=\partial V \times (1,\infty)_r$ is plurisubharmonic;
    \item $\widehat{W}$ is a holomorphic hypersurface;
    \item $\widehat{V}\times \{(2,0)\}$ and $\widehat{V}\times \{\infty\}$ are holomorphic hypersurface.
\end{itemize}
\begin{proposition}\label{prop:proper}
    $\mathrm{ev}_0:\cN\to \widehat{V}\times \disk^{\circ}, u\mapsto u(0)$ is a proper map of degree $1$.
\end{proposition}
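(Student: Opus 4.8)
\section*{Proof proposal for Proposition \ref{prop:proper}}

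The plan is to establish properness by a compactness argument that simultaneously rules out bubbling and rules out escape of curves to the infinite end of $\widehat{V}$, and then to compute the degree at a regular value $q_0$ lying deep inside the cylindrical end $\widehat{V}\setminus V$, where the almost complex structure is a product and every curve in $\cN$ is completely explicit. Write $[F]:=[\{\ast\}\times\CP^1]$ for the fiber class of $\widehat{V}\times\CP^1\to\widehat{V}$.

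First I would record an a priori dichotomy. Any $u\in\cN$ has symplectic area $\int u^*(d\widehat{\lambda}_V\oplus\omega_{\CP^1})=\langle d\widehat{\lambda}_V\oplus\omega_{\CP^1},[F]\rangle=\langle\omega_{\CP^1},[F]\rangle=:E_0$, a fixed constant, since $d\widehat{\lambda}_V$ is exact and $[F]$ is a fiber class; thus every curve in $\cN$ has the same minimal energy $E_0$. Now set $\Omega:=u^{-1}(\{r>1\})\subseteq\CP^1$. Because $J$ is a product on $(\widehat{V}\setminus V)\times\CP^1$ and $r$ is plurisubharmonic on the collar, $r\circ u$ is subharmonic on $\Omega$, with $r\circ u=1$ on $\partial\Omega$. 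If $\Omega$ were a nonempty proper subset of $\CP^1$, the maximum principle would force $\sup_\Omega r\circ u\le 1$, a contradiction. If $\Omega=\CP^1$, then $r\circ u$ is a subharmonic, hence constant, hence harmonic function on the closed surface $\CP^1$; since the complex Hessian of $r$ (pulled back to $\widehat{V}\times\CP^1$) is non-negative with kernel exactly the $\CP^1$-fiber directions, harmonicity of $r\circ u$ forces the $\widehat{V}$-component of $u$ to be constant, so $u$ maps into a single fiber $\{v_0\}\times\CP^1$ with $r(v_0)>1$. In either case this gives the uniform $C^0$-bound needed for Gromov compactness: for a sequence $u_k\in\cN$ with $u_k(0)\to q$, the values $r\circ u_k$ are bounded by $\max\!\big(1,\sup_k r(\pi_{\widehat{V}}(u_k(0)))\big)<\infty$, so all $u_k$ map into a fixed compact set $\{r\le C\}\times\CP^1$.

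Second, properness. By Gromov compactness a subsequence of $u_k$ converges to a stable map in class $[F]$. No bubbling can occur: each non-ghost component $v$ of a limit has energy $\langle\omega_{\CP^1},[\pi_{\CP^1}\circ v]\rangle>0$ (again by exactness of $d\widehat{\lambda}_V$), hence its $\CP^1$-degree $[v]\cdot[\widehat{V}\times\{\mathrm{pt}\}]$ is a positive integer; these degrees sum to $1$, so there is a unique non-ghost component and it has $\CP^1$-degree $1$, i.e.\ the limit is a single sphere $u_\infty$ in class $[F]$. The three marked constraints are closed conditions whose images lie in the pairwise-disjoint regions $\{|z|<1\}$, $\{z=(2,0)\}$, $\{z=\infty\}$ of the $\CP^1$-factor, bounded away from one another, so the marked points cannot collide nor be absorbed by ghost trees; hence $u_\infty\in\cN$ with $u_\infty(0)=q$. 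This proves $\mathrm{ev}_0$ is proper. That $\cN$ is a smooth oriented $(2n+2)$-manifold and $\mathrm{ev}_0$ is smooth uses that $[F]$ is primitive, so all curves are somewhere injective; a generic $J$ within the prescribed class achieves transversality for curves meeting $V\times\CP^1$, while fiber-type curves are automatically regular because over a fiber $u^*T(\widehat{V}\times\CP^1)\cong\underline{\C^{\,n}}\oplus\mathcal{O}(2)$ has vanishing $H^1$; the constrained expected dimension equals $\dim(\widehat{V}\times\disk^{\circ})$.

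Third, the degree. Take $q_0=(v_0,b)$ with $v_0$ in the cylindrical end and $r(v_0)>1$. By the dichotomy any $u\in\mathrm{ev}_0^{-1}(q_0)$ is of fiber type, contained in $\{v_0\}\times\CP^1$ where $J$ is the product structure, so $u=(v_0,\phi)$ with $\phi\in\mathrm{PSL}(2,\C)$; the constraints $\phi(\infty)=\infty$ and $\phi(1)=(2,0)$ force $\phi(z)=(2-b)z+b$, with $\phi(0)=b$. Thus $\mathrm{ev}_0^{-1}(q_0)$ is a single point, cut out transversely, and near it $\mathrm{ev}_0$ is, under these identifications, the identity map of $(\widehat{V}\setminus V)\times\disk^{\circ}$; the canonical orientation of $\cN$ and the complex orientation of $\widehat{V}\times\disk^{\circ}$ agree there, so the local degree is $+1$ and hence $\deg\mathrm{ev}_0=1$. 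The main obstacle is the $C^0$-bound dichotomy of the second paragraph: everything else is a routine application of Gromov compactness and exactness, but the statement that a curve in $\cN$ either stays over $V$ or degenerates to a single fiber of $\widehat{V}\times\CP^1\to\widehat{V}$ — which is exactly what prevents curves from running off to $r=\infty$ while $u(0)$ stays bounded — is the crux, and it is precisely here that plurisubharmonicity of $r$ and the product form of $J$ on the end are used.
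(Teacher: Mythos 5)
Your proof is correct and follows essentially the same route as the paper: the fiber class $[\{\ast\}\times\CP^1]$ cannot split into classes of positive symplectic area (ruling out bubbling), the maximum principle for the plurisubharmonic coordinate $r$ confines curves and identifies those through the end as fiber spheres, and the degree is computed at a point outside $V\times\disk^{\circ}$ where the unique regular fiber sphere contributes $+1$. Your write-up simply supplies the details (the $\Omega$-dichotomy, the ghost-component analysis, automatic regularity of fibers) that the paper leaves implicit.
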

\begin{proof}
    Since the homology class $[\{\ast\} \times \CP^1]$ can not decompose into multiple homology classes with positive symplectic area, compactification of $\mathrm{ev}_0^{-1}(K)$ does not involve sphere bubbles for any compact subset $K\subset \widehat{V}\times \disk^{\circ}$. The degree can be computed by looking at a point $p$ outside $V\times\disk^{\circ}$ in $\widehat{V}\times \disk^\circ$. By the maximum principle for the $r$-coordinate, $\mathrm{ev}_0^{-1}(p)$ must be the fiber sphere in $\widehat{V}\times \CP^1$, which is cut out transversely. With a suitable orientation on $\cN$, the degree of $\mathrm{ev}_0$ is $1$.
\end{proof}
\begin{proposition}\label{prop:intersection2}
    Under the assumptions of \Cref{thm:binding}, any curve in $\cN$ intersects $\widehat{W}$ once. 
\end{proposition}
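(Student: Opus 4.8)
The plan is to pin down the algebraic intersection number $[u]\cdot[\widehat{W}]$, show it equals $1$, and then read off the conclusion from positivity of intersection with the holomorphic hypersurface $\widehat{W}$. Since $u$ is a closed holomorphic sphere rather than a punctured curve, this is the classical intersection number of \cite{McDSal} and no Siefring-type corrections enter.

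First I would check that $\ima u\not\subset\widehat{W}$ for every $u\in\cN$. The completion $\widehat{W}$ is obtained from the compact filling $W\subset V\times\disk$ by attaching the cylindrical end $\partial V\times[1,\infty)\times\{0\}\subset\widehat{V}\times\{0\}$; in particular $\widehat{W}\subset\widehat{V}\times\overline{\disk}$, which is disjoint from $\widehat{V}\times\{\infty\}$. As $u(\infty)\in\widehat{V}\times\{\infty\}$ by the definition of $\cN$, the image of $u$ cannot lie in $\widehat{W}$. Consequently, $\widehat{W}$ being a $J$-holomorphic hypersurface, positivity of intersection \cite[Exercise 2.6.1]{McDSal} applies at every point of $\ima u\cap\widehat{W}$: each local intersection index is a positive integer, equal to $1$ precisely when the intersection there is transverse, and the sum of these indices is $[u]\cdot[\widehat{W}]$.

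Next I would compute $[u]\cdot[\widehat{W}]$. Because $\widehat{W}$ is a properly embedded (hence closed) submanifold of $\widehat{V}\times\CP^1$ while $u$ is a compact cycle, this count depends only on the class $[u]=[\{\ast\}\times\CP^1]\in H_2(\widehat{V}\times\CP^1;\Z)$, so I may evaluate it on the representative $\{\ast\}\times\CP^1$ with $\ast$ placed in the cylindrical end $\widehat{V}\setminus V$: there $\widehat{W}$ coincides with $\widehat{V}\times\{0\}$, so $\{\ast\}\times\CP^1$ meets $\widehat{W}$ transversally in the single point $(\ast,0)$, giving $[u]\cdot[\widehat{W}]=1$ (the sign is $+1$, whether one invokes the canonical orientations of complex submanifolds or the positivity already noted). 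An equivalent piece of bookkeeping: $\widehat{W}$ and $\widehat{V}\times\{0\}$ agree outside the compact set $V\times\disk$, and their difference is a compact $(\dim V)$-cycle supported in $V\times\disk$, which is null-homologous because $H_{\dim V}(V\times\disk;\Z)\cong H_{\dim V}(V;\Z)=0$ (as $\partial V\neq\emptyset$), so $[\widehat{W}]=[\widehat{V}\times\{0\}]$ in Borel--Moore homology and the fibre computation gives $[u]\cdot[\widehat{V}\times\{0\}]=1$. Combining with the previous paragraph, $\ima u\cap\widehat{W}$ is exactly one point and $u$ meets $\widehat{W}$ transversally there; this is \Cref{prop:intersection2}, and together with \Cref{prop:proper} it is what drives the proof of \Cref{thm:binding}.

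The point needing the most care — and the only real subtlety — is the non-compactness of the target $\widehat{V}\times\CP^1$ and of $\widehat{W}$: one must confirm that the algebraic intersection number of a compact cycle with a closed hypersurface is a genuine homotopy invariant in this setting, and that $\widehat{W}$ truly matches $\widehat{V}\times\{0\}$ along the end. Both are immediate from the standard model for the symplectic completion of a codimension-$2$ filling, so the difficulty is organizational rather than substantive; the essential geometric inputs are just positivity of intersection and the vanishing of the top-degree homology of the Liouville domain $V$.
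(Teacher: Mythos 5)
Your proposal is correct and follows the same route as the paper: the paper's proof is exactly "the intersection number only depends on the homology class; push the sphere to the end of $\widehat{V}$, where it clearly equals $1$," with positivity of intersection then upgrading the algebraic count to a single transverse geometric intersection point. Your write-up merely makes explicit the steps the paper leaves implicit ($\ima u\not\subset\widehat{W}$ via $u(\infty)\in\widehat{V}\times\{\infty\}$, and the identification of $[\widehat{W}]$ with $[\widehat{V}\times\{0\}]$ along the end).
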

\begin{proof}
The intersection number only depends on the homology classes. 
By pushing the curve to the end of $\widehat{V}$, it is clear the intersection number is $1$.
\end{proof}
Let $\rho:M\to \widehat{V}\times \disk^{\circ}$ be a smooth map from a manifold $M$, we define 
$\cN(\rho):=\{ u:\CP^1\to \widehat{V}\times \CP^1,m\in M|\overline{\partial}_J u=0, [u]=[\{\ast\} \times \CP^1],  u(0)=\rho(m), u(1)\in \widehat{V}\times \{(2,0)\}, u(\infty)\in \widehat{V}\times \{\infty\} \}$.

\begin{proposition}\label{prop:pseudocycle}
    Let $\rho:M\to \widehat{V}\times \CP^1\backslash\{(2,0),\infty\}$ be a $k$-pseudocycle from a triangulation of $\widehat{V}\times \CP^1\backslash\{(2,0),\infty\}$ explained after \Cref{thm:Zinger}. For generic $J$, the evaluation map $\mathrm{ev}_0:\cN_J(\rho)\to \widehat{V}\times \disk, (u,m)\mapsto u(0)$ is a  $k$-pseudocycle equivalent to $\rho$.
\end{proposition}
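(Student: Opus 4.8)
The plan is to realize $\cN_J(\rho)$ as a transversely cut out fibre product, verify the pseudocycle axioms by a Gromov compactness analysis, and then identify its class with that of $\rho$ by a degree count; throughout, the natural ambient space for all the pseudocycles is $\widehat V\times\CP^1\setminus\{(2,0),\infty\}$, into which $\mathrm{ev}_0$ lands because $u(0)=\rho(m)$. First I would introduce $\cN_J^{\mathrm{free}}$, the moduli space of $J$-holomorphic $u\colon\CP^1\to\widehat V\times\CP^1$ in the class $A=[\{\ast\}\times\CP^1]$ with $u(1)\in\widehat V\times\{(2,0)\}$, $u(\infty)\in\widehat V\times\{\infty\}$ and $u(0)$ free; a Riemann--Roch count makes $\mathrm{ev}_0\colon\cN_J^{\mathrm{free}}\to\widehat V\times\CP^1$ a map between equidimensional manifolds, consistently with \Cref{prop:proper}. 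Since $\pi_{\CP^1}\circ u$ represents the fundamental class of $\CP^1$ it is a biholomorphism, so every $u$ is an embedding, in particular somewhere injective, and is contained in none of the holomorphic hypersurfaces $\widehat W$, $\widehat V\times\{(2,0)\}$, $\widehat V\times\{\infty\}$ (for the last two because the $\CP^1$-coordinates of $u(0),u(1),u(\infty)$ are distinct). The standard transversality package for somewhere injective curves then makes $\cN_J^{\mathrm{free}}$ a manifold with $\mathrm{ev}_0$ transverse to the smooth map $\rho$ from the $k$-manifold $M$, for generic $J$ in the prescribed class, so that $\cN_J(\rho)=\cN_J^{\mathrm{free}}\times_{\mathrm{ev}_0,\rho}M$ is a smooth manifold of dimension $k$.

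Next I would check the pseudocycle axioms. Precompactness of $\mathrm{ev}_0(\cN_J(\rho))\subseteq\rho(M)$ is automatic since $\rho$ is a pseudocycle. If the $\widehat V$-component of some $u$ enters the region $r>1$, then on that preimage $r\circ\pi_{\widehat V}\circ u$ is subharmonic (there $J$ is a product with $r$ plurisubharmonic) with boundary values at most $1$, hence $\le 1$; thus every curve through a fixed compact set has image in a fixed compact subset of $\widehat V\times\CP^1$ depending only on $\rho(M)$. As $A$ has minimal positive symplectic area among all decompositions, no sphere bubbling occurs (ghost components do not change $\mathrm{ev}_0$), and since the domain is closed there is no breaking. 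Hence the only degeneration in a Gromov limit of a sequence in $\cN_J(\rho)$ is $m$ tending to $\Bd\rho$, so $\Bd\bigl(\mathrm{ev}_0|_{\cN_J(\rho)}\bigr)$ lies in a set of dimension at most $\dim\Bd\rho\le k-2$ (the curve factor over a fixed $m$ being $0$-dimensional). Therefore $\mathrm{ev}_0|_{\cN_J(\rho)}$ is a $k$-pseudocycle.

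To identify its homology class I would use that on $\cN_J(\rho)$ one has $\mathrm{ev}_0=\rho\circ\mathrm{pr}_M$, where $\mathrm{pr}_M\colon\cN_J(\rho)\to M$ is the forgetful map, proper by the compactness of the previous step. Its generic fibre is $\mathrm{ev}_0^{-1}(\rho(m))\subset\cN_J^{\mathrm{free}}$, a finite set whose signed count is the degree of $\mathrm{ev}_0\colon\cN_J^{\mathrm{free}}\to\widehat V\times\CP^1\setminus\{(2,0),\infty\}$; evaluating at a point with large $r$-coordinate, where the unique solution is the corresponding fibre sphere and is regular, the argument of \Cref{prop:proper} shows this degree equals $1$ for a suitable orientation of $\cN$. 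So $\mathrm{pr}_M$ has degree $1$, and choosing a compact exhaustion piece of $\cN_J(\rho)$ as the $\mathrm{pr}_M$-preimage of a large compact piece of $M$, the class recipe following \Cref{thm:Zinger} gives $(\mathrm{ev}_0|_{\cN_J(\rho)})_*=\rho_*$ on the relevant relative homology, i.e. $\mathrm{ev}_0|_{\cN_J(\rho)}$ and $\rho$ represent the same class in $H_k(\widehat V\times\CP^1\setminus\{(2,0),\infty\})$; by \Cref{thm:Zinger} they are equivalent pseudocycles.

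The hard part will be the compactness step: simultaneously confining all curves to a fixed compact region via the plurisubharmonicity of $r$ and the product structure near the end, excluding all bubbling via minimality of the fibre class $[\{\ast\}\times\CP^1]$, and controlling the contribution of $m\to\Bd\rho$ finely enough that the boundary of the resulting evaluation pseudocycle has dimension at most $k-2$. The transversality and degree computations, by contrast, are routine given that the curves are embedded and that \Cref{prop:proper} already supplies the normalization.
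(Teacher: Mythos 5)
Your proposal is correct and follows essentially the same route as the paper: realize $\cN_J(\rho)$ as a transversely cut out fiber product, control $\Bd(\mathrm{ev}_0)$ by the codimension-$2$ skeleton underlying the triangulation of $\rho$ (equivalently, by properness of $\mathrm{pr}_M$, since $\mathrm{ev}_0=\rho\circ\mathrm{pr}_M$), and identify the homology classes via the degree-$1$ projection to $M$ supplied by \Cref{prop:proper}. One small caveat: $\pi_{\CP^1}$ is only $(J,\mathbf{i})$-holomorphic over $(\widehat V\setminus V)\times\CP^1$, so $\pi_{\CP^1}\circ u$ need not be a biholomorphism in general; somewhere injectivity should instead be deduced from the indecomposability of the class $[\{\ast\}\times\CP^1]$, which you already invoke to rule out bubbling.
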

\begin{proof}
As $\rho$ is from a triangulation of $\widehat{V}\times \CP^1\backslash\{(2,0),\infty\}$, there exists a possibly disconnected manifold $Y$ whose components have dimension at most $k-2$ and a smooth map $h:Y\to \widehat{V}\times \CP^1\backslash\{(2,0),\infty\}$, such that $h(Y)$ contains the boundary $\Bd \rho$, i.e.\ $Y$ is the disjoint union of the interior of the simplexes of the codimension $2$ skeleton of the simplexes in defining $\rho$. For a generic almost complex structure, $\cN_J(h)$ is an at most $k-2$ dimensional manifold and $\cN_J(\rho)$ is a $k$-dimensional manifold. It is clear that $\Bd(\mathrm{ev}_0|_{\cN_J(\rho)})\subset \mathrm{ev}_0(\cN_J(h))$. Let $N$ be an open submanifold of $M$, such that the pseudocycle represent $\rho_*[\overline{N}]\in H_k(\widehat{V}\times \CP^1\backslash\{(2,0),\infty\}, U)\simeq H_k(\widehat{V}\times \CP^1\backslash\{(2,0),\infty\})$, where $U$ is a small neighborhood of $h(Y)$. Since $\pi:\cN_J(\rho|_{\overline{N}})=\mathrm{ev}_0^{-1}(\overline{N})\to \overline{N}, (u,m)\mapsto m$ is of degree $1$ by \Cref{prop:proper}. Hence $\rho$ and $\mathrm{ev}_0$ induce the same homology class.
\end{proof}

\begin{proposition}\label{prop:surj}
    Under the assumption of \Cref{thm:binding}, $W\subset V\times \disk$ induces a surjection on homology.
\end{proposition}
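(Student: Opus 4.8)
The plan is to prove that the inclusion $\iota_{\widehat W}\colon\widehat W\hookrightarrow\widehat V\times\disk^{\circ}$ is onto on $H_k$ for every $k$; since $V\times\disk$ deformation retracts onto $V\times\{0\}$ and $\widehat W$ onto $W$, this gives the statement. A preliminary point I would record first is that $\mathrm{int}(W)\subset V^{\circ}\times\disk^{\circ}$ — because $W\cap\partial(V\times\disk)=\partial W=\partial V\times\{0\}$ — so that, together with its Liouville collar, $\widehat W$ is a properly embedded holomorphic hypersurface \emph{contained entirely in} $\widehat V\times\disk^{\circ}$. Now fix $c\in H_k(\widehat V\times\disk^{\circ})$. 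Using \Cref{thm:Zinger} and the triangulation construction after it, represent $c$ by a pseudocycle $\rho\colon M\to\widehat V\times\disk^{\circ}$, also viewed as a pseudocycle in $\widehat V\times\CP^1\setminus\{(2,0),\infty\}$. By \Cref{prop:pseudocycle}, for generic $J$ the evaluation map $\mathrm{ev}_0\colon\cN_J(\rho)\to\widehat V\times\disk$ is a $k$-pseudocycle equivalent to $\rho$, hence represents $c$; I also write $[\mathrm{ev}_0]$ for the image of $c$ in $H_k(\widehat V\times\CP^1)$.

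Next I would bring in the filling. For generic $J$ — admissible since the primitive class $A=[\{\ast\}\times\CP^1]$ makes all curves somewhere injective — \Cref{prop:intersection2} with positivity of intersections forces each $u\in\cN_J(\rho)$ to meet $\widehat W$ transversally at a single point $z_u\in\CP^1$ (it is not contained in $\widehat W$, since $u(\infty)\in\widehat V\times\{\infty\}$ is disjoint from $\widehat W\subset\widehat V\times\disk^{\circ}$), and $z_u$ depends smoothly on $u$ by the implicit function theorem; this yields a smooth map $\mathrm{ev}_W\colon\cN_J(\rho)\to\widehat W$, $u\mapsto u(z_u)$. I claim $\mathrm{ev}_W$ is again a $k$-pseudocycle: $A$ cannot decompose into sphere classes of positive area (the $\widehat V$-factor is exact), so there is no bubbling; the maximum principle for the plurisubharmonic $r$-coordinate, as in the proof of \Cref{prop:proper}, confines every $u$ to $V\times\CP^1$; and the three marked points $0,1,\infty$ prevent domain degeneration. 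So, exactly as for $\mathrm{ev}_0$ in \Cref{prop:pseudocycle}, the only boundary of $\cN_J(\rho)$ comes from the boundary strata of $\rho$, giving $\dim\Bd(\mathrm{ev}_W)\le k-2$. By \Cref{thm:Zinger}, $\mathrm{ev}_W$ then represents a well-defined class $c'\in H_k(\widehat W)$, and it remains to show $(\iota_{\widehat W})_*c'=c$.

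I would establish this last identity inside the universal curve $\mathcal C:=\cN_J(\rho)\times\CP^1$, with $\mathrm{ev}\colon\mathcal C\to\widehat V\times\CP^1$, $(u,z)\mapsto u(z)$: here $\mathrm{ev}$ restricts to $\mathrm{ev}_0$ on the section $\cN_J(\rho)\times\{0\}$ and to $\iota_{\widehat W}\circ\mathrm{ev}_W$ on the (transverse, hence section) locus $\mathrm{ev}^{-1}(\widehat W)$. Since $H^1(\CP^1)=0$, the Poincar\'e duals in $H^2(\mathcal C)$ of these two sections both have fibre degree one — for the second this is precisely $[\widehat W]\cdot A=1$ of \Cref{prop:intersection2} — so the difference of their homology classes is $\gamma\times[\CP^1]$ for some $\gamma\in H_{k-2}(\cN_J(\rho))$. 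Pushing forward by $\mathrm{ev}$ and using $(\pi_{\widehat V})_*A=0$, one checks $\mathrm{ev}_*(\gamma\times[\CP^1])$ pairs trivially with $\pi_{\widehat V}^*H^k(\widehat V)$, hence lies in the summand $H_{k-2}(\widehat V)\otimes H_2(\CP^1)$ of the K\"unneth splitting $H_k(\widehat V\times\CP^1)=H_k(\widehat V)\oplus\bigl(H_{k-2}(\widehat V)\otimes H_2(\CP^1)\bigr)$. On the other hand, both $\mathrm{ev}_0$ and $\iota_{\widehat W}\circ\mathrm{ev}_W$ have image in $\widehat V\times\disk^{\circ}$ (this is where the preliminary observation enters), so their classes in $H_k(\widehat V\times\CP^1)$ lie in the image of the \emph{injective} map $H_k(\widehat V\times\disk^{\circ})\hookrightarrow H_k(\widehat V\times\CP^1)$, that is, in the $H_k(\widehat V)$-summand. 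As the two summands meet only in $0$, we get $\mathrm{ev}_*(\gamma\times[\CP^1])=0$, so $(\iota_{\widehat W})_*c'$ and $c$ have the same image in $H_k(\widehat V\times\CP^1)$, and injectivity gives $(\iota_{\widehat W})_*c'=c$. Since $c$ and $k$ were arbitrary, $\iota_{\widehat W}$, hence $\iota_W$, is surjective on homology. I expect the main obstacle to be precisely this final step: the sphere family through $\rho$ genuinely leaves $\widehat V\times\disk^{\circ}$ — it sweeps across $\widehat V\times\{(2,0)\}$ and $\widehat V\times\{\infty\}$ — so one cannot simply homotope $\mathrm{ev}_0$ to $\mathrm{ev}_W$ within the complement, and it is the interplay of the K\"unneth decomposition of $H_*(\widehat V\times\CP^1)$ with the vanishing $(\pi_{\widehat V})_*A=0$ that recovers the correct homology class; the rest is the (routine but slightly delicate) bookkeeping needed to run these intersection computations inside the noncompact pseudocycle framework of \Cref{s6}.
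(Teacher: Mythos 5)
Your setup coincides with the paper's: you use the same moduli space $\cN_J(\rho)$ through a pseudocycle $\rho$, the same unique transverse intersection point $z_u$ from \Cref{prop:intersection2}, and \Cref{prop:pseudocycle} to identify $[\mathrm{ev}_0]$ with $[\rho]$. The divergence — and the gap — is in the final step, where you must show $\mathrm{ev}_0$ and $\iota_{\widehat W}\circ\mathrm{ev}_W$ represent the same class. You assert that ``one cannot simply homotope $\mathrm{ev}_0$ to $\mathrm{ev}_W$,'' but that is exactly what the paper does: the straight-line homotopy $(t,u)\mapsto u(t\,z_u)$, $t\in[0,1]$, is a pseudocycle equivalence taking place in $\widehat V\times(\CP^1\setminus\{\infty\})=\widehat V\times\C$. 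There is no need for the homotopy to stay inside $\widehat V\times\disk^{\circ}$ or to avoid $W$; it only needs to stay in a space homotopy equivalent to $V\times\disk$, and $\widehat V\times\C$ is such a space (the segment from $0$ to $z_u$ in $\C$ never passes through $\infty$). In the language of your universal-curve picture: the two sections $u\mapsto 0$ and $u\mapsto z_u$ both factor through the contractible set $\C\subset\CP^1$, so they are homotopic through sections avoiding $\infty$, and your difference class $\gamma$ is simply zero.

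The substitute K\"unneth argument you give in its place does not close the gap. First, the bookkeeping is not legitimate as stated: $\cN_J(\rho)$ is noncompact, so the ``homology classes of the two sections'' and their ``Poincar\'e duals in $H^2(\mathcal C)$'' are not defined in ordinary (co)homology; everything would have to be rerun in the pseudocycle framework, which you do not do. Second, and more seriously, the key vanishing claim — that $\mathrm{ev}_*(\gamma\times[\CP^1])$ pairs trivially with $\pi_{\widehat V}^*H^k(\widehat V)$ because $(\pi_{\widehat V})_*A=0$ — does not follow. That pairing equals $\langle\alpha,(\pi_{\widehat V}\circ\mathrm{ev})_*(\gamma\times[\CP^1])\rangle$, i.e.\ it is computed by the fiber integral over $\CP^1$ of $(\pi_{\widehat V}\circ\mathrm{ev})^*\alpha$ paired against $\gamma$. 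The condition $(\pi_{\widehat V})_*A=0$ only controls the image of a \emph{single} fiber class ($k=2$, $\gamma=\mathrm{pt}$); for $k>2$ the family of spheres over a $(k-2)$-cycle $\gamma$ can sweep out a homologically nontrivial $k$-cycle in $\widehat V$ — these fiber integrals are precisely Gromov--Witten-type classes and have no reason to vanish. So the argument that the difference lies in the $H_{k-2}(\widehat V)\otimes H_2(\CP^1)$ summand is unjustified, and the proof as written is incomplete. The fix is to discard the K\"unneth detour and use the explicit homotopy $u(t z_u)$ inside $\widehat V\times(\CP^1\setminus\{\infty\})$, exactly as in the paper (which, to make $z_u\neq 0$ and the argument cleanest, also first arranges that $\overline{\rho(M)}$ is disjoint from $W$ by pushing representatives toward $V\times\Sphere^1$).
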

\begin{proof}
    Let $\rho:M\to V\times \disk$ be a $k$-pseudocycle. We may assume $\overline{\rho(M)}$ does not interest $W$ as we can represent all the homology classes by pseudocycles close to the boundary part $V\times S^1$. For every holomorphic disk $u$ in $\cN_J(\rho)$, $u$ intersects $\widehat{W}$ once transversely by \Cref{prop:intersection2} at $p_u\in \CP^1\backslash\{0,1,\infty\}$.  Then we have $$\mathrm{ev}^{[0,1]}:[0,1]_t\times \cN_J(\rho) \to \widehat{V} \times \mathbb{CP}^1\backslash\{\infty\}, \qquad (t,[(u,m)]) \mapsto u(tp_u) $$
    defines a pseudocycle equivalence between $\mathrm{ev}^{t=0}$ and $\mathrm{ev}^{t=1}$. By \Cref{prop:pseudocycle}, $\mathrm{ev}^{t=0}$ is equivalent to $\rho$. Since $\mathrm{ev}^{t=1}$ is contained in $\widehat{W}$. The claim follows.  
\end{proof}

\begin{proposition}
    Under the assumption of \Cref{thm:binding}, $W\subset V\times \disk$ induces a surjection on the fundamental group.
\end{proposition}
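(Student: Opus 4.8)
The plan is to upgrade \Cref{prop:surj} to the fundamental group in the same spirit that \Cref{prop:homotopy} upgrades \Cref{prop:homology}, but using the holomorphic spheres of $\cN$ and — crucially — arguing on one well-chosen connected component of the moduli space rather than pulling back by an individual loop (the latter would entangle the argument in base-point/conjugacy bookkeeping that, for a non-abelian $\pi_1$, need not close up). First I would single out the connected component $\cN_0\subset\cN$ containing the ``horizontal'' curves $z\mapsto (v_0,\psi(z))$ with $v_0$ in the collar $\widehat{V}\setminus V$ and $\psi$ the unique automorphism of $\CP^1$ with $\psi(\infty)=\infty$, $\psi(1)=(2,0)$, $\psi(0)\in\disk^{\circ}$: these are $J$-holomorphic because $J$ is the product structure over $(\widehat{V}\setminus V)\times\CP^1$, and they form a connected family parametrized by the collar, hence lie in a single component. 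By \Cref{prop:proper} the map $\mathrm{ev}_0$ is proper, so its restriction $\mathrm{ev}_0|_{\cN_0}$ to the closed-and-open subset $\cN_0$ is proper as well; evaluating its degree over a regular value $q$ in the region $(\widehat{V}\setminus V)\times\disk^{\circ}$, where the maximum principle forces the only preimage to be the horizontal curve through $q$ (cut out transversely), one gets $\deg(\mathrm{ev}_0|_{\cN_0})=1$. Since a proper degree-one map between connected oriented manifolds is surjective on $\pi_1$ (the cover corresponding to the image of $\pi_1$ is either noncompact, forcing the degree to vanish, or a finite cover whose degree divides $1$), the map $(\mathrm{ev}_0|_{\cN_0})_*\colon\pi_1(\cN_0)\to\pi_1(\widehat{V}\times\disk^{\circ})$ is onto.

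Next I would produce the sliding homotopy. By \Cref{prop:intersection2} and positivity of intersection of $J$-holomorphic submanifolds, every $u\in\cN$ meets $\widehat{W}$ in exactly one point $p_u$, transversely; arranging (as we may, since $\widehat{W}\subset\widehat{V}\times\disk^{\circ}$) that $\widehat{W}$ is disjoint from $\widehat{V}\times\{(2,0)\}$ and $\widehat{V}\times\{\infty\}$, we have $p_u\notin\{1,\infty\}$, and $p_u$ depends smoothly on $u$. Define $\mathrm{ev}_W\colon\cN_0\to\widehat{W}$ by $\mathrm{ev}_W(u)=u(p_u)$ and the map $\Phi\colon[0,1]\times\cN_0\to\widehat{V}\times\CP^1$, $\Phi(\tau,u)=u(\gamma_u(\tau))$, where $\gamma_u$ is the minimizing geodesic from $0$ to $p_u$ in the round $\CP^1$, which is unique (as $p_u$ is not the antipode $\infty$ of $0$) and varies continuously with $u$. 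Because $\gamma_u$ has length $<\pi$ it avoids $\infty$, and because the unique intersection of $u$ with the slice $\widehat{V}\times\{\infty\}$ is at $\infty$ (positivity again), $\Phi$ lands in $\widehat{V}\times(\CP^1\setminus\{\infty\})=\widehat{V}\times\C$. Thus $\Phi$ is a homotopy, inside $\widehat{V}\times\C$, from $\iota_0\circ\mathrm{ev}_0$ to $\iota_W\circ\mathrm{ev}_W$, where $\iota_0\colon\widehat{V}\times\disk^{\circ}\hookrightarrow\widehat{V}\times\C$ and $\iota_W\colon\widehat{W}\hookrightarrow\widehat{V}\times\C$.

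To finish: $\iota_0$ is a homotopy equivalence, so $\iota_0\circ\mathrm{ev}_0$ is $\pi_1$-surjective onto $\pi_1(\widehat{V}\times\C)$; homotopic maps have conjugate images on $\pi_1$ and the whole group is conjugation-invariant, hence $\iota_W\circ\mathrm{ev}_W$ is $\pi_1$-surjective onto $\pi_1(\widehat{V}\times\C)$ as well. Since this factors through $\pi_1(\widehat{W})$ at a base point lying in $\widehat{W}$, the inclusion $\iota_W$ is surjective on $\pi_1$. Finally $\widehat{W}\simeq W$ and $\widehat{V}\times\C\simeq\widehat{V}\times\disk^{\circ}\simeq V\times\disk$, and these equivalences are compatible with the inclusions, so $\pi_1(W)\to\pi_1(V\times\disk)$ is surjective.

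The analytic inputs are all in place already: transversality of $\cN$ and of $\cN_0$, Gromov compactness without bubbling (the class $[\{\ast\}\times\CP^1]$ is indecomposable), and positivity of intersection, all of which were invoked in \Cref{prop:proper,prop:intersection2}. The part I expect to require the most care — and the main conceptual point — is the two structural observations that make the global argument work: that $\mathrm{ev}_0$ restricted to the collar-component $\cN_0$ still has degree exactly one (this is where the maximum-principle picture of curves over $\widehat{V}\setminus V$ is essential), and that the sliding homotopy never hits $\widehat{V}\times\{\infty\}$, so that it takes place in a space still $\pi_1$-equivalent to $V\times\disk$. One must also double-check, at the last step, that the identifications $\widehat{W}\simeq W$ and $\widehat{V}\times\C\simeq V\times\disk$ are $\pi_1$-compatible with the inclusion of $W$; this is routine but should be stated. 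Compared with \Cref{prop:homotopy} the sphere setting is in fact easier, since there are no SFT limits and hence no base-point tracking along broken configurations.
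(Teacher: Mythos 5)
Your proof is correct, and it shares the geometric core of the paper's argument (sliding the evaluation point along the curve to its unique transverse intersection $p_u$ with $\widehat{W}$, which exists and avoids $\{1,\infty\}$ by \Cref{prop:intersection2} and positivity of intersection), but it packages the $\pi_1$-statement differently. The paper argues loop by loop: for an embedded loop $\gamma$ based at a point $q\in W$ close to $\partial V\times\{0\}$, it considers the one-dimensional moduli space $\cN_J(\gamma)$, extracts the degree-one circle component containing the fiber sphere through $q$, and slides it into $\widehat{W}$; base-point control is automatic because the fiber sphere through $q$ has $p_u=0$, so that point of the homotopy is stationary at $q$. You instead work with the full $(2n+2)$-dimensional component $\cN_0$ containing the horizontal spheres, observe that $\mathrm{ev}_0|_{\cN_0}$ is proper of degree one (the restriction to a closed-and-open component is still proper, and the fibre over a point with $r>1$ is the single transversely cut-out horizontal curve), invoke the standard covering-space fact that a proper degree-one map of connected oriented manifolds is $\pi_1$-surjective, and then transport surjectivity through the sliding homotopy using the fact that homotopic maps have conjugate images on $\pi_1$ and the full group is conjugation-invariant. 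What your route buys is exactly what you advertise: no base-point or broken-configuration bookkeeping, and a statement that manifestly does not depend on choosing a loop. What it costs is the extra structural input --- identifying $\cN_0$, checking that its degree is one while all other components have degree zero, and the proper-degree-one-implies-$\pi_1$-surjective lemma for open manifolds --- none of which is problematic here. Two small points to state explicitly if you write this up: that $p_u$ depends smoothly on $u$ (this follows from transversality of the single intersection point, itself a consequence of positivity of intersection plus intersection number one), and that $\widehat{W}$ indeed avoids $\widehat{V}\times\{(2,0)\}$ and $\widehat{V}\times\{\infty\}$ (its cylindrical end follows the Liouville flow out of $\partial V\times\{0\}$ and so stays over $0\in\C$); the paper makes the same implicit assumptions.
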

\begin{proof}
    We fix a point $q$ near the boundary of $\partial(V\times \disk)$ and contained in $W$. If we view $q$ as a map from $\{\ast\}$ to $V\times \disk$, $\cN_J(q)$ consists of a single curve (the fiber sphere) as $q$ is close to $\partial V \times \{0\}$. Let $\gamma$ be an embedded loop in $V\times \disk$ based at $q$. Then $\cN_J(\gamma)$ is a one-dimensional closed manifold for a generic choice of $J$. By \Cref{prop:intersection2}, each $u\in \cN_J(\gamma)$ intersects $\widehat{W}$ at $p_u\in \CP^1\backslash\{1,\infty\}$. Let $\cM\simeq \Sphere^1$ be the connected component of $\cN_J(\gamma)$, such that $\mathrm{ev}_0:\cM \to \gamma$ is of degree $1$. In particular, $\cN_J(q)$ is contained in $\cM$. 
    Then 
    $$[0,1]_t\times \cM \to \widehat{V}\times \C, (t,u)\mapsto u(tp_u)$$
    is a homotopy from $\mathrm{ev}_0:\cM \to \gamma\to \widehat{V}\times \C$ to a loop in $\widehat{W}$ based at $q$. That is 
    $W\subset V\times \disk$ induces a surjection on the fundamental group.
\end{proof}

\begin{proposition}
    Under the assumption of \Cref{thm:binding}, $W\subset V\times \disk$ induces an injection on homology.
\end{proposition}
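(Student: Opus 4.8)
The plan is to construct a left inverse to $i_*\colon H_*(W)\to H_*(V\times\disk)$ out of the moduli spaces $\cN_J(\rho)$, which immediately forces $i_*$ to be injective; concretely I will show that any class $a\in H_k(W)$ with $i_*a=0$ must vanish. Since $W$ is a compact manifold with boundary, $\mathrm{int}\,W\hookrightarrow W$ is a homotopy equivalence and $\mathrm{int}\,W\subset\widehat V\times\disk^\circ$, so by \Cref{thm:Zinger} I would represent $a$ by a pseudocycle $\rho\colon M\to\mathrm{int}\,W$ coming from a triangulation, so that the genericity in \Cref{prop:pseudocycle} applies. From $i_*a=0$, together with the fact that $V\times\disk$ includes (after a slight inward shrink of the $\disk$-coordinate) into $\widehat V\times\disk^\circ$ as a homotopy equivalence, I would obtain a pseudocycle equivalence $\widetilde\rho\colon\widetilde M\to\widehat V\times\disk^\circ$ from a triangulation with $\partial\widetilde M=M$, $\widetilde\rho|_M=\rho$ and $\dim\Bd\widetilde\rho\le k-1$; its image automatically avoids $\widehat V\times\{(2,0),\infty\}$ since those points lie outside the closed unit disk.

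Next I would fix a generic $J$ as in \Cref{prop:pseudocycle}, so that $\cN_J(\rho)$ and $\cN_J(\widetilde\rho)$ are manifolds of dimensions $k$ and $k+1$ with $\partial\cN_J(\widetilde\rho)=\cN_J(\rho)$ coming from $\partial\widetilde M=M$, and with the boundary sets $\Bd$ in dimensions at most $k-2$ and $k-1$ respectively; as in \Cref{prop:proper}, the class $[\{\ast\}\times\CP^1]$ is indecomposable, so no sphere bubbling enters the compactifications. By \Cref{prop:intersection2} every $u$ appearing meets $\widehat W$ with total intersection number one, hence by positivity of intersection in a single transverse point $p_u\in\CP^1$ that varies smoothly with $(u,m)$. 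Setting
$$\mathrm{ev}^{t=1}\colon\ (u,m)\longmapsto u(p_u)\in\widehat W,$$
I would note that on $\cN_J(\widetilde\rho)$ this is a pseudocycle equivalence valued in $\widehat W$ whose honest boundary is $\mathrm{ev}^{t=1}|_{\cN_J(\rho)}$ and whose other boundary is empty, so $[\mathrm{ev}^{t=1}|_{\cN_J(\rho)}]=0$ in $H_k(\widehat W)=H_k(W)$.

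Finally I would exploit that $\rho(M)\subset\widehat W$: for each $(u,m)\in\cN_J(\rho)$ the point $z=0$ already lies in $u^{-1}(\widehat W)$, and since $u$ meets $\widehat W$ exactly once this forces $p_u=0$; hence $\mathrm{ev}^{t=1}|_{\cN_J(\rho)}=\mathrm{ev}_0|_{\cN_J(\rho)}=\rho\circ\pi$, where $\pi\colon\cN_J(\rho)\to M$, $(u,m)\mapsto m$, has degree one by \Cref{prop:proper}, and precomposing a pseudocycle with a degree-one map does not change its homology class (as in the proof of \Cref{prop:pseudocycle}). Therefore $[\mathrm{ev}^{t=1}|_{\cN_J(\rho)}]=[\rho]=a$, and comparing the two computations yields $a=0$, so $i_*$ is injective. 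The step I expect to require the most care is the pseudocycle bookkeeping for the family $\cN_J(\widetilde\rho)$: checking that $p_u$ never runs into a nodal point of a broken configuration (again using indecomposability of $[\{\ast\}\times\CP^1]$ to exclude bubbling), that $\mathrm{ev}^{t=1}$ is therefore a genuine pseudocycle with $\Bd$ in codimension at least one, and that the bounding pseudocycle $\widetilde\rho$ can really be relocated into $\widehat V\times\disk^\circ$ away from $(2,0)$ and $\infty$; these are routine given \Cref{thm:Zinger}, \Cref{prop:proper}, \Cref{prop:intersection2} and \Cref{prop:pseudocycle}, but need to be spelled out.
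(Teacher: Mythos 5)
Your proposal is correct and follows essentially the same route as the paper: represent the class by a pseudocycle $\rho$ into $W$, take a bounding pseudocycle $\widetilde\rho$ in $V\times\disk$ from $i_*a=0$, and use the unique intersection point $p_u$ of each sphere in $\cN_J(\widetilde\rho)$ with $\widehat W$ (which equals $0$ for spheres in $\cN_J(\rho)$) to push the nullbordism into $\widehat W$ via $(u,m)\mapsto u(p_u)$. The extra bookkeeping you flag (avoiding $(2,0)$ and $\infty$, excluding bubbling via indecomposability of $[\{\ast\}\times\CP^1]$, and the degree-one identification of $\mathrm{ev}_0|_{\cN_J(\rho)}$ with $\rho$) is exactly what the paper leaves implicit.
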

\begin{proof}
    Let $\rho:M\to W$ be a $k$-pseudocycle and $\widetilde{\rho}:\widetilde{M}\to V\times \disk$ is an equivalence from $\rho$ to $\emptyset$. We may assume both are from a triangulation and simplicial homology. Then $\mathrm{ev}_0:\cN_J(\widetilde{\rho})\to V\times \disk$ is an equivalence from $\mathrm{ev}_0:\cN_J(\rho)\to W$ to $\emptyset$. For curves $u\in \cN_J(\widetilde{\rho})$, there is a unique point $p_u\in \CP^1\backslash \{1,\infty\}$ such that $u(p_u)\in \widehat{W}$ by \Cref{prop:intersection2}. $p_u$ is $0$ if $u$ is from $\cN_J(\rho)$. Then $\mathrm{ev}:\cN_J(\widetilde{\rho})\to \widehat{W}, (u,\widetilde{m}) \mapsto u(p_u)$ defines an equivalence from $\rho$ to $\emptyset$ as pseudocycles in $\widehat{W}$. That is, the induced map on homology is an injection.
\end{proof}

\Cref{thm:binding} follows from the above propositions and the Whitehead theorem. 
\bibliographystyle{alpha} 
\bibliography{ref}
\Addresses
\end{document}